\newtheorem{theorem}{Theorem}[section]
\newtheorem{lemma}[theorem]{Lemma}
\newtheorem{proposition}[theorem]{Proposition}
\newtheorem{corollary}[theorem]{Corollary}
\theoremstyle{definition}
\newtheorem{definition}[theorem]{Definition}
\newtheorem{example}[theorem]{Example}
\newtheorem{question}[theorem]{Question}
\theoremstyle{remark}
\newtheorem{remark}[theorem]{Remark}
\newtheorem{conjecture}[theorem]{Conjecture}
\numberwithin{equation}{section}
\newtheorem{thmy}{Theorem}
\newcommand{\C}{\mathbb{C}}
\newcommand{\Cstar}{\mathbb{C}^{\ast}}
\newcommand{\flag}{F\ell}
\newcommand{\Xwo}[1]{X_{#1}^{\circ}}
\newcommand{\Owo}[1]{\Omega_{#1}^{\circ}}
\newcommand{\Ow}[1]{\Omega_{#1}}
\newcommand{\Owho}[1]{\Omega_{#1,h}^{\circ}}
\newcommand{\Owh}[1]{\Omega_{#1,h}}
\newcommand{\swh}[1]{\sigma_{#1,h}}
\DeclareMathOperator{\Hess}{Hess}
\DeclareMathOperator{\codim}{\codim}
\DeclareMathOperator{\des}{des}
\DeclareMathOperator{\Des}{Des}
\DeclareMathOperator{\Stab}{Stab}
\let\@@span\span
\def\sp@n{\@@span\omit\advance\@multicnt\m@ne}
\renewcommand{\span}{\text{span}}
\newcommand{\wi}[1]{w^{[#1]}}
\newcommand{\Gwh}[1]{G_{#1,h}}
\newcommand{\Ghone}{\mathcal{G}_h^1}
\newcommand{\hats}{\widehat{\sigma}}
\newcommand{\hatswh}[1]{\hats_{#1,h}}
\newcommand{\vred}{\textcolor{red}{|}}
\newcommand{\s}{\mathsf{s}}
\newcommand{\la}{\lambda}
\newlength{\celldim} \setlength{\celldim}{22\unitlength}
\newsavebox{\cell}
\sbox{\cell}{%
\begin{picture}(22,22)\linethickness{0.6pt} %
  \put(0,0){\line(1,0){22}} \put(0,0){\line(0,1){22}}
  \put(22,0){\line(0,1){22}} \put(0,22){\line(1,0){22}}
\end{picture}}
\newcommand\cellifying[1]{%
  \def\thearg{#1}\def\nothing{}%
  \ifx\thearg\nothing \vrule width0pt height\celldim depth0pt\else
  \hbox to 0pt{\usebox{\cell} \hss}\fi%
  \vbox to \celldim{ \vss \hbox to
  \celldim{\hss$#1$\hss} \vss}
}
\newcommand\ttableau[1]{\vtop{\let\\\cr
\baselineskip -16000pt \lineskiplimit 16000pt \lineskip 0pt
\ialign{&\cellifying{##}\cr#1\crcr}}}
\begin{document}
	
\title[Permutation module decomposition of $H^2(\Hess(S,h))$]{Permutation module decomposition of the second cohomology of  a regular semisimple Hessenberg variety}

\author{Soojin Cho}
\address{Department of Mathematics, Ajou University, Suwon  16499, Republic of Korea}
\email{chosj@ajou.ac.kr}
	
\author{Jaehyun Hong}
\address{Center for Complex Geometry, Institute for Basic Science (IBS), Daejeon 34126, Republic of Korea}
\email{jhhong00@ibs.re.kr}
	
\author{Eunjeong Lee}
\address{Department of Mathematics, Chungbuk National University, Cheongju 28644, Republic of Korea}
\email{eunjeong.lee@chungbuk.ac.kr}
	
\thanks{Cho was supported by NRF-2020R1A2C1A01011045.	Hong was supported by the Institute for Basic Science (IBS-R032-D1). Lee was supported by the Institute for Basic Science (IBS-R003-D1).}

\begin{abstract}
Regular semisimple Hessenberg varieties admit actions of associated Weyl groups on their cohomology spaces of each degree. In this paper, we consider the module structure of the cohomology spaces of regular semisimple Hessenberg varieties of type $A$. We define a subset of the Bia{\l}ynicki-Birula basis of the cohomology space which becomes a module generator set of the cohomology module of each degree. We use these generators to construct permutation submodules of the degree two cohomology module to form a permutation module decomposition. Our construction is consistent with a known combinatorial result by Chow on chromatic quasisymmetric functions.
\end{abstract}

\keywords{Hessenberg varieties, representations of symmetric groups, permutation module decompositions}

\subjclass[2020]{Primary 14M15; Secondary 05E14, 14L30}
\maketitle

\setcounter{tocdepth}{1}
\tableofcontents
\section{Introduction} \label{sec:intro}
Since De Mary,   Proceci, and  Shayman  introduced
Hessenberg varieties in the 1990s (\cite{DeMari_Shayman_88} and \cite{DPS_Hessenberg_var}), many researchers in various fields have increasingly focused on them.
Hessenberg varieties form a family of subvarieties of the full flag varieties and many interesting varieties appear as Hessenberg varieties. For example, full flag varieties and permutohedral varieties are Hessenberg varieties.
Flag varieties are central objects in the intersection of algebraic geometry and algebraic combinatorics.  Hessenberg varieties share similar features with flag varieties.

A \emph{regular semisimple Hessenberg variety} $\Hess(S,h)$ is a subvariety of the full flag variety~$\flag(\C^n)$ which is determined by two data: a weakly increasing function $h \colon [n] \to [n]$, called a \emph{Hessenberg function},  and a regular semisimple linear operator $S$ (see Definition~\ref{def_Hess_var} for a precise definition). Here, we use $[n]$ to denote the set $\{1, \dots, n\}$. Tymoczko defined an action of the symmetric group~$\mathfrak{S}_n$ on the cohomology space $H^{2k}(\Hess(S,h);\C)$ in~\cite{T2, T1}, which is called the \emph{dot action}.\footnote{In what follows, we consider cohomology rings with coefficients in $\C$ and we will not indicate the coefficient ring that we are working on.}
On the other hand, a Hessenberg function $h$ determines a graph~$G_h$, called
the \emph{incomparability graph}, of the corresponding unit interval order.

Shareshian and Wachs~\cite{SW} refined a long standing conjecture proposed by Stanley and Stembridge~\cite{SS, S1} on chromatic symmetric functions as a conjecture on chromatic quasisymmetric functions.

\begin{conjecture}[\cite{SS, S1}, \cite{SW}]\label{conj:e-positivity}
Let $h \colon [n] \to [n]$ be a Hessenberg function and let $\Hess(S, h)$ be the regular semisimple Hessenberg variety associated with $h$ and a regular semisimple linear operator $S$. Let $X_{G_h}(\mathbf{x}, t)$ be the chromatic quasisymmetric function of the graph $G_h$ associated with $h$.
Then, for $0 \leq k \leq N_h \coloneq \sum_{i=1}^n (h(i)-i)$,
  the coefficient of $t^k$ of 
   $X_{G_h}({\bf x},t)$ is positively expanded as a sum of elementary symmetric functions.
   \end{conjecture}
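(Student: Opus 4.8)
The plan is to convert the stated $e$-positivity into a statement about the $\mathfrak{S}_n$-module structure of the cohomology under the dot action, and then to exhibit that structure explicitly. By the dictionary of Shareshian--Wachs (established by Brosnan--Chow and independently by Guay-Paquet), the chromatic quasisymmetric function records the graded Frobenius characteristic of the dot action:
\begin{equation*}
\omega X_{G_h}(\mathbf{x}, t) = \sum_{k=0}^{N_h} \operatorname{ch}\!\left( H^{2k}(\Hess(S,h)) \right) t^k,
\end{equation*}
where $\omega$ is the standard involution on symmetric functions. Since $\operatorname{ch}(M^\lambda) = h_\lambda$ for the permutation module $M^\lambda = \operatorname{Ind}_{\mathfrak{S}_\lambda}^{\mathfrak{S}_n} \mathbf{1}$ and $\omega(h_\lambda) = e_\lambda$, the coefficient of $t^k$ in $X_{G_h}(\mathbf{x}, t)$ is $e$-positive if and only if $H^{2k}(\Hess(S,h))$, with its dot action, is isomorphic to a direct sum of permutation modules $M^\lambda$. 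I would therefore aim to produce, for each $k$, an explicit decomposition of $H^{2k}(\Hess(S,h))$ into such $M^\lambda$.

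First I would fix the Bia{\l}ynicki--Birula basis of $H^{2k}(\Hess(S,h))$ arising from a generic one-parameter subgroup; its elements are indexed by those permutations $w \in \mathfrak{S}_n$ whose cell has complex dimension $k$. Inside this basis I would isolate a distinguished generating subset, and for each generator $b$ analyze its dot-action orbit: the Young subgroup $\mathfrak{S}_\lambda \le \mathfrak{S}_n$ stabilizing $b$ should be readable from the combinatorics of $w$ relative to the graph $G_h$, so that the submodule generated by $b$ is isomorphic to $M^\lambda$.

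To assemble these pieces into all of $H^{2k}$ I would prove that the resulting permutation submodules are linearly independent --- so that their sum is direct --- and that their total dimension $\sum_\lambda c_{\lambda,k}\,[\mathfrak{S}_n : \mathfrak{S}_\lambda]$ equals $\dim H^{2k}$, the number of permutations whose cell has complex dimension $k$. The cleanest route is a combinatorial bijection carrying each basis element to a unique coset inside one of the chosen permutation modules; such a bijection delivers spanning and independence at once.

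The main obstacle is uniformity in $k$. When $k = 1$, that is for $H^2(\Hess(S,h))$, the organizing data are precisely the edges of $G_h$, the stabilizers $\mathfrak{S}_\lambda$ and the required bijection admit a transparent combinatorial description, and one recovers Chow's formula; this is the case the present paper settles. For general $k$ it is far from clear that the submodules generated by the chosen higher Bia{\l}ynicki--Birula classes are genuine permutation modules rather than merely character-equivalent ones, and establishing the isomorphism type together with the spanning count is exactly what the open Stanley--Stembridge conjecture demands. I expect the higher-degree bookkeeping --- tracking how the dot action permutes the chosen classes and ruling out non-permutation summands --- to be the crux.
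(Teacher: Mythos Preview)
The statement you are addressing is a \emph{conjecture}; the paper does not prove it in general, and neither does your proposal. Your outline is not a proof but a research strategy --- essentially the same strategy the paper itself advocates as Conjecture~\ref{conj:permutation module decomposition_intro}: find, for each $w$ in a distinguished generating set $\mathcal{G}_h^k$ of Bia\l ynicki--Birula classes, an element of the cyclic module $M(\sigma_{w,h})$ that generates a genuine permutation module, and show these pieces direct-sum to $H^{2k}$. You correctly identify that only the case $k=1$ is settled here, and that the higher-degree case is exactly the open Stanley--Stembridge problem.

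There is, however, a concrete gap in your sketch that the paper flags explicitly. You write that for a generator $b$ the stabilizer $\mathfrak{S}_\lambda$ ``should be readable from the combinatorics of $w$'' and that ``the submodule generated by $b$ is isomorphic to $M^\lambda$''. The paper shows by example (see Question~\ref{question} and the example following it, with $h=(2,4,4,4)$ and $w=3412$) that this inference fails: the stabilizer of $\sigma_{w,h}$ is $\mathfrak{S}_{(2,2)}$, yet the cyclic module it generates is $M^{(3,1)}$, not $M^{(2,2)}$. So knowing the stabilizer of a BB class does not determine the isomorphism type of the cyclic module it generates, and the naive cyclic modules need not be permutation modules of the expected shape. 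The paper's remedy --- already nontrivial for $k=1$ --- is to replace $\sigma_{w,h}$ by a modified class $\widehat{\sigma}_{w,h}\in M(\sigma_{w,h})$ (Definition~\ref{def_hat_wi}) and then to run a delicate inductive argument over the index set $T_h$ (proof of Theorem~\ref{thm_H2}) to show the resulting modules are permutation modules that direct-sum to $H^2$. Your proposal omits this modification step and the attendant independence/spanning argument, which is precisely where the difficulty lies.
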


Furthermore, they proposed another conjecture in the same paper~\cite{SW} that Conjecture~\ref{conj:e-positivity} is equivalent to a conjecture regarding the $\mathfrak S_n$-module structure of $H^*(\Hess(S,h))$. 
The latter conjecture has been proved by Brosnan and Chow~\cite{BrosnanChow18}, and by Guay-Paquet~\cite{G-P2}.

\begin{theorem}[Conjectured in \cite{SW}; proved in \cite{BrosnanChow18}, \cite{G-P2}]\label{thm:Brosnan-Chow}
Let $h \colon [n] \to [n]$ be a Hessenberg function and let $\Hess(S, h)$ be the regular semisimple Hessenberg variety associated with $h$ and a regular semisimple linear operator $S$. Let $X_{G_h}(\mathbf{x}, t)$ be the chromatic quasisymmetric function of the graph $G_h$ associated with $h$. Then, we have
\[
\sum_{k=0}^{N_h} \mathrm{ch} H^{2k}(\Hess(S, h))\,t^k=\omega X_{G_h}(\mathbf x, t)\,,\]
where $\mathrm{ch}$ is the Frobenius characteristic map and $\omega$ is the involution on the symmetric function algebra sending elementary symmetric functions to complete homogeneous symmetric functions.
\end{theorem}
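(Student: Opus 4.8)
The plan is to show that both sides, regarded as elements of the ring of symmetric functions with coefficients in $\Z[t]$, satisfy a common system of linear relations together with matching initial data, and are therefore equal. Writing $A_h(\mathbf{x},t) \coloneq \sum_{k=0}^{N_h} \mathrm{ch}\, H^{2k}(\Hess(S,h))\, t^k$, I would organize the Hessenberg functions (equivalently, the incomparability graphs $G_h$) under local modifications and prove that $A_h$ obeys the same relations---a three-term modular law under adding or deleting a single comparability, multiplicativity under disjoint unions, and prescribed values on paths and complete graphs---that $\omega X_{G_h}(\mathbf{x},t)$ is known to satisfy. Since these relations determine the family uniquely, matching them forces $A_h = \omega X_{G_h}$ for every $h$.

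For the initial data I would treat the two extreme Hessenberg functions first. When $h \equiv n$ the Hessenberg condition is vacuous, so $\Hess(S,h) = \flag(\C^n)$ and $A_h$ is the graded dot-characteristic of $H^*(\flag(\C^n))$, to be compared with $\omega X_{K_n} = [n]_t!\, h_n$, the $t$-factorial times the trivial character. When $h(i)=i$ for all $i$, regular semisimplicity of $S$ forces $\Hess(S,h)$ to consist of the $n!$ flags assembled from the eigenlines of $S$, on which the dot action is the regular representation, so $A_h = h_1^n = \mathrm{ch}\,\C[\mathfrak{S}_n]$, matching $\omega X_{G_h}$ for the edgeless graph. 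A disjoint union $G_h = G_{h'} \sqcup G_{h''}$ corresponds to a block decomposition of $S$ and to a product decomposition $\Hess(S,h) \cong \Hess(S',h') \times \Hess(S'',h'')$; a K\"unneth argument, compatible with the induction product of symmetric-group representations, then yields the multiplicativity $A_h = A_{h'}\cdot A_{h''}$.

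The crux is the modular recursion, and here I would pass to geometry. I would realize $\Hess(S,h)$ as a fiber of the universal family $\{(X,gB) : \mathrm{Ad}(g^{-1})X \in H(h)\} \to \mathfrak{gl}_n$ over the regular semisimple locus, so that the dot action is identified with the monodromy of this family. Given three Hessenberg functions $h^-, h, h^+$ differing in a single box, the corresponding total spaces are related by a $\mathbb{P}^1$-bundle or blow-up type morphism; applying the decomposition theorem, together with a local invariant cycle argument transporting the monodromy-equivariant cohomology of the regular semisimple fiber to the more accessible regular nilpotent fiber, I expect to extract exactly the three-term relation matching the combinatorial modular law. Translating the resulting identity into symmetric functions, via Tymoczko's GKM presentation or the identification of the relevant local systems with LLT data, then completes the comparison with $\omega X_{G_h}$.

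The main obstacle is precisely this geometric recursion. The difficulty is that the dot action is \emph{not} induced by an algebraic action of $\mathfrak{S}_n$ on $\Hess(S,h)$ but arises from monodromy, so one cannot naively apply an equivariant Mayer--Vietoris sequence or a fibration spectral sequence to compare the three varieties; controlling the $\mathfrak{S}_n$-module structure across a single local modification requires either delicate GKM bookkeeping of the edge labels or the full weight-filtration and purity input from the theory of mixed Hodge modules. Establishing that the monodromy representation coincides with Tymoczko's dot action, and that the local invariant cycle map is surjective with the expected weights, is the technical heart on which the whole argument rests.
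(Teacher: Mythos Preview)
The paper does not contain a proof of this theorem: it is stated as a known result, attributed to Brosnan--Chow and Guay-Paquet, and used as background for the paper's actual contributions (which concern module generators and the degree-two decomposition). There is therefore no ``paper's own proof'' against which to compare your proposal.

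That said, your sketch is a reasonable outline of the two existing arguments, though it conflates them somewhat. The modular-law reduction (a three-term relation plus multiplicativity determining the family) is the combinatorial backbone of Guay-Paquet's Hopf-algebra proof and of later simplifications; the monodromy identification, the family over the regular locus, the decomposition theorem, and the local invariant cycle surjectivity are the geometric ingredients of Brosnan--Chow. Either route is viable, but each is a substantial paper in its own right, and you have correctly identified the hard core: showing that the monodromy action coincides with Tymoczko's dot action and that the invariant cycle map behaves as needed. One minor slip: the case $h(i)=i$ for all $i$ is excluded by the standing hypothesis $h(i)\geq i+1$ in this paper (and in any case is degenerate, since the variety is then zero-dimensional), so your ``initial data'' should instead be the staircase $h(i)=i+1$, where $\Hess(S,h)$ is a toric variety and the comparison with $\omega X_{\text{path}}$ is still tractable.
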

%


To a partition $\la=(\la_1, \dots, \la_{\ell})$ of $n$ we associate a module $M^\la$  of $\mathfrak{S}_n$, called a {\it permutation module of type} $\lambda$, which is defined by  the vector space of formal linear sums of the ordered tuples  $(J_1, \dots, J_{\ell})$ of disjoint subsets of $\{1, 2, \dots, n\}$ satisfying $\lvert J_s \rvert=\la_s$, $s=1, \dots, \ell$, and $\lvert \bigcup_s J_s \rvert =n$, where the permutations in $\mathfrak{S}_n$ act on $(J_1, \dots, J_{\ell})$ naturally.
Under the Frobenius characteristic map, $M^\la$ corresponds to the complete homogeneous symmetric function $h_\la$.  
In Theorem~\ref{thm:Brosnan-Chow} and throughout the paper, by a  {\it permutation module} we mean   a permutation module $M^\la$    of type $\lambda$ for some partition $\lambda$ of $n$.\footnote{In the literature, a permutation module or a permutation representation of the symmetric group $\mathfrak{S}_n$ is a representation of $\mathfrak S_n$ with a basis permuted by the action of $\mathfrak S_n$; or more restrictively,  a representation of $\mathfrak{S}_n$, in which every point stabilizer is parabolic, in addition. The condition that we impose in this paper is the most restrictive one.}

%


Theorem~\ref{thm:Brosnan-Chow} inspired efforts to understand combinatorial works on chromatic quasisymmetric functions in the language of Hessenberg varieties, and to prove Conjecture~\ref{conj:e-positivity} using geometric methods. One of them is to construct nice bases $\{\swh{w}\}_{w\in\mathfrak{S}_n}$ of $H^*(\Hess(S,h))$ using Bia{\l}ynicki-Birula cell decompositions of the Hessenberg varieties, called \emph{Bia{\l}ynicki-Birula bases} (\emph{BB bases}) (see Definition \ref{def_swh_classes}), and to investigate their properties; see~\cite{CHL}.

Our work in this paper provides a general method of using the BB bases to construct a permutation module decomposition of  $H^*(\Hess(S,h))$. More specifically, we define a subset~$\mathcal G_h^k$ of $\frak S_n$ for a Hessenberg function  $h\colon [n] \rightarrow [n]$;
\[\mathcal G_h^k\colonequals  \{ w \in \mathfrak{S}_n \mid \ell_h(w)=k,\,\, w^{-1}(w(j)+1) \leq h(j) \,\, \text{ for } w(j) \in [n-1] \}\,.\]
We then prove that the corresponding BB basis elements form a module generator set of the cohomology of degree $2k$. We use related results in both combinatorics of chromatic quasisymmetric functions and geometry of Hessenberg varieties.

\begin{thmy}[Theorem~\ref{thm:module generator}]\label{thmx:A}
The set $\{\sigma_{w,h} \mid w \in \mathcal G_h^k\}$ has the cardinality $\dim_{\C} H^{2k}(\Hess(S,h))^{\mathfrak S_n}$ and it generates the  $\mathfrak S_n$-module  $H^{2k}(\Hess(S,h))$, that is, 
\[
H^{2k}(\Hess(S,h)) = \sum_{w \in \mathcal G_h^k} M(\sigma_{w,h}),
\] 
where $M(\sigma_{w,h})$ denotes the $\mathfrak S_n$-module generated by $\sigma_{w,h}$.
\end{thmy}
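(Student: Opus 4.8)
The statement combines a counting identity, $|\mathcal{G}_h^k| = \dim_{\C} H^{2k}(\Hess(S,h))^{\mathfrak S_n}$, with a spanning assertion, that the $\mathfrak S_n$-translates of $\{\sigma_{w,h} \mid w \in \mathcal G_h^k\}$ exhaust $H^{2k}(\Hess(S,h))$. The plan is to prove the spanning assertion first, by a triangularity argument for the dot action on the BB basis, and then to pin down the cardinality by a symmetric-function computation built on Theorem~\ref{thm:Brosnan-Chow}.

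For the spanning assertion, write $\mathcal M := \sum_{w \in \mathcal G_h^k} M(\sigma_{w,h})$; since the dot action preserves cohomological degree, $\mathcal M \subseteq H^{2k}(\Hess(S,h))$ and it suffices to show $\sigma_{u,h} \in \mathcal M$ for every $u$ with $\ell_h(u)=k$. The tool is the dot action of a simple transposition $\s_i = (i,\,i+1)$ on the BB basis, for which I would use (or establish, following \cite{CHL}) an expansion of the shape $\s_i \cdot \sigma_{w,h} = c\,\sigma_{w',h} + \sum_{w'' \prec w'} c_{w''}\,\sigma_{w'',h}$ with $c \ne 0$, where $w'$ is a permutation obtained from $w$ by exchanging the values $i$ and $i+1$ and $\prec$ is a secondary partial order on $\{w \mid \ell_h(w)=k\}$; note that all terms lie in the same degree, so the triangularity is with respect to $\prec$ rather than to $\ell_h$. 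I would then design $\prec$ so that $\mathcal G_h^k$ consists of its minimal elements and so that the defining inequalities $w^{-1}(w(j)+1) \le h(j)$ measure distance to this minimal set: for each $u \notin \mathcal G_h^k$, a violated inequality at some value $i$ should provide a transposition $\s_i$ and a strictly $\prec$-smaller permutation $u'$ with $\s_i \cdot \sigma_{u',h} = c\,\sigma_{u,h} + \sum_{w \prec u} c_w\,\sigma_{w,h}$. Induction on $\prec$, together with the fact that $\mathcal M$ is a submodule, then yields $\sigma_{u,h} \in \mathcal M$ for all such $u$, hence $\mathcal M = H^{2k}(\Hess(S,h))$.

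For the cardinality, since the dot action is degree-preserving, $\dim_{\C} H^{2k}(\Hess(S,h))^{\mathfrak S_n}$ is the multiplicity of the trivial module in $H^{2k}(\Hess(S,h))$, namely $\langle \mathrm{ch}\, H^{2k}(\Hess(S,h)), h_n \rangle$. By Theorem~\ref{thm:Brosnan-Chow} this equals the coefficient of $t^k$ in $\langle \omega X_{G_h}(\mathbf{x},t), h_n\rangle = \langle X_{G_h}(\mathbf{x},t), e_n\rangle$, using that $\omega$ is an isometric involution with $\omega h_n = e_n$. The combinatorial result of Chow quoted in the introduction expresses this quantity as an explicit count of permutations weighted by a $G_h$-ascent statistic, and it remains to exhibit a bijection between that set and $\mathcal G_h^k$ identifying the ascent statistic with $\ell_h$; the conditions cutting out $\mathcal G_h^k$ are designed precisely to match Chow's count. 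As a consistency check, each cyclic module $M(\sigma_{w,h})$ has at most one-dimensional invariants, so the spanning assertion already forces $\dim_{\C} H^{2k}(\Hess(S,h))^{\mathfrak S_n} \le |\mathcal G_h^k|$, and the combinatorial identity supplies the reverse inequality, showing in addition that the generating set is irredundant at the level of invariants.

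The main obstacle is the triangularity input for the dot action: one must produce a formula for $\s_i \cdot \sigma_{w,h}$ explicit enough to control the error terms and to guarantee that the leading coefficient $c$ is nonzero, and one must choose the secondary order $\prec$ so that every non-generator is reachable by a single climbing step from a strictly smaller element. Verifying that the violated inequality always supplies a valid transposition, and that the resulting correction terms remain $\prec$-below the target, is the delicate part; the cardinality identity, by contrast, should follow more routinely once Chow's count is matched with $\mathcal G_h^k$ by a direct bijection.
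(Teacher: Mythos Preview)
Your spanning argument points in the right direction but is substantially overcomplicated, because you miss the structural fact that drives the paper's proof. When $u \notin \mathcal G_h^k$, the violated inequality $u^{-1}(u(j)+1) > h(j)$ says precisely that the positions $j$ and $u^{-1}(u(j)+1)$ are \emph{not} adjacent in the incomparability graph $G_h$; setting $i = u(j)$, this means the GKM edge between $u$ and $s_i u$ is absent, i.e.\ $s_i u \dashrightarrow u$ in the paper's notation. Proposition~\ref{prop:simple action}(1) then gives $s_i \cdot \sigma_{s_i u,h} = \sigma_{u,h}$ \emph{exactly}: leading coefficient $1$ and no error terms. So there is no triangularity to establish, no secondary order $\prec$ to engineer, and the ``delicate part'' you anticipate simply does not arise. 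The paper packages this via the graph-type equivalence of Definition~\ref{def_graph_Ghw}: each class contains a unique Bruhat-maximal element, that element lies in $\mathcal G_h$, and every $\sigma_{u,h}$ in the class is obtained from it by a product of simple reflections acting as pure permutations of BB classes (Proposition~\ref{prop:type and orbit pre}). Your plan would still work if executed, but you would discover along the way that all the anticipated correction terms vanish.

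Your cardinality argument is reasonable in outline but mis-cites its input: the result of Chow quoted in this paper (Theorem~\ref{thm:Chow}) concerns only $H^2$ and gives an $e$-expansion, not a permutation count valid for general $k$. The symmetric-function identity you need for $\langle X_{G_h}, e_n\rangle$ is the Shareshian--Wachs acyclic-orientation theorem~\cite[Theorem~5.3]{SW}, which the paper invokes in Proposition~\ref{prop:generator and acyclic orientation}; one then still owes a bijection between $\mathcal G_h^k$ and acyclic orientations with $k$ descending edges, and the paper supplies that via the graph-type map in Proposition~\ref{prop:type and orbit pre}. The paper's own route to the cardinality, however, is different again: it uses the Brosnan--Chow identity $\dim_{\C} H^{2k}(\Hess(S,h))^{\mathfrak S_n} = \dim_{\C} H^{2k}(\Hess(N,h))$ for $N$ regular nilpotent and matches the known cell count of $\Hess(N,h)$ with $\mathcal G_h^k$ through the explicit involution $\iota(w)(i) = n - w(i) + 1$ (Proposition~\ref{prop:number of generators}). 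Your closing consistency check is valid and does yield $\dim_{\C} H^{2k}(\Hess(S,h))^{\mathfrak S_n} \le |\mathcal G_h^k|$ once spanning is established.
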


We remark that the sum in Theorem~\ref{thmx:A} is not a direct sum, the question is whether we can reduce modules~$M(\swh{w})$ to get a direct sum decomposition of permutation modules as stated in the following conjecture.

\begin{conjecture}\label{conj:permutation module decomposition_intro}
For each $w \in \mathcal G_h^k$, there is $\widehat{\sigma}_{w,h} \in M(\sigma_{w,h})$ such that the $\mathfrak S_n$-module $M(\widehat{\sigma}_{w,h})$ generated by $\widehat{\sigma}_{w,h}$ is a permutation module and
\[
H^{2k}(\Hess(S,h)) = \bigoplus_{w \in \mathcal G_h^k} M(\widehat{\sigma}_{w,h}). 
\]
Here, for a precise description of a candidate of $\widehat{\sigma}_{w,h}$, see Definition~\ref{def_widehat_swh}.
\end{conjecture}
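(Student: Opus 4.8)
The plan is to construct, for every $w \in \mathcal G_h^k$, the candidate $\widehat{\sigma}_{w,h}$ of Definition~\ref{def_widehat_swh} as an explicit element of $M(\sigma_{w,h})$, and then to verify three things in turn: that each $M(\widehat{\sigma}_{w,h})$ is a genuine permutation module $M^{\lambda(w)}$ for a partition $\lambda(w)$ read off from $w$ and $h$; that the sum $\sum_{w}M(\widehat{\sigma}_{w,h})$ is direct; and that the resulting decomposition exhausts $H^{2k}(\Hess(S,h))$. The backbone of the argument is Theorem~\ref{thm:module generator}, which already tells us that the $\sigma_{w,h}$ with $w \in \mathcal G_h^k$ generate the module and that there are exactly $\dim_{\C}H^{2k}(\Hess(S,h))^{\mathfrak S_n}$ of them. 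Since a transitive permutation module has a one-dimensional space of invariants, the number of summands demanded by the conjecture is forced to equal $|\mathcal G_h^k|$, matching the generator count; so the whole problem is to upgrade the generating set of Theorem~\ref{thm:module generator} to a basis-compatible family of permutation submodules.

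The first and central step is to show $M(\widehat{\sigma}_{w,h}) \cong M^{\lambda(w)}$. I would compute the dot action explicitly on the BB basis in the relevant degree, expressing $s_i \cdot \sigma_{v,h}$ as an integral combination of BB classes (using the formulas of \cite{CHL} and the earlier sections), and choose $\widehat{\sigma}_{w,h}$ so that its $\mathfrak S_n$-orbit is literally permuted: one wants $s_i \cdot \widehat{\sigma}_{w,h} = \widehat{\sigma}_{w,h}$ for $i$ in a prescribed set $J(w)$ and $s_i \cdot \widehat{\sigma}_{w,h}$ equal to another orbit vector otherwise. Identifying the stabilizer $\Stab(\widehat{\sigma}_{w,h})$ with the Young subgroup $\mathfrak S_{\lambda(w)}$ generated by $\{s_i : i \in J(w)\}$ then gives $M(\widehat{\sigma}_{w,h}) \cong \C[\mathfrak S_n/\mathfrak S_{\lambda(w)}] = M^{\lambda(w)}$, provided the orbit is linearly independent. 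The type $\lambda(w)$ should be derivable directly from the defining inequalities of $\mathcal G_h^k$ together with the structure of the graph $G_h$.

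For directness and exhaustion I would argue by a dimension count made effective through triangularity. Ordering the BB basis by a refinement of the Hessenberg length $\ell_h$ and the Bruhat order, the orbit vectors $g \cdot \widehat{\sigma}_{w,h}$ should expand unitriangularly against the BB classes, so that the union $\bigcup_{w} \{g\cdot\widehat{\sigma}_{w,h} : g \in \mathfrak S_n\}$ is linearly independent; combined with the spanning statement of Theorem~\ref{thm:module generator}, this forces $H^{2k}(\Hess(S,h)) = \bigoplus_{w} M(\widehat{\sigma}_{w,h})$. As an independent consistency check, and as a source for the partitions $\lambda(w)$, I would compare $\sum_{w\in\mathcal G_h^k} h_{\lambda(w)}$ with the coefficient of $t^k$ in $\omega X_{G_h}(\mathbf x,t)$ via Theorem~\ref{thm:Brosnan-Chow} and Chow's combinatorial $h$-expansion.

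The main obstacle is the parabolic-stabilizer property for general $k$: showing that $\Stab(\widehat{\sigma}_{w,h})$ is a Young subgroup, rather than merely some subgroup with a one-dimensional invariant space, is exactly what makes the decomposition one of \emph{permutation} modules, and a uniform proof of this would yield the $h$-positivity of $[t^k]\omega X_{G_h}$, hence the Stanley--Stembridge conjecture. I therefore expect the program to be completed unconditionally only in degree $k=1$, the case $H^2$ treated in this paper, where the dot action is low-dimensional and fully explicit so that the orbits and their stabilizers can be computed by hand; the passage to higher $k$ is where the difficulty genuinely concentrates.
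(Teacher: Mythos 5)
Your proposal is correctly scoped: the statement is a conjecture, the paper itself proves it only for $k=1$ (Theorem~\ref{thm_H2}), and a uniform proof for all $k$ would settle the Stanley--Stembridge conjecture, exactly as you note at the end. The issue is that even for $k=1$ your proposed mechanism contains steps that the paper shows cannot be carried out as stated. First, the unitriangularity claim: you want the orbit vectors $g\cdot\widehat{\sigma}_{w,h}$ to expand unitriangularly against the BB basis so that their union is automatically linearly independent. But the dot action of a simple reflection on a BB class is not triangular in any useful ordering --- Proposition~\ref{prop:simple action}(3) introduces correction terms $\tau_u$, $\tau_{s_iu}$ --- and the paper's example following Question~\ref{question} (with $h=(2,4,4,4)$, $w=3412$) exhibits six distinct orbit vectors of a single BB class spanning only a four-dimensional space, with explicit linear relations. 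So linear independence of orbits is genuinely delicate and cannot come from triangularity. Second, your plan to compute $\Stab(\widehat{\sigma}_{w,h})$ fully by hand and identify it with a Young subgroup hits a known obstruction even in degree two: Proposition~\ref{prop:stabilizer_12}~1-(b) leaves only a containment $\mathfrak S_{(i,n-i)}\leq\Stab(\swh{\wi{i}})<\mathfrak S_{n}$ in the case $2i=n$, $T\neq[n-1]$, and that case is settled in the paper only a posteriori (Corollary~\ref{cor_stab_nn}, Remark~\ref{rmk_question}).

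The paper's proof of the $k=1$ case inverts your logic, and this inversion is precisely what closes both gaps. It first proves exhaustion --- every BB class $\swh{u}$ with $\ell_h(u)=1$ lies in $M=\sum_{i}M_{i,h}$ --- by an explicit case analysis and induction over the set $T$ (Lemmas~\ref{lemma_main_1}, \ref{lemma_main_2}, \ref{lemma_main_3} and the proof of Theorem~\ref{thm_H2}). It then combines the orbit--stabilizer inequality
\[
\dim_{\C} M_{i,h}\;\leq\;\frac{\lvert\mathfrak S_n\rvert}{\lvert\Stab(\hatswh{i})\rvert}\;\leq\;\dim_{\C} M^{\alpha^i},
\]
which needs only the containments of Propositions~\ref{prop:stabilizer_12} and~\ref{prop:stabilizer_34}, with the dimension identity $\sum_{i}\dim_{\C} M^{\alpha^i}=\dim_{\C} H^2(\Hess(S,h))$ of Proposition~\ref{prop_dimension}. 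Spanning plus this count forces equality everywhere at once, so directness, the isomorphisms $M_{i,h}\cong M^{\alpha^i}$, and the exact stabilizers all come out as consequences rather than inputs. If you replace your triangularity/linear-independence step and your direct stabilizer identification by this spanning-plus-counting scheme, your program matches the paper's proof for $k=1$; as written, those two steps are genuine gaps.
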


In \cite{CHL}, Conjecture~\ref{conj:permutation module decomposition_intro} was proved for permutohedral varieties, which, in turn, provided a geometric proof of Conjecture~\ref{conj:e-positivity}. In this paper, we use generators $\sigma_{w,h}$, $w \in \mathcal G_h^1$, to construct mutually disjoint permutation submodules of $H^{2}(\Hess(S,h))$ that constitute the entire module $H^{2}(\Hess(S,h))$; that is, we prove Conjecture~\ref{conj:permutation module decomposition_intro} when $k=1$.

\begin{thmy}[Theorem~\ref{thm_H2}]
For each $w \in \mathcal G_h^1$, there is $\widehat{\sigma}_{w,h} \in M(\sigma_{w,h})$ such that the $\mathfrak S_n$-module~$M(\widehat{\sigma}_{w,h})$ generated by $\widehat{\sigma}_{w,h}$ is a permutation module and
\[
H^{2}(\Hess(S,h)) = \bigoplus_{w \in \mathcal G_h^1} M(\widehat{\sigma}_{w,h}). 
\]
\end{thmy}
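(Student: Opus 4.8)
The plan is to start from the generation statement of Theorem~\ref{thm:module generator}, which tells us that $\{\swh{w} \mid w \in \Ghone\}$ spans $H^2(\Hess(S,h))$ as an $\mathfrak S_n$-module and that $|\Ghone| = \dim_\C H^2(\Hess(S,h))^{\mathfrak S_n}$. The latter equality is decisive: a permutation module $M^\lambda$ has a one-dimensional space of $\mathfrak S_n$-invariants (by Frobenius reciprocity), so any direct sum of permutation modules has invariant space of dimension equal to the number of summands. Hence the desired decomposition must have exactly $|\Ghone|$ summands, one attached to each generator $w \in \Ghone$. First I would describe $\Ghone$ explicitly, identifying which permutations of Hessenberg length one satisfy the defining inequality, and read off, for each such $w$, the partition $\lambda(w)$ of $n$ for which $M(\hatswh{w})$ is expected to be isomorphic to $M^{\lambda(w)}$; concretely $\lambda(w)$ is recorded by the Young subgroup that ought to stabilize the corrected class $\hatswh{w}$ of Definition~\ref{def_widehat_swh}.

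Next I would verify, for a fixed $w \in \Ghone$, that $M(\hatswh{w})$ is genuinely a permutation module of type $\lambda(w)$. Using the explicit formula for the dot action on degree-two BB classes, I would compute the orbit $\{g\cdot \hatswh{w}\}$ and show two things: the stabilizer of $\hatswh{w}$ is exactly the Young subgroup $\mathfrak S_{\lambda(w)}$, and the distinct images $g\cdot\hatswh{w}$ are linearly independent. The independence I would obtain from a leading-term argument: each image should have a well-defined leading term in the BB basis, and within one orbit these leading terms are pairwise distinct. This yields $M(\hatswh{w}) \cong M^{\lambda(w)}$, a permutation module whose permutation basis is the orbit of $\hatswh{w}$, and in particular $\dim M(\hatswh{w}) = \dim M^{\lambda(w)}$.

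The remaining step is to assemble the pieces into a direct sum filling all of $H^2(\Hess(S,h))$. Since each $\hatswh{w}$ lies in $M(\swh{w})$ by construction, every $M(\hatswh{w})$ is a submodule of $H^2(\Hess(S,h))$, so $\sum_{w\in\Ghone} M(\hatswh{w}) \subseteq H^2(\Hess(S,h))$. I would prove that the combined collection $\bigcup_{w\in\Ghone}\{g\cdot\hatswh{w}\}$ of all orbit elements is linearly independent by extending the leading-term analysis of the previous step to a global unitriangularity statement: after fixing a suitable order on $\mathfrak S_n$, every orbit element across every generator has a distinct leading BB class, so the whole family is unitriangular with respect to the BB basis of $H^2$. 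Independence then shows that $\sum_{w} M(\hatswh{w})$ is a direct sum, of dimension $\sum_{w}\dim M^{\lambda(w)}$. Finally, I would match this total against $\dim H^2(\Hess(S,h))$ using Theorem~\ref{thm:Brosnan-Chow} together with Chow's expansion of the coefficient of $t$ in $X_{G_h}(\mathbf x,t)$ as a nonnegative combination of the $h_\lambda$; the combinatorics is arranged so that the number of $w\in\Ghone$ with $\lambda(w)=\lambda$ equals the multiplicity of $h_\lambda$, whence $\sum_w \dim M^{\lambda(w)} = \dim H^2(\Hess(S,h))$. Directness together with this dimension equality forces $\bigoplus_{w\in\Ghone} M(\hatswh{w}) = H^2(\Hess(S,h))$.

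The main obstacle I anticipate is the global linear independence in the third step. The sum in Theorem~\ref{thm:module generator} is genuinely not direct, so the naive orbits of the unmodified classes $\swh{w}$ overlap; the correction $\hatswh{w}$ of Definition~\ref{def_widehat_swh} is designed precisely to remove this overlap, and the delicate point is to show that these corrections simultaneously remain inside $M(\swh{w})$, carry a clean and predictable leading BB term, and produce orbits whose leading terms are all distinct across the different generators. Controlling the dot action on degree-two BB classes modulo lower-order terms finely enough to guarantee this one global unitriangular pattern, rather than merely orbit-by-orbit independence, is where the real work lies, and it is what pins the construction to Chow's combinatorial count.
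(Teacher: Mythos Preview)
Your plan and the paper's diverge at the crucial third step. You propose to prove \emph{independence} of the orbit elements via a global unitriangularity against the BB basis; the paper instead proves \emph{spanning}, showing that every BB class $\swh{u}$ with $\ell_h(u)=1$ already lies in $M \colonequals \sum_i M_{i,h}$, and then closes with a dimension squeeze
\[
\dim_{\C} M \;\le\; \sum_i \dim_{\C} M_{i,h} \;\le\; \sum_i \dim_{\C} M^{\alpha^i} \;=\; \dim_{\C} H^2(\Hess(S,h)),
\]
where the middle inequality is orbit--stabilizer together with Propositions~\ref{prop:stabilizer_12} and~\ref{prop:stabilizer_34}, and the last equality is Proposition~\ref{prop_dimension}, a direct combinatorial count of the partition $\{P_i\}$. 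Once $M = H^2$ is established, all inequalities collapse: this yields directness, the exact stabilizers, and the isomorphism types simultaneously. The spanning is proved by an induction along $T = \{t_1 < \dots < t_s\}$, using Lemmas~\ref{lemma_main_1}--\ref{lemma_main_3} to show each $\swh{u}$ with $u \in P_i$ lies in $M$ modulo classes already placed in $M$.

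Your unitriangularity route has a real gap. For $i \in T$ the class $\hatswh{i} = \sum_{v \in \mathfrak{S}_n} v \cdot \swh{\wi{i}}$ is a large sum with no evident leading BB term; its BB-expansion involves all of $P_i$ together with many classes from $P_\ell$ for $\ell \notin T$ or $\ell \in T$ with $\ell < i$ (cf.\ Lemma~\ref{lemma_main_2}), and the supports of $\hatswh{i}$ for different $i \in T$ overlap heavily. Even for $i \notin T$, applying $s_i$ to $\swh{s_i}$ already scatters across several $P_\ell$ (Corollary~\ref{cor_si_action_on_sigma_si}, Example~\ref{example_235666_Ai}), so general orbit elements are far from single BB classes. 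There is no evident order on $\{u : \ell_h(u)=1\}$ making the whole family triangular; pinning this down would require essentially the same inductive modular analysis the paper carries out. Note also that in your step~2 you assume you can compute the stabilizer \emph{exactly}; the paper obtains only $\mathfrak{S}_{\alpha^i} \le \Stab(\hatswh{i})$ a priori, and the reverse inclusion (Corollary~\ref{cor_stab_nn}) falls out of the squeeze, not before it. Finally, your dimension match invokes Chow's $e$-expansion via Theorem~\ref{thm:Brosnan-Chow}, but the paper's stated aim is to give a geometric proof \emph{of} Chow's result; the internal count of Proposition~\ref{prop_dimension} is what keeps the argument independent.
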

This provides a \emph{geometric} proof of a known result of Conjecture~\ref{conj:e-positivity} on chromatic quasisymmetric functions when $k=1$, done by Chow in \cite{Chow_h2}; see Theorem~\ref{thm:Chow}.

Our paper is structured as follows. In Section~\ref{sec:preliminaries}, we recall the basics on regular  semisimple Hessenberg varieties and symmetric group action on their cohomology spaces; especially focusing  on the results in \cite{CHL}. We define a special set $\{\sigma_{w,h} \mid w \in \mathcal G_h^k\}$
 of module  generators of $H^{2k}(\Hess(S,h))$ in Section~\ref{sec:module generators}. After restricting ourselves to $k = 1$, we
classify the elements in~$\mathcal G_h^1$ and compute the stabilizer subgroup of the generators~$\swh{w}$ for $w\in \mathcal G_h^1$ in Section~\ref{sec:generators of H2}.
In Section~\ref{sec_partition}, we consider a partition of $\mathfrak{S}_n$ defined by a certain equivalence relation such that there is a bijective correspondence between the set $\mathcal G_h^1$ and the equivalence classes. Furthermore, we enumerate the cardinality of each class and the dimension of $H^{2}(\Hess(S,h))$.
Section~\ref{sec_geometric_construction} is devoted to construct a permutation module decomposition of $H^{2}(\Hess(S,h))$.
In the final section, we suggest a more refined conjecture.

\section{Preliminaries}\label{sec:preliminaries}

\subsection{Hessenberg varieties}
\label{sec_preliminaries_1}

For a positive integer $n$, we use
$\mathfrak{S}_n$ to denote the symmetric group on $[n]$.
Hessenberg varieties are certain subvarieties of the \emph{full flag variety} $\flag(\C^n)$;
\[
\flag(\C^n) \colonequals  \{ V_{\bullet} = (\{0\} \subsetneq V_1 \subsetneq V_2 \subsetneq \cdots \subsetneq V_{n-1} \subsetneq \C^n) \mid \dim_{\C} V_i = i \,\,\, \text{ for all }  1\leq i \leq n\}.
\]

Hessenberg varieties in $\flag(\C^n)$ are parametrized by $n \times n$ matrices, and \emph{Hessenberg functions} $h \colon [n] \rightarrow [n]$ satisfying the following two properties
\begin{itemize}
\item $h(1)\leq h(2) \leq \cdots \leq h(n)$, and
\item $i\leq h(i)$ for all $i=1, \dots, n$.
\end{itemize}
A Hessenberg function is frequently described by writing its values in a parenthesis; $h=(h(1), \dots, h(n))$.

\begin{definition}\label{def_Hess_var}
	Let $h$ be a Hessenberg function and let $S$ be a regular semisimple linear operator with $n$ distinct eigenvalues.
	The regular semisimple \textit{Hessenberg variety} $\Hess(S,h)$ is a subvariety of the flag variety defined as
	\[
	\Hess(S,h) = \{ V_{\bullet} \in \flag(\C^n) \mid S V_{i} \subset V_{h(i)} \,\,\, \text{ for all }1 \leq i \leq n \}.
	\]
\end{definition}

We note that if $h(i) \geq i +1$ for all $1 \leq i \leq n-1$, then $\Hess(S,h)$ is irreducible. Because of this, we assume $h(i) \geq i+1$ for all $1 \leq i \leq n-1$ throughout this paper.
The notion of the length $\ell(w)$ of a permutation $w\in \mathfrak{S}_n$, which counts the number of inversions of $w$, is extended to a quantity respecting a given Hessenberg function $h$;
\[\ell_h(w) \colonequals  \lvert\{ (j,i) \mid 1 \leq j < i \leq h(j), w(j) > w(i)\}\rvert. \]
Note that $\ell(w)=\ell_h(w)$ for $h=(n, n, \dots, n)$.

Some fundamental properties of the Hessenberg varieties are stated in the following proposition.
See Section 2 of \cite{CHL} and the references therein for more details.

\begin{proposition}\label{prop_basics}
Let $S$ be a regular semisimple linear operator and let $h \colon [n] \to [n]$ be a Hessenberg function satisfying $h(i) \geq i+1$ \textup{(}$1 \leq i \leq n-1$\textup{)}.
\begin{enumerate}

\item $\Hess(S,h)$ is a smooth variety of $\C$-dimension $\sum_{i=1}^n (h(i) - i)$.
\item The complex torus $T \colonequals  (\Cstar)^n$ acts on $\Hess(S,h)$ by left multiplication, and the fixed points of this action can be identified with the permutations in $\mathfrak{S}_n$.
\item\label{BB_decomp} \textup{(}Bia{\l}ynicki-Birula decomposition\textup{)} $\Hess(S,h)$ is decomposed into their plus cells $X_{w,h}^{\circ}$ and also into their minus cells $\Owho{w}$;
\[\Hess(S,h) =\bigsqcup_{w \in \mathfrak{S}_n} X_{w,h}^{\circ}= \bigsqcup_{w \in \mathfrak{S}_n} \Owho{w}\,.\]
The plus cell $X_{w,h}^{\circ}$ \textup{(}the minus cell $\Omega_{w,h}^{\circ}$, respectively\textup{)} is the intersection of $\Hess(S,h)$ with the Schubert cell $X_w^{\circ}$ \textup{(}the opposite Schubert cell $\Omega_w^{\circ}$, respectively\textup{)} of $\flag(\C^n)$.

\item\label{dim} The dimension of plus cells and minus cells are given by
\[ \dim_{\C} X_{w,h}^{\circ} = \ell_h(w) \,\,\,\mbox{ and }\,\,\, \dim_{\C} \Owho{w}= \dim_{\C}(\Hess(S,h)) -\ell_h(w)\,, \] and therefore the Poincar\'e polynomial $\mathscr{P}\textup{oin}(\Hess(S,h), q)$ is given by
	\[
	\mathscr{P}\textup{oin}(\Hess(S,h) ,q )
	\colonequals  \sum_{k \geq 0} \dim_{\mathbb{C}} H^{k}(\Hess(S,h))  q^k
	= \sum_{w \in \mathfrak{S}_n} q^{2\ell_h(w)}.
	\]
\item For any regular semisimple linear operator $S'$, $H^*(\Hess(S,h))\cong H^*(\Hess(S',h))$ as graded $\C$-algebras.
\end{enumerate}
\end{proposition}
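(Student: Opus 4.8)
The plan is to fix a basis of eigenvectors $e_1,\dots,e_n$ of $S$ with distinct eigenvalues $\la_1,\dots,\la_n$, so that $S$ becomes diagonal, and then to extract every assertion from the resulting torus geometry. Since the diagonal torus $T=(\Cstar)^n$ commutes with the diagonal operator $S$, it preserves each condition $SV_i\subseteq V_{h(i)}$ and hence acts on $\Hess(S,h)$. The $T$-fixed flags in $\flag(\C^n)$ are exactly the coordinate flags $V_\bullet^w$ with $V_i^w=\span\{e_{w(1)},\dots,e_{w(i)}\}$ for $w\in\mathfrak{S}_n$, and each lies in $\Hess(S,h)$ because $SV_i^w=V_i^w\subseteq V_{h(i)}^w$ (using $h(i)\geq i$). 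Thus all $n!$ coordinate flags are the $T$-fixed points of $\Hess(S,h)$, which gives assertion~(2).

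For~(1) I would linearize the Hessenberg conditions at each fixed point. The tangent space $T_{V_\bullet^w}\flag(\C^n)$ has a weight decomposition under $T$ indexed by the pairs $i<j$, and because $S$ has distinct eigenvalues the linearized condition $SV_i\subseteq V_{h(i)}$ retains precisely the weight lines indexed by pairs with $j\leq h(i)$. Counting these yields $\sum_{i=1}^n(h(i)-i)=N_h$ weight lines at every fixed point, independently of $w$. Equivalently, one checks that $\Hess(S,h)$ is the transverse vanishing locus of the natural bundle section encoding $SV_i\subseteq V_{h(i)}$, the transversality being exactly where distinct eigenvalues are used. Either way one concludes that $\Hess(S,h)$ is smooth of dimension $N_h$. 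I expect this smoothness to be the main obstacle, since it is the only step that uses regular semisimplicity in an essential way; everything else is formal once smoothness and isolated fixed points are in hand.

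For~(3) and~(4) I would choose a generic one-parameter subgroup $\gamma$ whose fixed locus coincides with $\Hess(S,h)^T$, oriented so that its plus cells recover the Schubert cells. As $\Hess(S,h)$ is smooth and complete with isolated $\gamma$-fixed points, the Bia{\l}ynicki-Birula theorem paves it by affine plus cells $\Xwo{w,h}=\{x\mid \lim_{s\to 0}\gamma(s)\cdot x=V_\bullet^w\}$ and, using $s\to\infty$, by minus cells $\Owho{w}$. Since the same $\gamma$ realizes the Schubert cells $\Xwo{w}$ and opposite Schubert cells $\Owo{w}$ of $\flag(\C^n)$ as its plus and minus cells, and $\Hess(S,h)$ is $\gamma$-stable, these cells are the intersections $\Hess(S,h)\cap \Xwo{w}$ and $\Hess(S,h)\cap\Owo{w}$, giving~(3). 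The dimension of $\Xwo{w,h}$ equals the number of attracting tangent directions at $w$, which a direct weight computation identifies with the pairs $i<j\leq h(i)$ such that $w(i)>w(j)$; this count is $\ell_h(w)$ by definition, and complementarily $\dim_\C\Owho{w}=N_h-\ell_h(w)$. Because all cells are even-dimensional, the plus cells give an algebraic cell structure concentrated in even degrees, so $H^*(\Hess(S,h))$ is free with $\dim_\C H^{2k}=\#\{w\mid \ell_h(w)=k\}$, which is exactly the stated Poincar\'e polynomial; this settles~(4).

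Finally, for~(5) I would use that the regular semisimple locus in $\mathrm{End}(\C^n)$, being the complement of the discriminant hypersurface, is connected, so that any two regular semisimple operators $S,S'$ are joined by a path inside it. Over this locus the total space $\{(S',V_\bullet)\mid S'V_i\subseteq V_{h(i)}\}$ is a smooth proper family whose fibers $\Hess(S',h)$ are all smooth of dimension $N_h$; hence it is a locally trivial fiber bundle by Ehresmann's theorem, and its fibers are mutually diffeomorphic. A diffeomorphism induces an isomorphism of graded $\C$-algebras $H^*(\Hess(S,h))\cong H^*(\Hess(S',h))$, which proves~(5).
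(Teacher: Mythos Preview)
Your proposal is correct and follows the standard arguments; the paper itself does not prove this proposition but simply refers the reader to Section~2 of \cite{CHL} and the references therein (De Mari--Procesi--Shayman for smoothness, Bia{\l}ynicki-Birula theory for the cell decomposition). Your sketch is precisely what one finds in those sources, so there is nothing substantive to compare.
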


A regular semisimple Hessenberg variety $\Hess(S,h)$ is a GKM manifold \cite{T2, T1}, and the equivariant cohomology ring of $\Hess(S,h)$ can be described in terms of its GKM graph~$(V, E, \alpha)$ as a subring of the direct sum of polynomial rings $\bigoplus_{v \in V} \C[t_1,\dots,t_n]$. We use $s_{j, i}$ to denote the transposition in $\mathfrak{S}_n$ that exchanges $j$ and $i$ for $1 \leq j < i \leq n$.

\begin{theorem}[{\cite{GKM98}}]\label{thm_GKM}
Let $\Hess(S,h)$ be a regular semisimple Hessenberg variety.
The GKM graph $(V, E, \alpha)$ of $\Hess(S,h)$ comprises the set $E = \mathfrak{S}_n$ of vertices, the set of \textup{(}directed\textup{)} edges $E=\{ (v\rightarrow w)\mid w=vs_{j, i}, \mbox{ for } j<i\leq h(j)\}$, and the edge labeling $\alpha$ such that $\alpha(v\rightarrow vs_{j, i})=t_{v(i)}-t_{v(j)}\in \C[t_1,\dots,t_n]$. Moreover,
	\[
	H^{\ast}_T(\Hess(S,h)) \cong
	\left\{
	(p(v)) \in \bigoplus_{v \in \mathfrak{S}_n} \C[t_1,\dots,t_n] \,\middle\vert\, \alpha{(v \to w)} \mid (p(v) - p(w)) \, \text{ for all }(v \to w) \in E
	\right\}.
	\]
\end{theorem}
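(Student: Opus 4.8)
\section*{Proof proposal}

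The statement is an instance of the general localization theorem of Goresky--Kottwitz--MacPherson, so the plan is not to reprove that theorem but to verify its hypotheses for $\Hess(S,h)$ and to extract the three pieces of combinatorial data---the vertex set, the edge set, and the labeling---appearing in its conclusion. The GKM theorem applies to a smooth compact $T$-variety that is \emph{equivariantly formal}, has finitely many $T$-fixed points, and has finitely many one-dimensional $T$-orbits. Compactness holds since $\Hess(S,h)$ is a closed subvariety of the projective variety $\flag(\C^n)$, and smoothness is Proposition~\ref{prop_basics}(1). Equivariant formality follows from the Bia{\l}ynicki--Birula decomposition in Proposition~\ref{prop_basics}(3)--(4): the plus cells $\Xwo{w}$ are affine spaces of complex dimension $\ell_h(w)$ indexed by $w\in\mathfrak S_n$, so $H^{\mathrm{odd}}(\Hess(S,h))=0$ and $H^*_T(\Hess(S,h))$ is a free $H^*_T(\mathrm{pt})$-module whose restriction to the fixed locus is injective. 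Finiteness of the fixed set is Proposition~\ref{prop_basics}(2).

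First I would make the torus data explicit. After diagonalizing $S=\mathrm{diag}(c_1,\dots,c_n)$ in an eigenbasis $e_1,\dots,e_n$ on which $T=(\Cstar)^n$ acts with weights $t_1,\dots,t_n$, the $T$-fixed flags are exactly the coordinate flags $V^w_\bullet$ with $V^w_i=\langle e_{w(1)},\dots,e_{w(i)}\rangle$, recovering $\Hess(S,h)^T=\mathfrak S_n$. The next step is to compute the $T$-representation on the tangent space at each fixed flag. In $\flag(\C^n)$ the tangent space $T_{V^w_\bullet}\flag(\C^n)$ decomposes into one-dimensional $T$-weight spaces with weights $t_{w(i)}-t_{w(j)}$ for $1\le j<i\le n$, and the Hessenberg condition $SV_i\subset V_{h(i)}$ is linear in the corresponding affine coordinates and cuts out precisely the weight spaces with $i\le h(j)$. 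Hence the nonzero isotropy weights at $V^w_\bullet$ are $\{\,t_{w(i)}-t_{w(j)} \mid j<i\le h(j)\,\}$, a set of size $\sum_i (h(i)-i)=\dim_\C\Hess(S,h)$ by Proposition~\ref{prop_basics}(1), confirming that $\Hess(S,h)$ is smooth of the expected dimension at each fixed point.

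Finally I would identify the one-dimensional orbits. Each nonzero isotropy weight $t_{w(i)}-t_{w(j)}$ at $V^w_\bullet$ is the tangent weight of a one-dimensional $T$-orbit whose closure is a $T$-invariant $\mathbb P^1$, and I would verify in an explicit local coordinate along this $\mathbb P^1$ that its two fixed points are $V^w_\bullet$ and $V^{ws_{j,i}}_\bullet$, so that it contributes the edge $(w\to ws_{j,i})$ with label $t_{w(i)}-t_{w(j)}$. This produces exactly the vertex set $\mathfrak S_n$, the edge set $E=\{(v\to vs_{j,i}) \mid j<i\le h(j)\}$, and the labeling $\alpha(v\to vs_{j,i})=t_{v(i)}-t_{v(j)}$ in the statement, and the GKM theorem then yields the description of $H^*_T(\Hess(S,h))$ as the subring of $\bigoplus_v\C[t_1,\dots,t_n]$ consisting of tuples $(p(v))$ with $\alpha(v\to w)\mid (p(v)-p(w))$ along every edge. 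The main obstacle is this last tangent-space-and-orbit analysis: one must work in explicit affine charts of $\flag(\C^n)$ around each $V^w_\bullet$, use the Hessenberg equations to determine which tangent directions survive, and then track how the two endpoints of each invariant curve differ by the transposition $s_{j,i}$ so as to read off both the edges and their weights correctly.
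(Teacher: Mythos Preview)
The paper does not give a proof of this theorem at all: it is stated as background and attributed to \cite{GKM98} (with the specific GKM data for $\Hess(S,h)$ being standard in the Hessenberg literature, e.g.\ \cite{T2,T1}). Your proposal is therefore not being compared against a proof in the paper but against a citation.

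That said, your sketch is the correct and standard route: verify equivariant formality via the vanishing of odd cohomology coming from the Bia{\l}ynicki--Birula paving, identify the fixed points as coordinate flags, compute the tangent weights at each fixed point by restricting the tangent space of $\flag(\C^n)$ via the Hessenberg conditions, and read off the invariant $\mathbb P^1$'s and their endpoints. Nothing is missing conceptually; the only work left is the explicit chart computation you flag at the end, which is routine and appears in the references the paper cites.
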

We note that for the set $E$ of edges in the statement, we have $(v \to w) \in E$ if and only if $(w \to v) \in E$. In particular, $\alpha(v \to w) = -\alpha(w \to v)$.

The main concern of this article is on the structure of the cohomology $H^{\ast}(\Hess(S,h))$ which can be described from Theorem~\ref{thm_GKM} using the following ring isomorphism;
\[
H^{\ast}(\Hess(S,h))\cong  H^{\ast}_T(\Hess(S,h))/(t_1, \dots, t_n) \,,
\]
where we use $t_i$ to indicate the element in $H^{\ast}_T(\Hess(S,h))$ whose value at each $v \in \mathfrak{S}_n$ is $t_i$.

\subsection{\texorpdfstring{Bia{\l}ynicki-Birula}{Bialynicki-Birula} basis and symmetric group action}
\label{sec_preliminaries_2}

The closure $\Owh{w}  \colonequals  \overline{\Owho{w}}$ of a minus cell $\Owho{w}$ defines a class $[\Owh{w}]$ in the equivariant Chow ring $A^{\ast}_T(\Hess(S,h))$ graded by the codimension,  and the cycle map from the Chow ring to the cohomology ring is an isomorphism
\[
cl_{\Hess(S,h)}^T \colon A_{T}^{\ast}(\Hess(S,h))\stackrel{\cong}{\longrightarrow} H_T^{2\ast}(\Hess(S,h)).
\]

\begin{definition}[{\cite[Definition~2.9]{CHL}}]\label{def_swh_classes}
	Let $\Hess(S,h)$ be a regular semisimple Hessenberg variety.
	For $w \in \mathfrak{S}_n$, the \emph{Bia{\l}ynicki-Birula class \textup{(}BB class\textup{)}} $\swh{w}^T \in H^{2 \ell_h(w)}_{T}(\Hess(S,h))$ is the image of the class 
\[
[\Owh{w}] \in A^{\ell_h(w)}_T(\Hess(S,h))
\] 
under the cycle map $cl_{\Hess(S,h)}^T$.
 We let $\sigma_{w, h}\in H^{2 \ell_h(w)}(\Hess(S,h))$ be the corresponding class of $\swh{w}^T \in H^{2 \ell_h(w)}_{T}(\Hess(S,h))$.
\end{definition}

Because of Proposition~\ref{prop_basics}\eqref{BB_decomp}, the BB classes $\swh{w}^T$ ($\swh{w}$, respectively) form a basis of the equivariant cohomology space (ordinary cohomology space, respectively) of $\Hess(S,h)$.

\begin{proposition}\label{prop_basis}
Let $\Hess(S,h)$ be a regular semisimple Hessenberg variety. Then the classes~$ \swh{w}^T$, $w  \in \mathfrak{S}_n $, form a basis called the \emph{Bia{\l}ynicki-Birula basis (BB basis)} of $H^{\ast}_T(\Hess(S,h))$, and the classes $ \swh{w}$, $w  \in \mathfrak{S}_n $, form a basis called the \emph{Bia{\l}ynicki-Birula basis (BB basis)} of~$H^{\ast}(\Hess(S,h))$.
\end{proposition}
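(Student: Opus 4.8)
The plan is to deduce both assertions from the standard theory of pavings by affine spaces, combined with the cycle-map isomorphism recorded just before Definition~\ref{def_swh_classes}. The input is Proposition~\ref{prop_basics}\eqref{BB_decomp}--\eqref{dim}: the minus cells $\Owho{w}$ decompose $\Hess(S,h)$, and each $\Owho{w}$ is, by Bia{\l}ynicki-Birula theory for the smooth projective variety $\Hess(S,h)$, isomorphic to an affine space of dimension $\dim_\C \Hess(S,h) - \ell_h(w)$. First I would record two structural facts about this decomposition. It is $T$-invariant, since the cells are intersections of $\Hess(S,h)$ with the opposite Schubert cells of $\flag(\C^n)$, which are themselves torus-invariant. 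And it is \emph{filtrable}: choosing a generic one-parameter subgroup of $T$ orders the fixed points by a moment-map value and produces a filtration of $\Hess(S,h)$ by closed subvarieties whose successive differences are exactly the cells $\Owho{w}$. Filtrability is what allows one to present $\Hess(S,h)$ as a successive extension by affine cells and to run the localization sequences.

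Given a filtrable, $T$-invariant paving by affines, the standard cellular-decomposition argument---applied to the localization exact sequences for equivariant Chow groups---shows that the classes $[\Owh{w}]$ of the cell closures form a basis of $A_\ast^T(\Hess(S,h))$ as a free module over $A_\ast^T(\mathrm{pt}) \cong \C[t_1,\dots,t_n]$, with $[\Owh{w}]$ placed in codimension $\ell_h(w)$. Transporting this basis through the cycle-map isomorphism $cl_{\Hess(S,h)}^T$ yields at once that the BB classes $\swh{w}^T$ form a free $\C[t_1,\dots,t_n]$-module basis of $H_T^\ast(\Hess(S,h))$, which is the first assertion. A useful bookkeeping check is that the counts match degree by degree: Proposition~\ref{prop_basics}\eqref{dim} gives exactly $\dim_\C H^{2k}(\Hess(S,h))$ indices $w$ with $\ell_h(w)=k$, so freeness upgrades any spanning statement to a genuine basis in each graded piece.

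For the ordinary cohomology I would pass to the quotient through the isomorphism $H^\ast(\Hess(S,h)) \cong H_T^\ast(\Hess(S,h))/(t_1,\dots,t_n)$. Because the previous step exhibits $H_T^\ast(\Hess(S,h))$ as a free $\C[t_1,\dots,t_n]$-module on $\{\swh{w}^T\}$, reduction modulo the ideal $(t_1,\dots,t_n)$ carries this free basis to a $\C$-basis of the quotient; since $\swh{w}$ is by Definition~\ref{def_swh_classes} precisely the image of $\swh{w}^T$, the family $\{\swh{w}\}_{w \in \mathfrak S_n}$ is a basis of $H^\ast(\Hess(S,h))$. Equivalently, one may rerun the paving argument non-equivariantly, using that the cycle map $A_\ast(\Hess(S,h)) \to H_{2\ast}(\Hess(S,h))$ is an isomorphism and that the odd-degree cohomology vanishes.

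The routine ingredients---affineness of the minus cells, vanishing of odd cohomology, and the cycle-map isomorphism---are all standard Bia{\l}ynicki-Birula facts for smooth projective $T$-varieties, so the only delicate point is the \emph{freeness} of $H_T^\ast(\Hess(S,h))$ over $\C[t_1,\dots,t_n]$ that the specialization step relies upon. I expect this equivariant formality to be the main obstacle to pin down cleanly; it rests entirely on the filtrability of the BB decomposition, so the crux is verifying that the minus cells can be arranged into a filtration by closed subvarieties, which I would settle by ordering the $T$-fixed points via a generic component of the moment map.
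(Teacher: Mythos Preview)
Your proposal is correct and follows the standard route; the paper itself offers no proof beyond the single sentence preceding the proposition, which attributes the result to Proposition~\ref{prop_basics}\eqref{BB_decomp}. You have simply spelled out the argument that the paper takes for granted, namely that a filtrable $T$-invariant affine paving yields free bases of equivariant and ordinary cohomology via the cycle map, so there is no substantive difference in approach.
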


In \cite{T2, T1}, Tymoczko defined an action called the \emph{dot action} of the symmetric group $\mathfrak S_n$   on the equivariant cohomology $H_T^*(\Hess(S,h))$ as follows.
For $u \in \mathfrak S_n$ and $\sigma=(\sigma(v))_{v \in \mathfrak S_n} \in H_T^*(\Hess(S,h)) \subset \bigoplus_{v \in \mathfrak S_n} \mathbb C[t_1, \dots, t_n]$,
\[
(u\cdot \sigma)(v) \colonequals
(\sigma(u^{-1}v))(t_{u(1)}, \dots, t_{u(n)}).
\]
Since the ideal $(t_1,\dots,t_n) \subset H^{\ast}_T(\Hess(S,h))$ is  invariant under the dot action on $H^{\ast}_T(\Hess(S,h))$, it induces an action on $H^{\ast}(\Hess(S,h))$ which we call the \emph{dot action} as well.


We review the description of $s_i \cdot \swh{w}$ provided in~\cite[Section~4]{CHL}.
For two permutations~$v, w \in \mathfrak S_n$ such that $w=v s_{i, j}$ and $\ell(v) >\ell(w)$, we use $v \rightarrow w$ to mean that $(v \rightarrow w)$ is an edge of the GKM graph of $\Hess(S,h)$; and $v \dasharrow w$ to mean that  $(v \rightarrow w)$ is not an edge of the GKM graph of $\Hess(S,h)$.

Let $w$ be a permutation in $\mathfrak S_n$ and let $s_i=s_{i, i+1}$ be a simple reflection such that $w \rightarrow s_i w$. The Bia{\l}ynicki-Birula decomposition of $\Omega_{s_{i}w,h}$ is given by
\[
\Omega_{s_{i}w,h}= \bigsqcup_{u \in \Omega_{s_{i}w,h}^T}(\Omega_u^{\circ} \cap \Omega_{s_{i}w,h}). 
\]
Here, for a given $T$-invariant variety~$X$, we denote by $X^T$ the set of $T$-fixed points.

Define $\mathcal A_{s_i,w}$ by the set of all $u \in    \Omega_{s_{i}w,h}^T \cap \Omega_w^T$ such that  $\dim_{\C} (\Omega_u^{\circ} \cap \Omega_{s_{i}w,h})  =\dim_{\C} \Omega_{w,h}$ and  $u \dashrightarrow s_iu$.
For $u \in \mathcal A_{s_i,w}$, define $\mathcal T_u$ and $\mathcal T_{s_iu}$  by the closures of $   \Omega_u^{\circ} \cap \Omega_{s_{i}w,h}$ and $   \Omega_{s_iu}^{\circ} \cap \Omega_{s_{i}w,h}$, and
let $\tau_u$ and $\tau_{s_iu}$ denote the classes in $H^*(\Hess(S,h))$ induced by $\mathcal T_u$ and $\mathcal T_{s_iu}$, respectively.


We recall the following proposition.

\begin{proposition}[{\cite[Theorem~B]{CHL}}]\label{prop:simple action}
	Let $w$ be an element in $\mathfrak{S}_n$ and let $s_i = s_{i,i+1}$ be a simple reflection.
\begin{enumerate}
	\item If $ w  \dashrightarrow s_iw$ or $s_iw \dasharrow w$, then $s_i \cdot \swh{w} =\swh{s_i w}$.
	\item If $s_i w \rightarrow w$, then $s_i \cdot \swh{w} = \swh{w}$.
	\item If $w \rightarrow s_iw$, then
	\[
	\left(s_i \cdot \swh{w} + \sum_{u \in \mathcal A_{s_i,w} } \tau_{s_iu} \right) = \swh{w} + \sum_{u   \in \mathcal A_{s_i,w} } \tau_u.
	\]
and the intersection $\mathcal A_{s_i,w} \cap s_i\mathcal A_{s_i,w}$ is empty. 
\end{enumerate}
\end{proposition}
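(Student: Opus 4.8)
The plan is to transport the whole statement into the GKM localization picture of Theorem~\ref{thm_GKM}, where a class is recorded by its values $\swh{w}(v)\in\C[t_1,\dots,t_n]$ over $v\in\mathfrak S_n$ and, by the definition of the dot action, the simple reflection $s_i=s_{i,i+1}$ acts by $(s_i\cdot\sigma)(v)=\sigma(s_iv)\big|_{t_i\leftrightarrow t_{i+1}}$, where the substitution swaps $t_i$ and $t_{i+1}$. The argument rests on two facts. First (triangularity): since $\Owh{w}=\overline{\Owho{w}}$ is the closure of the codimension-$\ell_h(w)$ minus cell (Proposition~\ref{prop_basics}), a fixed point $v$ lies in $\Owh{w}$ only when $\ell_h(v)\ge \ell_h(w)$, with equality exactly at $v=w$; thus $\swh{w}(v)=0$ unless $w\trianglelefteq v$ in the minus-cell closure order, along which $\ell_h$ is strictly increasing, and $\swh{w}(w)$ equals the product of the edge labels in the down-directions at $w$. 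Consequently a homogeneous class of cohomological degree $2\ell_h(w)$ supported on the principal up-set $\{v\mid w\trianglelefteq v\}$ must be a scalar multiple of $\swh{w}$, which is the uniqueness I will exploit. Second (degree bookkeeping): passing from $w$ to $s_iw$ only reverses the relative order of the values $i,i+1$, hence flips exactly the single pair of positions $(w^{-1}(i),w^{-1}(i+1))$; so $\ell_h(s_iw)=\ell_h(w)$ when that pair lies outside the Hessenberg window (no GKM edge), and $\ell_h(s_iw)=\ell_h(w)\pm1$ when it lies inside (a GKM edge). This is precisely the dichotomy organizing the three cases.

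In cases (1) and (2) I would match support, degree and leading value and invoke the uniqueness above. For (1), the absence of an edge gives $\ell_h(s_iw)=\ell_h(w)$, so $s_i\cdot\swh{w}$ sits in the degree of $\swh{s_i w}$; showing that $\{v\mid w\trianglelefteq s_iv\}\subseteq\{v\mid s_i w\trianglelefteq v\}$, which follows from how the closure order interacts with left multiplication by $s_i$ in the no-edge case, places $s_i\cdot\swh{w}$ in the up-set of $s_iw$, and the leading value at $v=s_i w$ reduces to $\swh{w}(w)$ with $t_i,t_{i+1}$ swapped, which is exactly the product of down-edge labels at $s_iw$; hence $s_i\cdot\swh{w}=\swh{s_i w}$. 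Case (2), $s_iw\to w$, is the mirror situation: the GKM edge relation across the pair $\{w,s_iw\}$ together with the antisymmetry of the swap on the label $t_i-t_{i+1}$ forces the leading data to reproduce $\swh{w}$, giving $s_i\cdot\swh{w}=\swh{w}$.

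Case (3), $w\to s_iw$, is the genuine obstacle and the only one with correction terms. The edge now forces $\ell_h(s_iw)=\ell_h(w)-1$, so $s_i\cdot\swh{w}$ lives one degree above $\swh{s_i w}$ and cannot be a single BB class. The strategy is to recognize every class in the identity as that of a codimension-one stratum of the decomposition $\Owh{s_i w}=\bigsqcup_u(\Omega_u^\circ\cap\Owh{s_i w})$: the stratum $u=w$ is exactly $\Owho{w}$ (since $w>s_i w$ in Bruhat order makes $\Owho{w}\subset\Owh{s_i w}$), so $\tau_w=\swh{w}$, while the strata indexed by $u\in\mathcal A_{s_i,w}$ produce the $\tau_u$ and their reflections produce the $\tau_{s_iu}$. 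The crux is to prove that the dot action permutes these codimension-one strata by $u\leftrightarrow s_iu$ exactly on the indices with no edge $u \dashrightarrow s_i u$ (the defining condition of $\mathcal A_{s_i,w}$), whereas the edge at $u=w$ blocks the clean swap of $\Owho{w}$ and leaves the residual $\swh{w}$; assembling these contributions gives the stated identity. The disjointness $\mathcal A_{s_i,w}\cap s_i\mathcal A_{s_i,w}=\varnothing$ is immediate, since $u\in\mathcal A_{s_i,w}$ requires $\ell(u)>\ell(s_iu)$ while membership in $s_i\mathcal A_{s_i,w}$ would force the reversed inequality.

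The hardest step is establishing, in case (3), that the combinatorially defined dot action genuinely implements the geometric swap $u\leftrightarrow s_iu$ of the strata. I would settle this by a localization computation: evaluate $(s_i\cdot\swh{w})(v)=\swh{w}(s_iv)\big|_{t_i\leftrightarrow t_{i+1}}$ together with the values $\tau_u(v)$ and $\tau_{s_iu}(v)$ at every $v$, and verify the identity fixed point by fixed point, using the edge relations on the rank-one $\mathbb P^1$'s joining $v$ and $s_iv$ to control the strata classes. Organizing this bookkeeping over the $\mathbb P^1$-chains, separating the pairs $\{v,s_iv\}$ that carry a GKM edge from those that do not, is where the real work concentrates, and it is exactly the geometric content encoded in the definition of $\mathcal A_{s_i,w}$.
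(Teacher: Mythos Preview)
The paper does not prove Proposition~\ref{prop:simple action}; it is stated as a citation of \cite[Theorem~B]{CHL} and used as a black box throughout. So there is no ``paper's own proof'' to compare against---your proposal is an attempt to reconstruct an argument the authors outsourced.

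On the substance of your sketch: the degree bookkeeping and the triangularity heuristic for cases (1) and (2) are along the right lines, and an argument of that flavor (support and leading-term matching in the BB basis) can be made to work. However, your treatment of case (3) has a genuine gap. You propose to ``verify the identity fixed point by fixed point'' by evaluating $\tau_u(v)$ and $\tau_{s_iu}(v)$ via localization, but those classes are defined as the fundamental classes of the closures $\mathcal T_u=\overline{\Omega_u^\circ\cap\Owh{s_iw}}$, and their localizations are not computable without first understanding the geometry of those closures---which is exactly the hard part. The assertion that the dot action ``permutes these codimension-one strata by $u\leftrightarrow s_iu$'' is not something you can read off from the GKM description of the dot action alone; it requires a geometric argument relating $s_i\cdot[\Owh{w}]$ to a cycle decomposition of $\Owh{s_iw}$, which in \cite{CHL} is carried out via an intersection-theoretic analysis rather than a pointwise localization check. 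Your sketch correctly identifies where the content lies but does not supply it.
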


\section{Module generators}\label{sec:module generators}

In view of Theorem~\ref{thm:Brosnan-Chow},
the number of permutation modules whose direct sum is $H^{2k}(\Hess(S,h))$ is expected to be the same as $m \colonequals  \dim_{\C} H^{2k}(\Hess(S,h))^{\mathfrak S_n}$.
In this section, we will show that there are $m$ classes $\sigma_{w,h}\in H^{2k}(\Hess(S,h))$, generating $H^{2k}(\Hess(S,h))$ as an $\mathfrak S_n$-module.

\begin{definition}\label{def:generators}
For  a Hessenberg function  $h \colon [n] \rightarrow [n]$,
   let
\[
\mathcal G_h \colonequals  \{ w \in \mathfrak{S}_n \mid  w^{-1}(w(j)+1) \leq h(j) \quad \text{ for } w(j) \in [n-1]\}
\]
and
\[
\mathcal G_h^k \colonequals  \{ w \in \mathcal G_h \mid \ell_h(w) = k \}.
\]

\end{definition}
In other words,
\begin{eqnarray*}
 \mathcal G_h &= & \mathfrak S_n \setminus \{ w \in \mathfrak S_n \mid w(j) +1 = w(i) \text{  for some } i >h(j)\} \\
  &=&   \{ w \in \mathfrak S_n\mid w(j) +1 \neq  w(i) \text{ for any } i >h(j)\}.
\end{eqnarray*}


\begin{proposition}[{cf. \cite[Lemma~2.3]{AHHM19}}] \label{prop:number of generators}
	For each $k \geq 0$, we have
	\[
	\lvert \mathcal G_h^k \rvert
= \dim_{\C} (H^{2k}(\Hess(S,h))^{\mathfrak{S}_n}).
\]
\end{proposition}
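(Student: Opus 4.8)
The plan is to reduce the statement to a computation of the multiplicity of the trivial $\mathfrak S_n$-representation and then to evaluate that multiplicity combinatorially through Theorem~\ref{thm:Brosnan-Chow}. First I would observe that, for any finite-dimensional $\mathfrak S_n$-module $V$, the dimension $\dim_\C V^{\mathfrak S_n}$ is exactly the multiplicity of the trivial representation in $V$, which under the Frobenius characteristic map equals the coefficient of $s_{(n)} = h_n$ in $\mathrm{ch}\,V$. Applying this to $V = H^{2k}(\Hess(S,h))$ and using Theorem~\ref{thm:Brosnan-Chow}, which identifies $\mathrm{ch}\,H^{2k}(\Hess(S,h))$ with the coefficient of $t^k$ in $\omega X_{G_h}(\mathbf x, t)$, I would rewrite the desired dimension as the coefficient of $h_n$ in $\omega [X_{G_h}]_{t^k}$. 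Since $\omega$ is an isometry for the Hall inner product and $\omega h_n = e_n$, this coefficient equals $\langle [X_{G_h}(\mathbf x,t)]_{t^k},\, e_n\rangle$. Thus it suffices to prove $\lvert \mathcal{G}_h^k\rvert = \langle [X_{G_h}]_{t^k}, e_n\rangle$.

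Next I would compute this pairing using the expansion of the chromatic quasisymmetric function in Gessel's fundamental quasisymmetric basis due to Shareshian and Wachs \cite{SW}, which writes $X_{G_h}(\mathbf x,t)$ as a sum over $\sigma \in \mathfrak S_n$ of terms $t^{\operatorname{inv}_h(\sigma)} F_{n,\,D_h(\sigma)}$, where the exponent is the $h$-inversion statistic, equal to $\ell_h(\sigma)$, and $D_h(\sigma) \subseteq [n-1]$ is the associated descent set. Because $e_n = s_{(1^n)}$ and $F_{n,[n-1]}$ is the unique fundamental quasisymmetric function occurring in the Schur expansion of $s_{(1^n)}$ and in no other $s_\lambda$ (the only standard Young tableau with descent set $[n-1]$ has single-column shape), pairing a symmetric function with $e_n$ extracts precisely the coefficient of $F_{n,[n-1]}$. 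Hence $\langle [X_{G_h}]_{t^k}, e_n\rangle$ counts the permutations $\sigma$ with $\ell_h(\sigma)=k$ and $D_h(\sigma) = [n-1]$. I would then translate the condition $D_h(\sigma)=[n-1]$, which forbids any index at which consecutive entries ascend without being joined by an edge of $G_h$, into the successor condition $w^{-1}(w(j)+1)\le h(j)$ defining $\mathcal{G}_h$, obtaining a length-preserving bijection between these permutations and $\mathcal{G}_h^k$.

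The main obstacle is this final translation: matching the combinatorial condition coming from the fundamental-quasisymmetric expansion, which is a statement about consecutive positions of $\sigma$, with the value-successor condition defining $\mathcal{G}_h$, which is a statement about where the value $w(j)+1$ may occur, together with the verification that the statistic $\operatorname{inv}_h$ transports to $\ell_h$ under this correspondence. I expect this to require passing to the inverse permutation and carefully reconciling the two descent conventions. As a consistency check and an alternative route, the count $\lvert \mathcal{G}_h^k\rvert$ should agree with the degree-$2k$ Betti number of the regular nilpotent Hessenberg variety $\Hess(N,h)$, whose cohomology is identified with $H^{2k}(\Hess(S,h))^{\mathfrak S_n}$; the analogous enumeration there is \cite[Lemma~2.3]{AHHM19}, which both motivates the definition of $\mathcal{G}_h$ and provides an independent verification of the cardinality.
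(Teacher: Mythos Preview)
Your approach is correct and genuinely different from the paper's. The paper does not pass through the chromatic quasisymmetric function at all; instead it invokes another result of Brosnan--Chow, namely that
\[
\dim_{\C} H^{2k}(\Hess(S,h))^{\mathfrak S_n} = \dim_{\C} H^{2k}(\Hess(N,h))
\]
for the regular nilpotent $N$, and then counts the Schubert-type cells of $\Hess(N,h)$ using \cite[Lemma~2.3]{AHHM19}: the cells are indexed by permutations with $w^{-1}(w(j)-1)\le h(j)$, and the dimension matches $\ell_h$. The convention-matching step you flag as the main obstacle is resolved in the paper by the explicit involution $\iota(w)(i)=n-w(i)+1$, which exchanges the ``$+1$'' condition defining $\mathcal G_h^k$ with the ``$-1$'' condition indexing nilpotent cells and swaps ascents with descents. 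Your route via Theorem~\ref{thm:Brosnan-Chow} and the Shareshian--Wachs $F$-expansion is equally valid, and your observation that pairing with $e_n=s_{(1^n)}=F_{n,[n-1]}$ extracts exactly the coefficient of $F_{n,[n-1]}$ is correct; the remaining bijection you anticipate (passing to inverse permutations to convert the positional $P$-descent condition into the value-successor condition) plays the same role as the paper's $\iota$. The paper's approach is shorter and more self-contained because it avoids the $F$-expansion machinery, but your approach has the virtue of making transparent the link between $\mathcal G_h$ and the sign-representation coefficient of $X_{G_h}$. Interestingly, you mention the nilpotent route as a consistency check at the end; that is precisely the route the paper takes as its primary argument.
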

\begin{proof}
We first notice that Brosnan and Chow~\cite[Theorem~127]{BrosnanChow18} proved 
\[
 \dim_{\C} (H^{2k} (\Hess(S,h))^{\mathfrak{S}_n}) = \dim_{\C} H^{2k}(\Hess(N,h))
\]
for every $k \geq 0$. 	
Here, $N$ is the Jordan canonical form of a regular nilpotent element in~$\frak{gl}_n(\C)$:
\[ 
N = \left(
\begin{array}{ccccc}
0&1& & &\\
 &0&1& & \\
 & & \rotatebox{45}{\vdots} &\rotatebox{45}{\vdots} & \\
 & & & 0 &1 \\
 &&&& 0
 \end{array}
\right).
\]

Note that for each $w \in \mathfrak{S}_n$, the intersection of Schubert cell $\Xwo{w}$ and $\Hess(N,h)$ is nonempty if and only if
\begin{equation}\label{eq nonempty intersection}
w^{-1}(w(j)-1) \leq h(j) \quad \text{ for  }j \in [n]
\end{equation}
(see~\cite[Lemma~2.3]{AHHM19}). Here, we use the convention that $w(0) = 0$.
  Moreover, for such $w$, the dimension of the intersection is given by
\begin{equation}\label{eq_dim_intersection}
\dim_{\C}(\Hess(N,h) \cap \Xwo{w})
= \lvert \{ (j,i) \mid 1 \leq j < i \leq h(j), w(j) > w(i)\} \rvert
\end{equation}
(see \cite[Section 2.2]{AHT2020},  \cite[Theorem~35 and the remark after it]{BrosnanChow18}).

To complete the proof, it is enough to show that there is a bijective correspondence between the following two sets:
\begin{eqnarray}
&&\{ w \in \mathfrak{S}_n \mid \dim_{\C}\Owho{w} = k, w^{-1}(w(j)+1) \leq h(j) \quad \text{ for } w(j) \in [n-1]\},\label{eq_compare_set_1}\\
&&\{ w \in \mathfrak{S}_n \mid \dim_{\C}(\Hess(N,h) \cap \Xwo{w})  = k,w^{-1}(w(j)-1) \leq h(j) \quad \text{ for  }j \in [n] \}.\label{eq_compare_set_2}
\end{eqnarray}
Note that by the dimension formula in Proposition~\ref{prop_basics}(\ref{dim}), we obtain 
\[
\dim_{\C} \Owho{w}  = \lvert \{ (j,i) \mid 1 \leq j < i \leq h(j), w(j) < w(i)\} \rvert.
\]
We consider the involution $\iota \colon \mathfrak{S}_n \to \mathfrak{S}_n$ given by $(\iota(w))(i) = n-w(i) +1$. Then, we get $(\iota(w))^{-1}(i) = w^{-1}(n-i+1)$ and
\[
\begin{split}
(\iota(w))^{-1}((\iota(w))(j) + 1)
&= (\iota(w))^{-1}(n - w(j) +1 + 1)\\
&=(\iota(w))^{-1}(n - w(j) + 2) \\
&= (\iota(w))^{-1}(n - (w(j)-1) + 1) \\
&= w^{-1}(w(j)-1).
\end{split}
\]
Moreover, for $1 \leq j < i \leq h(j)$, we have  $w(j) > w(i)$ if and only if $\iota(w)(j) = n- w(j) + 1 < n-w(i) + 1 = \iota(w)(i)$.
Therefore, the involution gives a desired bijective correspondence between two sets in~\eqref{eq_compare_set_1} and~\eqref{eq_compare_set_2}.
This completes the proof.
\end{proof}

\begin{example}
	Suppose that $n = 4$ and $h = (2,4,4,4)$. Then the elements $w \in \mathfrak{S}_4$ satisfying the condition $w^{-1}(w(j) -1) \leq h(j)$ for $j \in [4]$ and their involutions $\iota(w)$ are given as follows.
	\smallskip
	\begin{center}
	\begin{tabular}{c|l||l}
		$\dim_{\C} (\Hess(N,h) \cap \Xwo{w})$ & $w$ &  $\iota(w)$ \\
		\hline
		$4$ & $4321$ & $1234$\\
		$3$ & $4312$, $3241$, $1432$ & $1243$, $2314$, $4123$\\
		$2$ & $3214$, $2143$, $1423$, $1342$ & $2341$, $3412$, $4132$, $4213$\\
		$1$ & $2134$, $1324$, $1243$ & $3421$, $4231$, $4312$\\
		$0$ & $1234$ & $4321$
	\end{tabular}
\end{center}
\end{example}

\begin{definition} \label{def_graph_Ghw}
	Let $h$ be a Hessenberg function.
\begin{enumerate}

\item The \emph{incomparability graph} $G_h$ of $h$ is the graph with the vertex set~$[n]$ and the edge set $\{\{j,i\}\mid j <i \leq h(j)\}$.

\item For $w \in \mathfrak{S}_n$,
	  define a directed graph~$G_{w,h}$ with the vertex set~$[n]$ such that for each pair of indices $1 \leq j < i \leq n$, there is an edge $j \to i$ in $G_{w,h}$ if and only if
	\begin{eqnarray*}
	j < i \leq h(j), \quad w(j) < w(i),
	\end{eqnarray*}
and define $\overline{G}_{w,h}$ by adding edges $j \leftarrow i$ to $G_{w,h}$ for any pair $(j,i)$ satisfying  $j<i\leq h(j)$ and $w(j) > w(i)$. Then $\overline{G}_{w,h}$ is the incomparability graph of $h$ with an acyclic orientation, denoted by  $o_h(w)$.

\item Denote by $\mathcal O_h$ the set of all acyclic orientations of the incomparability graph $G_h$ of $h$. For each $k$, define $\mathcal O_h^k$ by the set of all acyclic orientations of $G_h$ such that the number of edges $i\leftarrow j$ with $i<j$ is $k$. Then $\mathcal O_h = \bigsqcup_k \mathcal O_h^k$.

\item Define an equivalence relation $\sim_h$ on $\mathfrak S_n$ by   $v  \sim_h w$ if $G_{v,h} =G_{w,h}$. In this case, we say that $v$ and $w$ {\it have the same graph type}. Denote by $[w]_h$ the equivalence class containing $w$.

\end{enumerate}
\end{definition}

\begin{example}

Let $h = (2,4,4,4)$.
There are twelve different acyclic orientations on the incomparability graph $G_h$ of $h$, each of which is described by $\overline{G}_{w,h}$ for $w \in \mathcal G_h$. We describe this correspondence.
\[
\begin{tikzcd}[row sep = 0.5em, column sep = 0.7em]
\begin{tikzpicture}[scale = 1, every label/.append style={text=blue, font=\footnotesize}, baseline=-.5ex]

\node (1) at (1,0) {$1$};
\node  (2) at (2,0) {$2$};
\node (3) at (3,0) {$3$};
\node (4) at (4,0) {$4$};

\draw[->, thick] (1)--(2);
\draw[->, thick] (2)--(3);
\draw[->, thick] (3)--(4);
\draw[->, out=45, in=135, thick] (2) to (4);

\end{tikzpicture}
& \begin{tikzpicture}[scale = 1, every label/.append style={text=blue, font=\footnotesize}, baseline=-.5ex]

\node (1) at (1,0) {$1$};
\node  (2) at (2,0) {$2$};
\node (3) at (3,0) {$3$};
\node (4) at (4,0) {$4$};

\draw[<-, red, thick] (1)--(2);
\draw[->, thick] (2)--(3);
\draw[->, thick] (3)--(4);
\draw[->, out=45, in=135, thick] (2) to (4);

\end{tikzpicture}
& \begin{tikzpicture}[scale = 1, every label/.append style={text=blue, font=\footnotesize}, baseline=-.5ex]

\node (1) at (1,0) {$1$};
\node  (2) at (2,0) {$2$};
\node (3) at (3,0) {$3$};
\node (4) at (4,0) {$4$};

\draw[->, thick] (1)--(2);
\draw[<-, red, thick] (2)--(3);
\draw[->, thick] (3)--(4);
\draw[->, out=45, in=135, thick] (2) to (4);

\end{tikzpicture}
&\begin{tikzpicture}[scale = 1, every label/.append style={text=blue, font=\footnotesize}, baseline=-.5ex]

\node (1) at (1,0) {$1$};
\node  (2) at (2,0) {$2$};
\node (3) at (3,0) {$3$};
\node (4) at (4,0) {$4$};

\draw[->, thick] (1)--(2);
\draw[->, thick] (2)--(3);
\draw[<-, red, thick] (3)--(4);
\draw[->, out=45, in=135, thick] (2) to (4);

\end{tikzpicture}  \\
\overline{G}_{1234,h} \arrow[u,equal]
& \overline{G}_{4123,h} \arrow[u,equal]
& \overline{G}_{2314,h} \arrow[u,equal]
& \overline{G}_{1243,h} \arrow[u,equal] \\
\begin{tikzpicture}[scale = 1, every label/.append style={text=blue, font=\footnotesize}, baseline=-.5ex]

\node (1) at (1,0) {$1$};
\node (2) at (2,0) {$2$};
\node (3) at (3,0) {$3$};
\node (4) at (4,0) {$4$};

\draw[<-, red, thick] (1)--(2);
\draw[<-,red, thick] (2)--(3);
\draw[->, thick] (3)--(4);
\draw[->, out=45, in=135, thick] (2) to (4);

\end{tikzpicture}
& \begin{tikzpicture}[scale = 1, every label/.append style={text=blue, font=\footnotesize}, baseline=-.5ex]

\node (1) at (1,0) {$1$};
\node (2) at (2,0) {$2$};
\node (3) at (3,0) {$3$};
\node (4) at (4,0) {$4$};

\draw[<-, red, thick] (1)--(2);
\draw[->, thick] (2)--(3);
\draw[<-, red, thick] (3)--(4);
\draw[->, out=45, in=135, thick] (2) to (4);

\end{tikzpicture}
& \begin{tikzpicture}[scale = 1, every label/.append style={text=blue, font=\footnotesize}, baseline=-.5ex]

\node (1) at (1,0) {$1$};
\node (2) at (2,0) {$2$};
\node (3) at (3,0) {$3$};
\node (4) at (4,0) {$4$};

\draw[->, thick] (1)--(2);
\draw[<-, red, thick] (2)--(3);
\draw[->, thick] (3)--(4);
\draw[<-,red, out=45, in=135, thick] (2) to (4);

\end{tikzpicture}
&\begin{tikzpicture}[scale = 1, every label/.append style={text=blue, font=\footnotesize}, baseline=-.5ex]

\node (1) at (1,0) {$1$};
\node  (2) at (2,0) {$2$};
\node (3) at (3,0) {$3$};
\node (4) at (4,0) {$4$};

\draw[->, thick] (1)--(2);
\draw[->, thick] (2)--(3);
\draw[<-, red, thick] (3)--(4);
\draw[<-,red, out=45, in=135, thick] (2) to (4);

\end{tikzpicture}  \\
\overline{G}_{4213,h} \arrow[u,equal]
& \overline{G}_{4132,h} \arrow[u,equal]
& \overline{G}_{3412,h} \arrow[u,equal]
& \overline{G}_{2341,h} \arrow[u,equal] \\
\begin{tikzpicture}[scale = 1, every label/.append style={text=blue, font=\footnotesize}, baseline=-.5ex]

\node (1) at (1,0) {$1$};
\node (2) at (2,0) {$2$};
\node (3) at (3,0) {$3$};
\node (4) at (4,0) {$4$};

\draw[<-, red, thick] (1)--(2);
\draw[<-,red, thick] (2)--(3);
\draw[->, thick] (3)--(4);
\draw[<-, red, out=45, in=135, thick] (2) to (4);

\end{tikzpicture}
& \begin{tikzpicture}[scale = 1, every label/.append style={text=blue, font=\footnotesize}, baseline=-.5ex]

\node (1) at (1,0) {$1$};
\node (2) at (2,0) {$2$};
\node (3) at (3,0) {$3$};
\node (4) at (4,0) {$4$};

\draw[<-, red, thick] (1)--(2);
\draw[->, thick] (2)--(3);
\draw[<-, red, thick] (3)--(4);
\draw[<-,red, out=45, in=135, thick] (2) to (4);

\end{tikzpicture}
& \begin{tikzpicture}[scale = 1, every label/.append style={text=blue, font=\footnotesize}, baseline=-.5ex]

\node (1) at (1,0) {$1$};
\node (2) at (2,0) {$2$};
\node (3) at (3,0) {$3$};
\node (4) at (4,0) {$4$};

\draw[->, thick] (1)--(2);
\draw[<-, red, thick] (2)--(3);
\draw[<-,red, thick] (3)--(4);
\draw[<-,red, out=45, in=135, thick] (2) to (4);

\end{tikzpicture}
&\begin{tikzpicture}[scale = 1, every label/.append style={text=blue, font=\footnotesize}, baseline=-.5ex]

\node (1) at (1,0) {$1$};
\node  (2) at (2,0) {$2$};
\node (3) at (3,0) {$3$};
\node (4) at (4,0) {$4$};

\draw[<-,red, thick] (1)--(2);
\draw[<-,red, thick] (2)--(3);
\draw[<-, red, thick] (3)--(4);
\draw[<-,red, out=45, in=135, thick] (2) to (4);

\end{tikzpicture}  \\
\overline{G}_{4312,h} \arrow[u,equal]
& \overline{G}_{4231,h} \arrow[u,equal]
& \overline{G}_{3421,h} \arrow[u,equal]
& \overline{G}_{4321,h} \arrow[u,equal] \\
\end{tikzcd}
\]
\end{example}

\begin{proposition} \label{prop:generator and acyclic orientation} Let $h\colon[n] \rightarrow [n]$ be a Hessenberg function. Then, we have
\[
\lvert \mathcal G_h ^k \rvert= \lvert \mathcal{O}_h ^k\rvert. 
\]
\end{proposition}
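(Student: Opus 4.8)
The plan is to prove the identity by exhibiting a grading-preserving bijection $o_h\colon \mathcal{G}_h \to \mathcal{O}_h$, namely $w \mapsto o_h(w)$, and then restricting it to each degree. The grading is easy to match: by definition the number of edges $i \leftarrow j$ with $i<j$ in $\overline{G}_{w,h}$ equals $\lvert\{(j,i)\mid j<i\le h(j),\, w(j)>w(i)\}\rvert=\ell_h(w)$, so $o_h(w)\in\mathcal{O}_h^k$ if and only if $\ell_h(w)=k$. Hence, once $o_h\colon\mathcal{G}_h\to\mathcal{O}_h$ is shown to be a bijection, it carries $\mathcal{G}_h^k$ bijectively onto $\mathcal{O}_h^k$ and the proposition follows. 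Note also that $\ell_h$ is constant on each $\sim_h$-class, so the content is really that $\mathcal{G}_h$ selects exactly one representative from each class.

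Fix $\theta\in\mathcal{O}_h$ and let $P_\theta$ be the partial order on $[n]$ given by the transitive closure of $\theta$. First I would record the elementary observation that the permutations $w$ with $o_h(w)=\theta$ are exactly the linear extensions of $P_\theta$, where $w$ is read as the order-preserving bijection $j\mapsto w(j)$: indeed $o_h(w)=\theta$ forces $w(x)<w(y)$ along every oriented edge, hence whenever $x<_{P_\theta}y$, and conversely any linear extension reproduces $\theta$ on each edge of $G_h$. Writing the listing $u_1,\dots,u_n$ with $u_r=w^{-1}(r)$, this says $u_1,\dots,u_n$ ranges over the linear extensions of $P_\theta$. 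Moreover the membership $w\in\mathcal{G}_h$ rewrites, value by value, as the condition that whenever $u_r<u_{r+1}$ one has $u_{r+1}\le h(u_r)$, i.e. $\{u_r,u_{r+1}\}$ is an edge of $G_h$ (the case $u_r>u_{r+1}$ being automatic since $u_{r+1}<u_r\le h(u_r)$). So the task reduces to: among the linear extensions of $P_\theta$, exactly one satisfies ``every consecutive ascent is an edge''.

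For existence I would construct the greedy linear extension that, at each step, appends the source (minimal remaining vertex) of largest label. If a consecutive pair has $u_r<u_{r+1}$, then $u_{r+1}$ was not a source at step $r$ (otherwise the rule, preferring larger labels, would have taken it instead of $u_r$), so $u_r$ is its unique remaining predecessor; a short path argument then shows $u_r\to u_{r+1}$ is a \emph{direct} edge of $\theta$, for any intermediate vertex on a longer path would itself be a remaining predecessor of $u_{r+1}$. Thus $\{u_r,u_{r+1}\}$ is an edge of $G_h$ and the greedy extension lies in $\mathcal{G}_h$; being a linear extension of $P_\theta$, its image under $o_h$ is $\theta$, giving surjectivity.

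The crux is uniqueness, and this is where I expect the main obstacle to lie, since it is exactly where the unit-interval (indifference) structure of $G_h$ must be used: if $u_c<s<u_{c+1}\le h(u_c)$ then also $s\le h(u_c)$, so $\{u_c,s\}$ is forced to be an edge. Concretely, suppose $w\in\mathcal{G}_h$ is a linear extension whose listing first deviates from greedy at step $a$, so some source $s$ of the remaining poset with $s>u_a$ is placed later, $s=u_b$ with $b>a$. Because $s$ is a source, it is incomparable in $P_\theta$ to each of $u_a,\dots,u_{b-1}$. Taking the largest $c<b$ with $u_c<s$ (which exists as $u_a<s$), the next vertex satisfies $u_{c+1}\ge s$, producing an ascent at rank $c$; the $\mathcal{G}_h$ condition together with the indifference property then forces $\{u_c,s\}$ to be an edge of $G_h$. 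But an edge of $G_h$ is oriented by $\theta$, contradicting the incomparability of $u_c$ and $s$. Hence no deviation occurs, the greedy extension is the unique member of $\mathcal{G}_h$ in its class, $o_h$ is injective, and the proposition is proved.
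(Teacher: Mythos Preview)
Your argument is correct, and it takes a genuinely different route from the paper. The paper's proof is indirect: it combines Shareshian--Wachs' Theorem~5.3 (relating the $e$-coefficients of $X_{G_h}(\mathbf{x},t)$ to counts of acyclic orientations by ascent number) with the Brosnan--Chow theorem and Proposition~\ref{prop:number of generators} to obtain $\sum_k t^k\,\lvert\mathcal{O}_h^k\rvert=\sum_k t^k\,\dim_{\C}H^{2k}(\Hess(S,h))^{\mathfrak{S}_n}=\sum_k t^k\,\lvert\mathcal{G}_h^k\rvert$. You instead give a direct, elementary bijection $o_h\colon\mathcal{G}_h\to\mathcal{O}_h$, bypassing all of the chromatic-quasisymmetric and geometric input. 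What is interesting is that the paper essentially rediscovers your surjectivity argument in the proof of Proposition~\ref{prop:type and orbit pre} and the subsequent remark (the greedy ``maximal source'' construction), but then \emph{invokes} the present proposition, with its indirect proof, to deduce injectivity of $\mathcal{G}_h\to\mathfrak{S}_n/\!\sim_h$. Your uniqueness argument---using the indifference property that $u_c<s<u_{c+1}\le h(u_c)$ forces $\{u_c,s\}$ to be an edge, contradicting the incomparability of $s$ with the earlier entries---closes this loop directly and makes the whole statement independent of both Theorem~\ref{thm:Brosnan-Chow} and the regular nilpotent comparison used in Proposition~\ref{prop:number of generators}. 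The paper's approach has the virtue of situating the count inside the broader story of chromatic quasisymmetric functions; yours is self-contained and would allow Propositions~\ref{prop:generator and acyclic orientation} and~\ref{prop:type and orbit pre} to be proved without circularity or external input.
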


\begin{proof}
For each $\lambda \vdash n$, let $c_{\lambda}^h(t)$ be the coefficient of the elementary symmetric function $e_{\lambda}$ in the $e$-basis expansion of the chromatic quasisymmetric function $X_{G_h}({\bf x},t)$ of the incomparability graph $G_h$ of $h$.
By Theorem~5.3 of \cite{SW},
\[
\sum_{\lambda \in Par(n,j)}c_{\lambda}^h(t) = \sum_{o \in \mathcal O(G_h,j)} t^{\mathrm{asc}(o)},
\]
where
$Par(n,j)$ is the set of partitions of $n$ of length $j$,
$\mathcal O(G_h,j)$ is the set of acyclic orientations of $G_h$ with $j$ sinks, and $\mathrm{asc}(o)$ is the number of directed edges $(a,b)$ of $o$ for which~$a <b$.

On the other hand, by Theorem~\ref{thm:Brosnan-Chow}, we have
 $\sum_{\lambda \vdash n} c_{\lambda}^h(t) M^{\lambda} = \sum_k t^k H^{2k}(\Hess(S,h))$  in the ring of $\mathfrak S_n$-representations,
 from which it follows that
\[
\sum_{\lambda \vdash n}  c_{\lambda}^h(t) =\sum_k t^k\dim_{\C} H^{2t}(\Hess (S,h))^{\mathfrak S_n}
\]
   because $\dim_{\C} (M^{\lambda})^{\mathfrak S_n}=1$ for any permutation module $M^{\lambda}$.
By Proposition \ref{prop:number of generators},
the right-hand side is equal to $\sum_k t^k \lvert \mathcal G_h^k\rvert$.
 Therefore, $\lvert \mathcal O_h^k \rvert= \lvert \mathcal G_h^k \rvert$.
\end{proof}

We recall the following lemma related to the graph type.
\begin{lemma}[{\cite[Lemma~4.4]{CHL}}]\label{lemma_graph_type}
Let $v$ be a permutation in $\mathfrak{S}_n$ and let $s_i$ be a simple reflection.
If $v \dasharrow s_iv$, then $G_{v,h} = G_{s_i v,h}$.
\end{lemma}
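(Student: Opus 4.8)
The plan is to translate the hypothesis $v \dasharrow s_i v$ into explicit combinatorial data about positions and values, and then observe that the single value-comparison which the reflection $s_i$ can possibly disturb is exactly the one ruled out by the non-edge condition.

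First I would unwind the notation. Writing $a \colonequals v^{-1}(i+1)$ and $b \colonequals v^{-1}(i)$, left multiplication by $s_i = s_{i,i+1}$ interchanges the values $i$ and $i+1$ in the one-line notation of $v$, so that $s_i v = v s_{a,b}$, with $(s_i v)(a)=i$ and $(s_i v)(b)=i+1$, while $(s_i v)(x)=v(x)$ for every $x \notin \{a,b\}$. The defining inequality $\ell(v) > \ell(s_i v)$ of the dashed arrow forces $a < b$, and the assertion that $(v \to s_i v)$ is \emph{not} an edge of the GKM graph means, by Theorem~\ref{thm_GKM} (where the edge $(v \to v s_{j,i})$ exists precisely when $j < i \le h(j)$), that $b > h(a)$.

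Next I would compare the two directed graphs arrow by arrow. By Definition~\ref{def_graph_Ghw}, for a pair of positions $p < q$ an arrow $p \to q$ lies in $G_{w,h}$ precisely when $q \le h(p)$ and $w(p) < w(q)$; hence the edge set of $G_{w,h}$ is determined by the signs of the differences $w(p)-w(q)$ ranging over the \emph{admissible} pairs $p < q \le h(p)$. Since $s_i v$ differs from $v$ only by swapping the values $i$ and $i+1$, and no integer lies strictly between $i$ and $i+1$, the sign of $v(p)-v(q)$ can differ from the sign of $(s_i v)(p)-(s_i v)(q)$ only when $\{v(p),v(q)\}=\{i,i+1\}$, that is, only for the single pair of positions $\{p,q\}=\{a,b\}$.

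Finally I would invoke the non-edge condition to conclude. The unique pair on which the value comparison can flip is $\{a,b\}$ with $a < b$, but $b > h(a)$ says precisely that this pair is \emph{not} admissible, so it contributes no arrow to either $G_{v,h}$ or $G_{s_i v,h}$. Every admissible pair therefore carries the same value comparison in $v$ and in $s_i v$, whence the two graphs have identical edge sets and $G_{v,h}=G_{s_i v,h}$. I expect the only delicate point to be the bookkeeping in the first step: correctly pinning down $a$ and $b$ and reading off $b > h(a)$ from the conventions for $\dasharrow$ and for the edge set in Theorem~\ref{thm_GKM}. Once that translation is in place the argument is immediate, since everything reduces to the elementary fact that swapping adjacent values $i,i+1$ alters exactly one comparison.
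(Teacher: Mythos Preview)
Your proof is correct. The paper does not actually prove this lemma; it is quoted from \cite[Lemma~4.4]{CHL} without argument, so there is nothing to compare against in the present paper. Your argument is the natural elementary one: the dashed-arrow hypothesis pins down the positions $a=v^{-1}(i+1)<b=v^{-1}(i)$ with $b>h(a)$, and since swapping the adjacent values $i,i+1$ can only flip the comparison at the single position-pair $(a,b)$, which is excluded from the edge set of $G_{\bullet,h}$ by $b>h(a)$, the two directed graphs coincide. The bookkeeping you flag (that $s_iv=vs_{a,b}$ and that the non-edge reads $b>h(a)$ via Theorem~\ref{thm_GKM}) is handled correctly.
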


For two permutations $v,w \in \mathfrak{S}_n$ such that $w = s_i v$ and $v \dasharrow w$, we denote by $v \stackrel{s_i}{\dasharrow} w$.

 \begin{proposition} \label{prop:type and orbit pre}
 For any $u \in \mathfrak S_n$, there exists a unique $w \in \mathcal G_h$ such that $G_{u,h} = G_{w,h}$. In this case,  we have
\[
w \stackrel{s_{i_1}}{\dasharrow} \cdots \stackrel{s_{i_r}}{\dasharrow} u
\]
and   $\sigma_{u,h} = s_{i_r} \cdots s_{i_1} \cdot \sigma_{w,h}$ for some simple reflections $s_{i_1}, \dots, s_{i_r}$.
 \end{proposition}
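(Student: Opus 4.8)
The plan is to establish existence by a length-increasing straightening procedure that simultaneously produces the chain and the cohomology identity, and to establish uniqueness by a counting argument.

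\textbf{Existence via a local move.} First I would show that from any $u$ one can reach an element of $\mathcal{G}_h$ by repeatedly applying a single move that strictly raises the length while fixing the graph type. Suppose $v \notin \mathcal{G}_h$. By the description of $\mathcal{G}_h$ recorded after Definition~\ref{def:generators}, there is a value $a \in [n-1]$ whose position $j = v^{-1}(a)$ and the position $i = v^{-1}(a+1)$ of $a+1$ satisfy $i > h(j)$; since $h(j)\geq j$ this forces $j < i$, so $\{j,i\}$ is a non-edge of $G_h$. Left multiplication by $s_a = s_{a,a+1}$ exchanges the values $a$ and $a+1$, so $(s_a v)^{-1}(a) = i$ and $(s_a v)^{-1}(a+1) = j$; as $v^{-1}(a) < v^{-1}(a+1)$ we get $\ell(s_a v) = \ell(v) + 1$, and because $v = (s_a v)\,s_{j,i}$ with $i > h(j)$, the pair $(s_a v \to v)$ is not an edge of the GKM graph. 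Hence $s_a v \stackrel{s_a}{\dasharrow} v$, and Lemma~\ref{lemma_graph_type} gives $G_{s_a v,h} = G_{v,h}$.

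\textbf{Chain and cohomology identity.} Iterating the move, the length strictly increases at each step and is bounded by $\binom{n}{2}$, so the procedure terminates, and it can only terminate at an element $w \in \mathcal{G}_h$. Reading the moves in reverse yields a chain
\[
w = v_0 \stackrel{s_{i_1}}{\dasharrow} v_1 \stackrel{s_{i_2}}{\dasharrow} \cdots \stackrel{s_{i_r}}{\dasharrow} v_r = u, \qquad v_t = s_{i_t} v_{t-1}.
\]
All the $v_t$ share the same graph type, so in particular $G_{w,h} = G_{u,h}$. For the cohomology statement I apply Proposition~\ref{prop:simple action}(1) at each step: since $v_{t-1} \dasharrow s_{i_t} v_{t-1} = v_t$, we obtain $\swh{v_t} = s_{i_t}\cdot \swh{v_{t-1}}$, and composing gives $\swh{u} = s_{i_r}\cdots s_{i_1}\cdot \swh{w}$.

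\textbf{Uniqueness via counting.} I would argue that $w \mapsto o_h(w)$ maps $\mathfrak{S}_n$ onto $\mathcal{O}_h$: given an acyclic orientation $o$ of $G_h$, its transitive closure is a strict partial order on $[n]$, and any linear extension yields a permutation $w$ with $o_h(w) = o$. Since two permutations have the same graph type exactly when they induce the same orientation, the $\sim_h$-classes are in bijection with $\mathcal{O}_h$, so there are $\lvert\mathcal{O}_h\rvert$ equivalence classes. On the other hand, summing Proposition~\ref{prop:generator and acyclic orientation} over $k$ gives $\lvert\mathcal{G}_h\rvert = \lvert\mathcal{O}_h\rvert$. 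The existence part shows that $w \mapsto [w]_h$ carries $\mathcal{G}_h$ onto the set of equivalence classes; a surjection between finite sets of equal cardinality is a bijection, so each class contains exactly one element of $\mathcal{G}_h$.

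The main obstacle I anticipate is uniqueness: the local move only shows that each class meets $\mathcal{G}_h$, and ruling out a second representative seems to require the enumerative coincidence $\lvert\mathcal{G}_h\rvert = \lvert\mathcal{O}_h\rvert$, together with the identification of $\sim_h$-classes with acyclic orientations, rather than a direct comparison of two elements of $\mathcal{G}_h$ having the same graph type. A secondary point needing care is verifying that the straightening move is genuinely a GKM non-edge and raises the length, so that both Lemma~\ref{lemma_graph_type} and Proposition~\ref{prop:simple action}(1) apply at every step.
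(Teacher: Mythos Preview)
Your proof is correct and follows essentially the same route as the paper's: the local move showing that any $v\notin\mathcal G_h$ admits $s_a v\dasharrow v$ with the same graph type is exactly the paper's argument that a Bruhat-maximal element of each $\sim_h$-class lies in $\mathcal G_h$, and your uniqueness step is the same counting argument via Proposition~\ref{prop:generator and acyclic orientation}. The only difference is cosmetic: to prove that $\mathfrak S_n/\!\sim_h\to\mathcal O_h$ is surjective you invoke a linear extension of the transitive closure, whereas the paper gives an explicit iterative ``maximal source'' construction (which has the side benefit, noted in the remark following the proof, of directly producing the $\mathcal G_h$-representative).
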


\begin{proof}
We claim that a maximal element with respect to the Bruhat order in each equivalence class $[ \,\,\,]_h$ is an element of $\mathcal G_h$.
Let $v \in \mathfrak S_n$ which is not an element of $\mathcal G_h$. Then there is $j<k$ with $v(j)=i$ and $v(k)=i+1$ and the vertex $j$ is not connected by an edge to the vertex~$k$ in $G_{v,h}$. Thus we have $s_iv \dasharrow v$     and $G_{s_iv,h} = G_{ v,h}$ by Lemma~\ref{lemma_graph_type}.  Therefore,  $v$ is not a maximal element with respect to Bruhat order in its equivalence class.
This completes the proof of the claim. By the claim, the map $\mathcal G_h \rightarrow \mathfrak S_n/\sim_h$ assigning $w \in \mathcal G_h$ to its equivalent class $[w]_h$ is surjective.

On the other hand, an equivalence class $[ w]_h$ induces an element $o_h(w)$ in $\mathcal O_h$
(see Definition~\ref{def_graph_Ghw}(2)).
By  a similar argument as in the proof of Proposition 4.1 of \cite{CH}, 
the map
\[
[w]_h \in \mathfrak S_n/\sim_h \,\, \mapsto \,\, o_h(w) \in \mathcal O_h
\]
  is surjective, whereas provide a proof for the convenience of the reader.

Given an acyclic orientation $o  $ on $G_h$,  we  define a directed graph $\Gamma_{\ell}$ with $ n-\ell+1  $ vertices and a vertex $\s_{\ell}  $ of $\Gamma_{\ell}$ for $\ell \in [n]$ inductively, as follows. For $\ell=1$, let $\Gamma_1$ be the graph $G_h$ with the acyclic orientation $o$ and define $\s_1$ by the maximal source of $o$.
Assume that we have defined $\Gamma_{\ell-1}$ and $\s_{\ell-1}$. Define $\Gamma_{\ell}$ by the directed graph obtained from $\Gamma_{\ell-1}$ by deleting $\s_{\ell-1}$ and every edge from $\s_{\ell-1}$, and define $\s_{\ell}$ by the maximal source of $\Gamma_{\ell}$.
Let $w \in \mathfrak S_n$ be  the permutation  defined by $w(\s_{\ell})=\ell$ for $\ell \in [n]$.

We now claim that $o_h(w) = o$. It is enough to show that $o_h(w)$ and $o$ have the same set of  directed edges $j \rightarrow i$ with $j<i$ to prove the claim.   To see this, let $\s_{\ell}$ and $\s_k$ ($\ell<k$) be  two vertices  which are connected by an edge in $G_h$. Then this edge is directed as  $\s_{\ell} \rightarrow \s_k$ in $o$ because $\s_{\ell}$ is a source in the directed graph $\Gamma_{\ell}$ which has $\s_{k}$ as a vertex.
Since we have $w(\s_{\ell})=\ell  < w(\s_{k})=k$, the edge $\{\s_{\ell}, \s_{k}\}$ is directed from $s_{\ell}$ to $ \s_{k}$ in $G_{w,h}$.   Therefore, $o_h(w)$ and $o$ have the same set of  directed edges $j \rightarrow i$ with $j<i$.
Consequently,  we get $o_h(w) =o$.

Therefore, the composition $\mathcal G_h \rightarrow \mathfrak S_n/\sim_h \rightarrow \mathcal O_h$ defines   a surjective map
$ \mathcal G_h \rightarrow   \mathcal O_h$.
By Proposition~\ref{prop:generator and acyclic orientation},   we have  $\lvert \mathcal G_h \rvert =\lvert \mathcal O_h \rvert$.
Consequently,
the map $\mathcal G_h \rightarrow \mathfrak S_n/\sim_h$ is injective.
It follows that   each equivalence class $[ \,\,\,]_h$ has a unique maximal element.

 Furthermore, the proof of the claim implies that for any element $u$ in the same equivalence class as $w \in \mathcal G_h$, there exist simple reflections $s_{i_1}, \dots, s_{i_r}$ such that $w \stackrel{s_{i_1}}{\dasharrow} \cdots \stackrel{s_{i_r}}{\dasharrow} u$.  Then, by Proposition \ref{prop:simple action}(1), we get $\sigma_{u,h} = s_{i_r} \cdots s_{i_1} \cdot \sigma_{w,h}$.
\end{proof}

\begin{remark}
In the proof of Proposition \ref{prop:type and orbit pre},  for a given   acyclic orientation $o$ on $G_h$, we assign a permutation $w$ with $o_h(w)=o$. In fact, such a permutation $w$  is an element of $\mathcal G_h$: Suppose that $\s_{\ell-1} < \s_{\ell}$ and there is no edge connecting $\s_{\ell-1}$ and $\s_{\ell}$ in $G_h$. Then $\s_{\ell}$ is a source in $\Gamma_{\ell-1}$ because  $\s_{\ell}$ is a source in~$\Gamma_{\ell}$ which is obtained from $\Gamma_{\ell-1}$ by removing $\s_{\ell-1}$ and all edges from~$\s_{\ell-1}$, contradicting to the condition that $\s_{\ell-1}$ is the maximal source of the directed graph~$\Gamma_{\ell-1}$. Therefore, if $\s_{\ell-1} < \s_{\ell}$, then there is an edge connecting $\s_{\ell-1}$ and $\s_{\ell}$ in $G_h$. In this case, the orientation is from $\s_{\ell-1}$ to $\s_{\ell}$ because $w(s_{\ell-1})=\ell-1 < w(\s_{\ell}) =\ell$.
\end{remark}

   For $w \in \mathcal G_h^k$, define a subset $P_{w,h}$ of $\mathfrak S_n$ by
\[
P_{w,h} \colonequals  \{ u \in \mathfrak S_n \mid G_{u,h} = G_{w,h}\} = [w]_h
\]
and  denote by $M(\sigma_{w,h})$ the $\mathfrak S_n$-module generated by $\sigma_{w,h} \in H^{2k}(\Hess(S,h))$. Then Proposition \ref{prop:type and orbit pre} can be rephrased as follows.

\begin{proposition} \label{prop:type and orbit}
For each $w \in \mathcal G_h^k$,  any element in $\{\sigma_{u,h} \mid u \in P_{w,h}\}$ is contained in~$M(\sigma_{w,h})$, and $\{P_{w,h} \mid w \in \mathcal G_h^k\}$  defines a partition on the set $\{\sigma_{u,h} \mid  u\in \mathfrak S_n, \ell_h(u) = k \}$, that is,
\[
\{ \sigma_{u,h} \mid u\in \mathfrak S_n, \ell_h(u) =k \} = \bigsqcup_{w \in \mathcal G_h^k} \{ \sigma_{u,h} \mid u \in P_{w,h}\}.
\]
\end{proposition}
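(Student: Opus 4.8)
The plan is to derive this proposition directly from Proposition~\ref{prop:type and orbit pre}, treating it genuinely as a rephrasing. First I would dispatch the containment assertion: for $w \in \mathcal G_h^k$ and any $u \in P_{w,h}$, we have $G_{u,h} = G_{w,h}$ by the definition of $P_{w,h}$, so Proposition~\ref{prop:type and orbit pre} supplies simple reflections with $\sigma_{u,h} = s_{i_r} \cdots s_{i_1} \cdot \sigma_{w,h}$. Since the right-hand side is the image of the generator $\sigma_{w,h}$ under the dot action, it lies in $M(\sigma_{w,h})$ by the definition of the module generated by $\sigma_{w,h}$. This gives $\{\sigma_{u,h} \mid u \in P_{w,h}\} \subseteq M(\sigma_{w,h})$ with no further work.

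For the partition statement, the key point I must establish is that $\ell_h$ is constant on each $\sim_h$-class. I would argue as follows: the datum $G_{u,h}$, together with the fixed incomparability graph $G_h$, determines $\overline{G}_{u,h}$, namely the acyclic orientation $o_h(u)$ of $G_h$ (Definition~\ref{def_graph_Ghw}(2)), because for every pair $(j,i)$ with $j < i \leq h(j)$ exactly one of $u(j) < u(i)$ or $u(j) > u(i)$ holds, so the edges of $G_h$ not recorded as forward edges $j \to i$ of $G_{u,h}$ are precisely the backward edges $j \leftarrow i$. Since $\ell_h(u) = \lvert\{(j,i) \mid j < i \leq h(j),\ u(j) > u(i)\}\rvert$ counts exactly these backward edges of $o_h(u)$, the equality $G_{u,h} = G_{w,h}$ forces $\ell_h(u) = \ell_h(w)$. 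Consequently, for $w \in \mathcal G_h^k$ every $u \in P_{w,h}$ satisfies $\ell_h(u) = k$, so $P_{w,h} \subseteq \{u \in \mathfrak S_n \mid \ell_h(u) = k\}$.

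With this in hand, the disjoint decomposition at the level of permutations is immediate. The sets $P_{w,h} = [w]_h$ are $\sim_h$-equivalence classes, hence pairwise equal or disjoint, and the existence-and-uniqueness clause of Proposition~\ref{prop:type and orbit pre} says each class meets $\mathcal G_h$ in exactly one element, which by the previous paragraph lies in $\mathcal G_h^k$ precisely when the common $\ell_h$-value of the class equals $k$. This sets up a bijection between $\mathcal G_h^k$ and the $\sim_h$-classes contained in $\{u \mid \ell_h(u) = k\}$: distinct $w, w' \in \mathcal G_h^k$ give disjoint classes (disjointness), and every $u$ with $\ell_h(u) = k$ lies in $P_{w,h}$ for its unique $\mathcal G_h$-representative $w$, which is thereby forced into $\mathcal G_h^k$ (covering). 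Hence $\{u \mid \ell_h(u) = k\} = \bigsqcup_{w \in \mathcal G_h^k} P_{w,h}$.

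Finally I would transfer this to cohomology classes: because $\{\sigma_{u,h}\}_{u \in \mathfrak S_n}$ is the BB basis of $H^\ast(\Hess(S,h))$ (Proposition~\ref{prop_basis}), the assignment $u \mapsto \sigma_{u,h}$ is a bijection onto a basis, so the partition of permutations transports verbatim to the claimed disjoint union of sets of classes. I do not anticipate a genuine obstacle here; the only step carrying mathematical content beyond Proposition~\ref{prop:type and orbit pre} is the constancy of $\ell_h$ on $\sim_h$-classes, which is the bridge between the graph-type equivalence and the cohomological grading, and that is the step I would write out with care.
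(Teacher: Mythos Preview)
Your proposal is correct and matches the paper's approach: the paper simply states that Proposition~\ref{prop:type and orbit} is a rephrasing of Proposition~\ref{prop:type and orbit pre} and gives no separate proof, so you are filling in exactly the details the paper leaves implicit. The one piece you add beyond the bare rephrasing---constancy of $\ell_h$ on $\sim_h$-classes---is correctly argued and is indeed the bridge needed to align the graph-type partition with the grading by $\ell_h$.
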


\begin{theorem} \label{thm:module generator}
The set $\{\sigma_{w,h} \mid w \in \mathcal G_h^k\}$ has the cardinality $\dim_{\C} H^{2k}(\Hess(S,h))^{\mathfrak S_n}$, and it generates the  $\mathfrak S_n$-module  $H^{2k}(\Hess(S,h))$, that is,
\[
H^{2k}(\Hess(S,h)) = \sum_{w \in \mathcal G_h^k} M(\sigma_{w,h}),
\]
where $M(\sigma_{w,h})$ denotes the $\mathfrak S_n$-module generated by $\sigma_{w,h}$.
\end{theorem}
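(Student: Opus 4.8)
The plan is to assemble the theorem directly from the three preparatory results, since at this point all the substantive work has already been done. First I would dispatch the cardinality assertion: it is exactly Proposition~\ref{prop:number of generators}, which gives $\lvert \mathcal G_h^k \rvert = \dim_{\C} H^{2k}(\Hess(S,h))^{\mathfrak S_n}$. Because distinct $w$ yield distinct BB basis elements by Proposition~\ref{prop_basis}, the set $\{\swh{w} \mid w \in \mathcal G_h^k\}$ has precisely this cardinality.

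For the module equality I would prove the two inclusions separately. The inclusion $\sum_{w \in \mathcal G_h^k} M(\swh{w}) \subseteq H^{2k}(\Hess(S,h))$ is the easy direction: each generator $\swh{w}$ with $w \in \mathcal G_h^k$ lies in $H^{2k}(\Hess(S,h))$ since $\ell_h(w) = k$ (Definition~\ref{def_swh_classes}), and the dot action preserves cohomological degree—it is induced by the substitution $(u\cdot\sigma)(v) = \sigma(u^{-1}v)(t_{u(1)},\dots,t_{u(n)})$, which is degree-preserving on $\bigoplus_v \C[t_1,\dots,t_n]$. Hence the whole submodule $M(\swh{w})$ remains in degree $2k$.

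For the reverse inclusion, recall from Proposition~\ref{prop_basis} that $\{\swh{u} \mid u \in \mathfrak S_n,\ \ell_h(u) = k\}$ is a $\C$-basis of $H^{2k}(\Hess(S,h))$. By Proposition~\ref{prop:type and orbit}, this index set partitions as $\bigsqcup_{w \in \mathcal G_h^k} P_{w,h}$, and for each $w \in \mathcal G_h^k$ every class $\swh{u}$ with $u \in P_{w,h}$ belongs to $M(\swh{w})$. Consequently every basis element of $H^{2k}(\Hess(S,h))$ lies in $\sum_{w \in \mathcal G_h^k} M(\swh{w})$, which forces $H^{2k}(\Hess(S,h)) \subseteq \sum_{w \in \mathcal G_h^k} M(\swh{w})$. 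Combining the two inclusions yields the asserted equality.

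The point worth flagging is that this theorem carries essentially no new difficulty: the real content is already packaged in Proposition~\ref{prop:type and orbit}—the bijection $\mathcal G_h \to \mathfrak S_n/\sim_h$ together with the relation $\swh{u} = s_{i_r}\cdots s_{i_1}\cdot\swh{w}$ for $u \in P_{w,h}$—and in the counting Proposition~\ref{prop:number of generators}. The only genuinely independent observation needed here is the degree-preservation of the dot action, which supplies the easy inclusion; the rest is bookkeeping on the partition of the BB basis by graph type.
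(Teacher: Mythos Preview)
Your proof is correct and follows essentially the same approach as the paper: cardinality from Proposition~\ref{prop:number of generators}, and generation from Proposition~\ref{prop:type and orbit} together with the fact that the BB classes form a $\C$-basis. The paper's proof is terser—it does not bother to spell out the degree-preservation of the dot action or the injectivity of $w\mapsto\swh{w}$—but the logic is identical.
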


\begin{proof}  The first statement follows from Proposition  \ref{prop:number of generators}.
By Proposition \ref{prop:type and orbit}, for any $u \in \mathfrak S_n$, there is $w \in \mathcal G_h$ with $\sigma_{u,h} \in M(\sigma_{w,h})$.
Since $\{\sigma_{u,h} \mid u \in \mathfrak S_n\}$ is a  $\mathbb C$-basis of $H^*(\Hess(S,h))$, the set $\{\sigma_{w,h} \mid w \in \mathcal G_h\}$ generates the  $\mathfrak S_n$-module  $H^*(\Hess(S,h))$.
\end{proof}

We recall the definition of permutation modules of the symmetric group $\mathfrak S_n$.
A \emph{composition}~$\alpha=(\alpha_1, \dots, \alpha_\ell)$ of $n$ is a
sequence of positive integers such that $\sum_i^\ell \alpha_i =n$. For a
composition~$\alpha=(\alpha_1, \dots, \alpha_\ell)$ of $n$, we let $\mathfrak S_\alpha$ be the \emph{Young subgroup} of $\mathfrak S_n$ defined as
\[
\mathfrak S_\alpha=\mathfrak S_{\{1, \dots, \alpha_1 \}}\times\mathfrak
S_{\{\alpha_1+1, \dots, \alpha_1+\alpha_2 \}}\times \cdots \times \mathfrak
S_{\{n-\alpha_\ell+1, \dots, n\}},
\]
and let $M^\alpha$ be the \emph{permutation module} of $\mathfrak S_n$ associated to $\alpha$ defined as the induced module~$1 \!\! \uparrow_{\mathfrak S_\alpha}^{\mathfrak S_n}$. Indeed, $M^\alpha$ is isomorphic to $M^\lambda$ for a partition $\lambda$ obtained by rearranging the parts of~$\alpha$ in nonincreasing order.

We remark that the sum $\sum_{w \in \mathcal G_h^k}  M(\sigma_{w,h})$ is generally not a direct sum, and the question is whether we can reduce modules $M(\sigma_{w,h})$ so that we get a direct sum decomposition into permutation modules, as in Conjecture \ref{conj:permutation module decomposition_intro}.  A natural question  related to this conjecture is the following.

\begin{question}\label{question}
Let $w \in \mathcal G_h$.
\begin{enumerate}
\item Is the stabilizer $\Stab_{\mathfrak S_n}(\sigma_{w,h})$ of $\sigma_{w,h}$ in $\mathfrak S_n$ a Young subgroup?
\item Is the $\mathfrak S_n$-module $M(\sigma_{w,h})$ generated by $\sigma_{w,h}$ a permutation module?
\end{enumerate}
\end{question}

For $w \in \mathfrak{S}_n$,
let ${\bf J}_{w,h}$ be the subset of $[n-1]$
consisting of $i \in [n-1]$ 
such that $w \dasharrow s_i w$, $s_iw \dasharrow w$, or
\[
w \rightarrow s_iw \quad \text{ and } \quad \{ u \in \Omega_{s_iw,h}^T \cap \Omega_{w}^T \mid u \dasharrow s_iu   \text{ and }\dim_{\C} (\Omega_u^{\circ} \cap \Omega_{s_iw,h}) = \dim_{\C} \Omega_{w,h}\}\neq \emptyset.
\]
Then    the Young subgroup 
\[
\mathfrak S_{{\alpha}_{w}} = 
\langle s_i \mid i \notin {\bf J}_{w,h} \rangle
\] 
stabilizes $\sigma_{w,h}$ by Proposition~\ref{prop:simple action}. We expect that $\mathfrak S_{{\alpha}_{w}}$ is   indeed the stabilizer $\Stab_{\mathfrak S_n}(\sigma_{w,h})$, proving that  $\Stab_{\mathfrak S_n}(\sigma_{w,h})$ is a Young subgroup.

{Question~\ref{question}}(2) is trickier than {Question~\ref{question}}(1) as the following example shows.

\begin{example}
Let $h=(2,4,4,4)$ and $w=3412$. Then ${\bf J}_{w,h}$ is $\{2\}$ and the stabilizer $\Stab_{\mathfrak S_4}(\sigma_{w,h})$ is the Young subgroup $\mathfrak S_{(2,2)}$ but the $\mathfrak S_n$-module $M(\sigma_{w,h})$ generated by $\sigma_{w,h}$ is the permutation module $M^{(3,1)}$ of type $(3,1)$. More precisely, the set $\{u \cdot \sigma_{w,h} \mid u \in \mathfrak S_4\}$ is given by

\[
		\begin{tikzcd}[column sep = 1em]
		\sigma_{3412,h} \arrow[r, "s_2"]
		& \sigma_{2413,h}  \arrow[r, "s_1"] \arrow[d, "s_3"]
		& \sigma_{1423,h}   \arrow[d, "s_3"] &\\
		& 
		\sigma_{2413,h}
		\arrow[r, "s_1"]
		& \sigma_{1423,h} \arrow[r, "s_2"]
		&\sigma _{1423,h} \\ [-2em]
		& +  (\sigma_{4213,h}  -  \sigma_{3214,h})  & +(\sigma_{4213,h} -\sigma_{3214,h})  &
+ \sigma_{4213,h}   -(\sigma_{3214,h} +\sigma_{3412,h} -\sigma_{2413,h}). 
		\end{tikzcd}
		\]
From this, we see that six elements
\[
\sigma_{w,h}, \quad s_2 \cdot\sigma_{w,h}, \quad s_1s_2  \cdot \sigma_{w,h},\quad  s_3s_2  \cdot \sigma_{w,h},\quad s_3s_1s_2 \cdot \sigma_{w,h},\quad s_2 s_3s_1s_2  \cdot \sigma_{w,h}
\] 
are all distinct but they span a $4$-dimensional vector space  in $H^4(\Hess(S,h))$.
Indeed, there are two linear relations:
\[
s_3s_1s_2  \cdot \swh{w} = s_3s_2  \cdot \swh{w} - s_2  \cdot \swh{w} + s_1s_2  \cdot \swh{w}, \quad
s_2s_3s_1s_2 \cdot  \swh{w}
= - \swh{w} + s_3s_2 \cdot \swh{w} + s_1s_2 \cdot \swh{w}.
\]

\end{example}

We will give an affirmative answer to Question~\ref{question} when $w \in \mathcal G_h^1$ in Propositions~\ref{prop:stabilizer_12},~\ref{prop:stabilizer_34}, and Theorem~\ref{thm_H2}. Also, see Remark~\ref{rmk_question}.



\section{\texorpdfstring{$\mathfrak{S}_n$}{Sn}-action on generators}\label{sec:generators of H2}
Recall from Definition~\ref{def:generators} and Theorem~\ref{thm:module generator} that the  set $\mathcal{G}_h^1$ of permutations whose corresponding classes form a generator set of $H^2(\Hess(S,h))$ is given as follows
\[ \mathcal{G}_h^1 =  \{w\in \mathfrak S_n \mid \ell_h(w)=1 \text{ and } w^{-1}(w(j)+1)\leq h(j) \text{ for } w(j)\in [n-1] \}.\]
In this section, we determine all the permutations in $\mathcal{G}_h^1$ and consider the stabilizer subgroups of their corresponding classes.
We first define a permutation $\wi{i}\in \mathcal{G}_h^1$ for each $i=1, \dots, n-1$. Before introducing the definition, we prepare one terminology.
\begin{definition}\label{def_of_T}
For a Hessenberg function $h \colon [n] \to [n]$, we define
\[
T = T_h \colonequals  \{i \in [n-1] \mid h(i-1) > (i-1)+1 = i\} \subset [n-1].
\]
Here, we set $h(0) = 2$.
\end{definition}
We note that $1 \in T$ always holds because $h(0) = 2 > 1$.
\begin{remark}
For each element $i$ in $T$, we construct a \emph{trivial}
representation (see Theorems~\ref{thm:Chow} and~\ref{thm_H2}).
The notation $T$ stands for the word `trivial'.
\end{remark}

For a permutation $w \in \mathfrak{S}_n$, we say that $i \in [n-1]$ is a \emph{descent} of $w$ if $w(i) > w(i+1)$. Let $\Des(w)$ be the set of descents of $w$, and let $\des(w) = |\Des(w)|$ be the number of descents of~$w$.

\begin{definition}\label{def:permutations}
Let $h \colon [n] \rightarrow [n]$ be a Hessenberg function. We define $\wi{i} \in\mathfrak S_n $ for $i=1, 2, \dots, n-1$ according to the values of $h$ as follows. We place $\vred$ between the $i$th and $(i+1)$st values of $\wi{i}$, and provide the graph $\overline{G}_{\wi{i},h}$ below the definition of $\wi{i}$ for each case.
Let $T = T_h$.

\begin{enumerate}
\item Suppose that $i \in T$.
\begin{enumerate}
\item If $i+1 \in T \cup \{n\}$, then we let
\[ \wi{i} \colonequals 1\, 2\,\cdots i-1 \,\,i+1\, \vred \, i \,\, i+2\,\cdots\,\, n = s_i.
\]
\begin{center}
\begin{tikzpicture}[scale = 1.5, every label/.append style={text=blue, font=\footnotesize}]
\tikzstyle{v} = [ ]
\node[v, label={below:{$1$}}] (1) at (1,0) {$1$};
\node[v, label={below:{$2$}}] (2) at (2,0) {$2$};
\node (3) at (3,0) {$\cdots$};
\node[v, label={below:{$i-1$}}] (4) at (4,0) { $i-1$};
\node[v, label={below:{$i+1$}}] (5) at (5,0) {$i$};
\node[v, label={below:{$i$}}] (6) at (6,0) { $i+1$};
\node[v, label={below:{$i+2$}}] (7) at (7,0) { $i+2$};
\node (8) at (8,0) {$\cdots$};
\node[v, label={below:{$n$}}] (9) at (9,0) {$n$};

\draw[->, thick] (1) to (2);
\draw[->, thick] (2) to (3);
\draw[->, thick] (3) to (4);
\draw[->, thick] (4) to (5);
\draw[<-, red, thick] (5) to (6);
\draw[->, thick] (6) to (7);
\draw[->, thick] (7) to (8);
\draw[->, thick] (8) to (9);

\draw[->, out=45, in=135, thick] (5) to (7);
\draw[->, out=45, in=135, thick] (4) to (6);
\end{tikzpicture}	

\end{center}
\item If $i+1 \notin T \cup \{n\}$, then we let
\[
\wi{i} \colonequals  1\,2\,\cdots i-1\,\, n \,\vred \, i\,\, i+1 \, \cdots \, n-1.
\]
\begin{center}
\begin{tikzpicture}[scale = 1.5, every label/.append style={text=blue, font=\footnotesize}]
\tikzstyle{v} = [ ]
\node[v, label={below:{$1$}}] (1) at (1,0) {$1$};
\node[v, label={below:{$2$}}] (2) at (2,0) {$2$};
\node (3) at (3,0) {$\cdots$};
\node[v, label={below:{$i-1$}}] (4) at (4,0) { $i-1$};
\node[v, label={below:{$n$}}] (5) at (5,0) {$i$};
\node[v, label={below:{$i$}}] (6) at (6,0) { $i+1$};
\node[v, label={below:{$i+1$}}] (7) at (7,0) {$i+2$};
\node (8) at (8,0) {$\cdots$};
\node[v, label={below:{$n-1$}}] (9) at (9,0) {$n$};

\draw[->, thick] (1) to (2);
\draw[->, thick] (2) to (3);
\draw[->, thick] (3) to (4);
\draw[->, thick] (4) to (5);
\draw[<-, red, thick] (5) to (6);
\draw[->, thick] (6) to (7);
\draw[->, thick] (7) to (8);
\draw[->, thick] (8) to (9);

\draw[->, out=45, in=135, thick] (4) to (6);
\draw[ out=45, in=135, gray, dashed] (5) to node[midway, above] {$\not\exists$} (7);
\end{tikzpicture}	
\end{center}
\end{enumerate}

\item Suppose that $i \notin T$.
\begin{enumerate}
\item If $i+1 \in T \cup \{n\}$, then we let
\[ \wi{i} \colonequals  2\,\cdots \,i \,\,i+1\,\vred\, 1\,\,i+2 \, \cdots \, n \,\,.\]

\begin{center}
\begin{tikzpicture}[scale = 1.5, every label/.append style={text=blue, font=\footnotesize}]
\tikzstyle{v} = [ ]
\node[v, label={below:{$2$}}] (1) at (1,0) {$1$};
\node (2) at (2,0) {$\cdots$};
\node[v, label={below:{$i$}}] (3) at (3,0) { $i-1$};
\node[v, label={below:{$i+1$}}] (4) at (4,0) {$i$};
\node[v, label={below:{$1$}}] (5) at (5,0) { $i+1$};
\node[v, label={below:{$i+2$}}] (6) at (6,0) { $i+2$};
\node (7) at (7,0) {$\cdots$};
\node[v, label={below:{$n$}}] (8) at (8,0) {$n$};

\draw[->, thick] (1) to (2);
\draw[->, thick] (2) to (3);
\draw[->, thick] (3) to (4);
\draw[<-, red, thick] (4) to (5);
\draw[->, thick] (5) to (6);
\draw[->, thick] (6) to (7);
\draw[->, thick] (7) to (8);

\draw[->, out=45, in=135, thick] (4) to (6);

\draw[ out=45, in=135, gray, dashed] (3) to node[midway, above] {$\not\exists$} (5);

\end{tikzpicture}	

\begin{tikzpicture}[scale = 1.5, every label/.append style={text=blue, font=\footnotesize}]
\tikzstyle{v} = [ ]

\node[v, label={below:{$2$}}] (1) at (1,0) {$1$};
\node (2) at (2,0) {$\cdots$};
\node[v, label={below:{$n-1$}}] (3) at (3,0) { $n-2$};
\node[v, label={below:{$n$}}] (4) at (4,0) {$n-1$};
\node[v, label={below:{$1$}}] (5) at (5,0) { $n$};

\draw[->, thick] (1) to (2);
\draw[->, thick] (2) to (3);
\draw[->, thick] (3) to (4);
\draw[<-, red, thick] (4) to (5);

\draw[ out=45, in=135, gray, dashed] (3) to node[midway, above] {$\not\exists$} (5);

\end{tikzpicture}	

\end{center}

\item If $i+1 \notin T \cup \{n\}$, then we let
\[ \wi{i} \colonequals  n-i+1\,\cdots \,n-1\,\,n\,\vred\, 1\,\, 2 \, \cdots \, n-i \,\,.\]
\begin{center}
\begin{tikzpicture}[scale = 1.5, every label/.append style={text=blue, font=\footnotesize}]
\tikzstyle{v} = [ ]

\node[v, label={below:{$n-i+1$}}] (1) at (1,0) {$1$};
\node (2) at (2,0) {$\cdots$};
\node[v, label={below:{$n-1$}}] (3) at (3,0) { $i-1$};
\node[v, label={below:{$n$}}] (4) at (4,0) {$i$};
\node[v, label={below:{$1$}}] (5) at (5,0) { $i+1$};
\node[v, label={below:{$2$}}] (6) at (6,0) { $i+2$};
\node (7) at (7,0) {$\cdots$};
\node[v, label={below:{$n-i$}}] (8) at (8,0) {$n$};

\draw[->, thick] (1) to (2);
\draw[->, thick] (2) to (3);
\draw[->, thick] (3) to (4);
\draw[<-, red, thick] (4) to (5);
\draw[->, thick] (5) to (6);
\draw[->, thick] (6) to (7);
\draw[->, thick] (7) to (8);

\draw[ out=45, in=135, gray, dashed] (4) to node[midway, above] {$\not\exists$} (6);

\draw[ out=45, in=135, gray, dashed] (3) to node[midway, above] {$\not\exists$} (5);

\end{tikzpicture}

\end{center}
\end{enumerate}
\end{enumerate}
Here, for each vertex $j$ in the graph $\overline{G}_{\wi{i},h}$, we write the value $\wi{i}(j)$ in blue below~$j$.
\end{definition}

\begin{lemma}\label{lemma:acyclic orientation_in_Gh1} The number of acyclic orientations that have exactly one directed edge from a vertex~$j$ to a vertex $i$ with $i<j$ is at most $n-1$.
\end{lemma}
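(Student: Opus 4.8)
The plan is to reinterpret the quantity in question as $\lvert \mathcal{O}_h^1 \rvert$, the number of acyclic orientations of the incomparability graph $G_h$ lying in $\mathcal{O}_h^1$ (Definition~\ref{def_graph_Ghw}(3)), and to bound it by locating the unique descending edge. An orientation with exactly one edge directed from a larger vertex $j$ to a smaller vertex $i$ has all of its remaining edges oriented from the smaller endpoint to the larger one; hence such an orientation is completely determined by the single edge $\{i,j\}$ (with $i<j$) that is oriented downward. The whole argument then reduces to showing that, for the orientation to be acyclic, this downward edge must join consecutive integers, i.e.\ $j=i+1$. Since there are exactly $n-1$ edges of the form $\{i,i+1\}$, the bound $\lvert \mathcal{O}_h^1 \rvert \le n-1$ follows at once.

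To carry this out I would first record the interval (unit interval order) property of $G_h$: since $h$ is weakly increasing and $h(c)\ge c+1$ for $1\le c\le n-1$, every consecutive pair $\{c,c+1\}$ is an edge of $G_h$, and more generally, whenever $\{a,b\}$ is an edge with $a<b$, every pair $\{a,c\}$ and $\{c,b\}$ with $a<c<b$ is also an edge. In particular, the chain $a \to a+1 \to \cdots \to b$ consists entirely of edges of $G_h$.

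Next I would argue by contradiction. Suppose an acyclic orientation $o\in\mathcal{O}_h^1$ has its unique downward edge joining $a<b$ with $b\ge a+2$. Every edge other than $\{a,b\}$ is oriented upward, so in particular each consecutive edge $\{c,c+1\}$ with $a\le c\le b-1$ (which is distinct from $\{a,b\}$ because $b\ge a+2$) is oriented $c\to c+1$. These edges form an upward directed path $a\to a+1\to\cdots\to b$, which together with the downward edge $b\to a$ yields a directed cycle in $o$, contradicting acyclicity. Hence $b=a+1$, so the unique downward edge of any $o\in\mathcal{O}_h^1$ is of the form $\{i,i+1\}$ for some $i\in[n-1]$, and $o$ is determined by $i$; therefore $\lvert \mathcal{O}_h^1\rvert \le n-1$.

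The main obstacle is precisely this contradiction step, where the unit interval structure of $G_h$ — the existence of the full chain of consecutive edges spanning any longer edge — is indispensable: without it, a long downward edge need not close up into a directed cycle, and the count of admissible downward edges could exceed $n-1$. I note that in fact each choice $\{i,i+1\}$ does yield an acyclic orientation, since the only possible upward path from $i$ to $i+1$ is the edge itself, which here points downward; thus equality $\lvert \mathcal{O}_h^1\rvert = n-1$ holds, although only the stated upper bound is required.
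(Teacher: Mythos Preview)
Your proof is correct and follows essentially the same approach as the paper: both arguments observe that if the unique descending edge $j\to i$ has $j>i+1$, then the upward chain $i\to i+1\to\cdots\to j$ (which exists because consecutive pairs are always edges of $G_h$) closes up with $j\to i$ into a directed cycle, forcing $j=i+1$ and hence at most $n-1$ choices. Your write-up is more detailed in making explicit that the orientation is determined by its single descending edge and in noting that equality actually holds, but the core idea is the same.
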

\begin{proof}
If $j \rightarrow i$ with $i+1<j$ is an edge in an orientation of $G_h$, then the directed edges
\[
i\rightarrow i+1,\quad \dots, \quad
j-1 \rightarrow j,\quad j \rightarrow i
\]
form a cycle. Therefore, a unique edge from $j$ to $i$ where $i<j$ must occur only when $j=i+1$.
\end{proof}

It is easy to check that the permutations $\wi{i}$, $i=1,  \dots, n-1$, are all distinct and contained in~$\mathcal{G}_h^1$.
We have $\lvert\mathcal G_h^1 \rvert = \lvert \mathcal O_h^1 \rvert$ by Proposition~\ref{prop:generator and acyclic orientation} and combining this with the previous lemma, we obtain the following proposition.

\begin{proposition}\label{prop:wi_elements_in_Gh1} For a Hessenberg function $h \colon [n] \to [n]$, we have
	\[
	\Ghone = \{ \wi{i} \mid i \in [n-1]\}.
	\]
\end{proposition}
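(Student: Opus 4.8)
The plan is a two-sided counting argument: exhibit $n-1$ distinct elements of $\Ghone$, bound $\lvert \Ghone\rvert$ from above by $n-1$, and conclude that the exhibited elements exhaust $\Ghone$.

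First I would verify the lower bound by checking that the $n-1$ permutations $\wi{1}, \dots, \wi{n-1}$ of Definition~\ref{def:permutations} are pairwise distinct and all lie in $\Ghone$. Distinctness is immediate once one observes that the unique $h$-inversion of $\wi{i}$ is carried by the single reversed (red) edge of $\overline{G}_{\wi{i},h}$, which in every one of the four cases joins the consecutive vertices $i$ and $i+1$; the location of this edge therefore pins down $i$, so different indices give different permutations. For membership, I would read off from the explicit one-line notation in each case that (i) $\ell_h(\wi{i}) = 1$, since $\overline{G}_{\wi{i},h}$ has exactly one edge directed from a larger vertex to a smaller one (the red edge), and (ii) the defining inequality $(\wi{i})^{-1}(\wi{i}(j)+1) \le h(j)$ holds whenever $\wi{i}(j) \in [n-1]$ — equivalently, using the reformulation recorded just after Definition~\ref{def:generators}, that no value $v$ and its successor $v+1$ occupy positions $j$ and $i'$ with $i' > h(j)$. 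The $\not\exists$ annotations in the diagrams mark precisely the relevant absent edges of $G_h$ and make this second check transparent. This yields $\{\wi{i} \mid i \in [n-1]\} \subseteq \Ghone$ with all $n-1$ elements distinct, hence $\lvert \Ghone\rvert \ge n-1$.

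Second, I would obtain the matching upper bound from the results already in hand. By Proposition~\ref{prop:generator and acyclic orientation}, $\lvert \Ghone\rvert = \lvert \mathcal{O}_h^1\rvert$, and $\mathcal{O}_h^1$ is by definition the set of acyclic orientations of $G_h$ having exactly one backward edge $i \leftarrow j$ with $i < j$. Lemma~\ref{lemma:acyclic orientation_in_Gh1} bounds the cardinality of exactly this set by $n-1$, because acyclicity forces the single backward edge to join consecutive vertices $j = i+1$, and there are only $n-1$ such positions. Chaining the three relations gives $n-1 \le \lvert \Ghone\rvert = \lvert \mathcal{O}_h^1\rvert \le n-1$, so $\lvert \Ghone\rvert = n-1$. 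Since $\{\wi{i} \mid i \in [n-1]\}$ is an $(n-1)$-element subset of the $(n-1)$-element set $\Ghone$, the inclusion is an equality, proving $\Ghone = \{\wi{i} \mid i \in [n-1]\}$.

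I expect the only laborious step to be the case-by-case membership verification in the first paragraph: there are four mutually exclusive cases in Definition~\ref{def:permutations} (according to whether $i \in T$ and whether $i+1 \in T \cup \{n\}$), and in each one must confirm both the length condition $\ell_h(\wi{i}) = 1$ and the generator inequality directly from the one-line notation and the pattern of present and absent edges. None of this is conceptually deep — the argument is essentially a counting sandwich — but it is where an oversight could enter, so I would treat the $\not\exists$ labels as precise assertions to be checked rather than as mere illustration.
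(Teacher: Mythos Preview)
Your proposal is correct and follows exactly the paper's approach: the paper remarks that the $\wi{i}$ are easily seen to be distinct elements of $\Ghone$, invokes Proposition~\ref{prop:generator and acyclic orientation} for $\lvert \Ghone\rvert = \lvert \mathcal{O}_h^1\rvert$, and combines this with Lemma~\ref{lemma:acyclic orientation_in_Gh1} to force equality by the same counting sandwich. Your write-up is in fact more detailed than the paper's one-sentence justification of the membership and distinctness step.
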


We now consider the $\mathfrak S_n$-action on the  generators $\swh{\wi{i}}$ of $H^2(\Hess(S,h))$ to analyze their stabilizer subgroup. Recall that we can use Proposition~\ref{prop:simple action} to compute the action of simple reflections, while we need to understand the set $\mathcal{A}_{s_i,w}$ when $w=s_i$ for the explicit computation.

We denote by $\mathcal{A}_i$ the set $\mathcal A_{s_i,s_i}$ of permutations that is used to describe
the  class $s_i \cdot \swh{s_i}$ in Proposition~\ref{prop:simple action}.
By the definition of $\mathcal{T}_u$, we have
\begin{equation}\label{eq_Tu}
\mathcal{T}_u = \overline{\Owo{u} \cap \Owh{s_i \cdot s_i}} =
\overline{\Owo{u} \cap \Owh{e}} =
\overline{\Owo{u} \cap \Hess(S,h)} = \Owh{u}.
\end{equation}
Here, $e$ is the identity element in $\mathfrak{S}_n$.
Accordingly, we get
\begin{equation}\label{eq_si_action_modified}
 \left(s_i\cdot \swh{s_i}  + \sum_{u \in \mathcal{A}_i } \swh{s_iu}  \right) =
\swh{s_i}  + \sum_{u \in \mathcal{A}_i } \swh{u}. 
\end{equation}
Furthermore, the intersection $\mathcal A_i  \cap s_i\mathcal A_i $ is empty by Proposition~\ref{prop:simple action}.
As a direct consequence of~\eqref{eq_si_action_modified}, we obtain the following
lemma.
\begin{lemma} \label{lem: empty implies stabilize}
	For $1 \leq i \leq n-1$, the set $\mathcal{A}_i$ is empty if and only if
	$s_i \cdot \swh{s_i} = \swh{s_i}$,  that is, the $s_i$-action stabilizes the
	class $\swh{s_i}$.
\end{lemma}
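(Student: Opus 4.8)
The plan is to read everything off the single identity~\eqref{eq_si_action_modified}, together with the disjointness $\mathcal{A}_i \cap s_i\mathcal{A}_i = \emptyset$ and the fact that the BB classes form a basis (Proposition~\ref{prop_basis}). The forward implication is immediate: if $\mathcal{A}_i = \emptyset$, both sums in~\eqref{eq_si_action_modified} vanish and the equation collapses to $s_i \cdot \swh{s_i} = \swh{s_i}$.

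For the converse I would assume $s_i \cdot \swh{s_i} = \swh{s_i}$, substitute this into~\eqref{eq_si_action_modified}, and cancel the common term $\swh{s_i}$ from both sides to obtain
\[
\sum_{u \in \mathcal{A}_i} \swh{s_i u} = \sum_{u \in \mathcal{A}_i} \swh{u}.
\]
Because left multiplication by $s_i$ is a bijection of $\mathfrak{S}_n$, the indices $s_i u$ ($u \in \mathcal{A}_i$) are pairwise distinct, so each side of this identity is a sum of distinct BB classes, all with coefficient $1$. Invoking the linear independence of the BB basis (Proposition~\ref{prop_basis}), equality of the two formal sums forces equality of the underlying index sets, that is, $s_i \mathcal{A}_i = \mathcal{A}_i$. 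Combining this with $\mathcal{A}_i \cap s_i \mathcal{A}_i = \emptyset$ then gives $\mathcal{A}_i = \mathcal{A}_i \cap s_i\mathcal{A}_i = \emptyset$, as desired.

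This is essentially a bookkeeping argument with the BB basis, so I do not anticipate a genuine obstacle. The one step that deserves explicit justification is the passage from an equality of formal sums of basis elements to an equality of their index sets: it relies both on the linear independence of $\{\swh{w}\}_{w \in \mathfrak{S}_n}$ and on the observation that, thanks to the injectivity of $u \mapsto s_i u$ and to $\mathcal{A}_i$ being an honest set, each BB class that occurs does so with multiplicity exactly one. Everything else is the direct use of~\eqref{eq_si_action_modified} and the disjointness already recorded there.
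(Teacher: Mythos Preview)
Your proof is correct and follows the same approach as the paper. The only cosmetic difference is that the paper argues the converse by contrapositive (if $\mathcal{A}_i\neq\emptyset$ then, using disjointness and the basis property, the difference of the two sums is nonzero), whereas you argue directly, deducing $s_i\mathcal{A}_i=\mathcal{A}_i$ and then combining with disjointness; the ingredients and logic are identical.
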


\begin{proof} By~\eqref{eq_si_action_modified}, if the set $\mathcal{A}_i $ is empty, then   we obtain $s_i \cdot \swh{s_i} = \swh{s_i}$.
Suppose that the set~$\mathcal A_i$ is nonempty. Since the intersection  $\mathcal A_i  \cap s_i \mathcal A_i $ is  empty and the classes $\{ \swh{u}\}$ form a basis by Proposition \ref{prop_basis}, the sum
\[
  \sum_{u \in \mathcal{A}_i } \swh{s_iu}  -
  \sum_{u \in \mathcal{A}_i } \swh{u} = 
\sum_{v \in s_i \mathcal A_i} \swh{v} - \sum_{u \in \mathcal A_i} \swh{u}
\]
  is nonzero. Therefore, we get  $s_i \cdot \swh{s_i} \neq  \swh{s_i}$.
\end{proof}

\begin{proposition}\label{proposition_Ai}
For $i \in [n-1]$, we have
\[
\mathcal{A}_i = \{ u \in \mathfrak{S}_n \mid u^{-1}(i+1) \leq i, ~~ u^{-1}(i) >
i, ~~ h(u^{-1}(i+1)) < u^{-1}(i), ~~\ell_h(u) = 1\}.
\]
\end{proposition}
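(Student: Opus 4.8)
The plan is to compute $\mathcal{A}_i = \mathcal{A}_{s_i,s_i}$ directly from the definition of $\mathcal{A}_{s_i,w}$, using $s_i s_i = e$, and then to convert each of its three defining conditions into a statement about the one-line notation of $u$. First I would record that $\Owho{e}$ is the dense minus cell (its dimension is $\dim \Hess(S,h)$ and $\Hess(S,h)$ is irreducible), so $\Owh{e} = \Hess(S,h)$ and $\Owh{e}^T = \mathfrak{S}_n$; hence the ambient requirement $u \in \Owh{e}^T \cap \Ow{s_i}^T$ collapses to $u \in \Ow{s_i}^T$, i.e.\ $u \ge s_i$ in Bruhat order, equivalently $\{u(1),\dots,u(i)\} \ne \{1,\dots,i\}$. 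Next, since $\Owo{u} \cap \Owh{e} = \Owho{u}$ has dimension $\dim \Hess(S,h) - \ell_h(u)$ while $\dim \Owh{s_i} = \dim \Hess(S,h) - \ell_h(s_i) = \dim \Hess(S,h) - 1$ (as $s_i$ has the single $h$-inversion $(i,i+1)$), the dimension condition becomes $\ell_h(u) = 1$.

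The condition $u \dashrightarrow s_i u$ I would translate as follows. Writing $p = u^{-1}(i+1)$ and $q = u^{-1}(i)$, left multiplication by $s_i$ swaps the values $i$ and $i+1$, so $s_i u = u\, s_{p,q}$. The arrow notation presupposes $\ell(u) > \ell(s_i u)$, which holds iff $p < q$, and then requires $(u \to s_i u)$ to be a non-edge of the GKM graph, i.e.\ $q > h(p)$. Thus $u \dashrightarrow s_i u$ is equivalent to $h(u^{-1}(i+1)) < u^{-1}(i)$, and since $h(p) \ge p$ this already forces $p < q$. At this stage $\mathcal{A}_i = \{u : u \ge s_i,\ \ell_h(u) = 1,\ h(u^{-1}(i+1)) < u^{-1}(i)\}$, and it remains to match this with the stated description.

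The combinatorial heart I would isolate as a lemma: if $\ell_h(u) = 1$ then $u$ has exactly one descent. This holds because every adjacent pair $(j,j+1)$ obeys $j+1 \le h(j)$, so each descent contributes an $h$-inversion; hence $\ell_h(u) = 1$ leaves exactly one descent, say at $m$, with $u$ strictly increasing on $\{1,\dots,m\}$ and on $\{m+1,\dots,n\}$. Assuming $h(p) < q$ (so $p < q$), the consecutive values $i,i+1$ cannot lie in the same increasing run, where the smaller value would come first, and a quick check forces $i+1$ into the first run ($p \le m$) and $i$ into the second ($q > m$). Counting the values $\le i$, which occupy precisely the positions $1,\dots,p-1$ and $m+1,\dots,q$, yields $(p-1)+(q-m) = i$; together with $p \le m < q$ this gives $p \le i < q$, that is, $u^{-1}(i+1) \le i$ and $u^{-1}(i) > i$.

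Finally I would assemble the two inclusions. Any $u \in \mathcal{A}_i$ satisfies $\ell_h(u) = 1$ and $h(p) < q$, so the lemma supplies the missing inequalities $u^{-1}(i+1) \le i$ and $u^{-1}(i) > i$, placing $u$ in the claimed set. Conversely, $u^{-1}(i+1) \le i$ puts the value $i+1$ among the first $i$ entries, giving $u \ge s_i$, and combined with $\ell_h(u) = 1$ and $h(u^{-1}(i+1)) < u^{-1}(i)$ this recovers the three geometric conditions, so $u \in \mathcal{A}_i$. I expect the combinatorial lemma — locating $i$ and $i+1$ in the two increasing runs and establishing the counting identity $(p-1)+(q-m) = i$ — to be the main obstacle; everything else is a routine translation via Proposition~\ref{prop:simple action}, Proposition~\ref{prop_basics}, and the GKM edge description, the only other delicate point being that the arrow $u \dashrightarrow s_i u$ carries the hidden hypothesis $\ell(u) > \ell(s_i u)$, which $h(p) < q$ makes automatic.
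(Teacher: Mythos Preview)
Your argument is correct and follows the same overall route as the paper: both reduce $\mathcal{A}_i$ to $\{u\mid u\ge s_i,\ \ell_h(u)=1,\ u\dashrightarrow s_iu\}$ via $\Owh{e}^T=\mathfrak{S}_n$ and the dimension formula, both translate $u\dashrightarrow s_iu$ into $h(u^{-1}(i+1))<u^{-1}(i)$, and both use the observation that $\ell_h(u)=1$ forces a single descent.

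The only substantive difference is in how the inequalities $u^{-1}(i+1)\le i<u^{-1}(i)$ are extracted. The paper argues by contradiction, invoking the Bruhat condition $u\ge s_i$: if, say, $u^{-1}(i+1)>i$, then some $k>i+1$ must occur among the first $i$ positions, and the triple $k>i+1>i$ then forces at least two descents. You instead locate $i$ and $i+1$ in the two increasing runs and count positions carrying values $\le i$ to obtain $(p-1)+(q-m)=i$, from which $p\le m$ and $q>m$ give $p\le i<q$ directly. Your counting argument is a bit more self-contained (it never uses $u\ge s_i$, which in fact becomes redundant once $p\le i$ is known); the paper's contradiction argument is perhaps more immediately transparent but leans on the Bruhat condition. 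Either way the work is of the same order, and your identification of this combinatorial step as the core of the proof is accurate.
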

\begin{proof}
	Recall from Proposition~\ref{prop:simple action} that we have
	\begin{equation}\label{eq_Ai_1}
	\mathcal{A}_i = \{ u \in \Owh{e}^T \cap \Omega_{s_i}^T \mid
	\dim_{\C} \mathcal{T}_u = \dim_{\C} \Owh{s_i}, ~~u \dasharrow s_i u \}.
	\end{equation}
	By~\eqref{eq_Tu},  we get $\textrm{codim}_{\C} \mathcal{T}_u = \textrm{codim}_{\C}
	\Owh{u}$. Therefore, $\textrm{codim}_{\C} \mathcal T_u = \textrm{codim}_{\C} \Ow{s_i} =1$ if and only if $\ell_h(u)= 1$.
	Moreover,  because $\Owh{e}^T = \mathfrak{S}_n$, we get
	\[
	\Owh{e}^T \cap \Omega_{s_i}^T = \mathfrak{S}_n \cap \Omega_{s_i}^T = \{ u
	\mid u \geq s_i\}.
	\]
	Now the description of $\mathcal{A}_i$ in~\eqref{eq_Ai_1} becomes
	\begin{equation}\label{eq_Ai_2}
	\mathcal{A}_i = \{ u \in \mathfrak{S}_n \mid u \geq s_i,  ~~\ell_h(u) = 1,
	~~u \dasharrow s_i u\}.
	\end{equation}
	
	Recall from~\cite[\S 3.2]{BilleyLakshmibai} a property of the Bruhat order.
	For a set $\{a_1,\dots,a_k\}$ of distinct integers,
	$\{a_1,\dots,a_k\}\!\!\uparrow$ denotes the ordered $k$-tuple obtained from
	$\{a_1,\dots,a_k\}$ by arranging its elements in ascending order. Moreover,
	we use the order to compare two ordered $k$-tuples defined as follows:
	$(a_1,\dots,a_k) \geq (b_1,\dots,b_k)$ if and only if $a_i \geq b_i$ for
	all $1 \leq i \leq k$.
	For a permutation $w \in \mathfrak{S}_n$, we denote by $w^{(k)}$ for the
	ordered $k$-tuple $\{w(1),\dots,w(k)\}\!\!\uparrow$.
	For $w_1, w_2 \in \mathfrak{S}_n$, we have $w_1 \geq w_2$ in Bruhat order
	if and only if
	\begin{equation}\label{eq_Bruhat_order}
	w_1^{(k)} \geq w_2^{(k)} \quad \text{ for all }1 \leq k \leq n.
	\end{equation}
		
	A permutation $u$ satisfies $u \geq s_i$  if and only if $u^{(k)} \geq
	(s_i)^{(k)} $ for all $k$. Note that we have
	\[
	(s_i)^{(k)} = \begin{cases}
	(1,2,\dots,k) & \text{ if } k \neq i,\\
	(1,2,\dots,i-1,i+1) & \text{ if } k =i.
	\end{cases}
	\]
	Since $(1,2,\dots,k)$ is the minimum among the ordered $k$-tuples of
	integers, for a permutation~$u$,
\begin{equation}\label{eq_u_geq_si_condition}
u \geq s_i
\iff u^{(i)} \geq (1,2,\dots,i-1,i+1).
\end{equation}

We set
\[
\mathcal B = \{ u \in \mathfrak{S}_n \mid u^{-1}(i+1) \leq i, ~~ u^{-1}(i) >
i, ~~ h(u^{-1}(i+1)) < u^{-1}(i), ~~\ell_h(u) = 1\}
\]
in the statement of the proposition. We first claim that $\mathcal B \subset \mathcal A_i$. Take $u \in \mathcal B$. Since $u^{-1}(i+1) \leq i$, the number $i+1$ appear in the first $i$ letters so we have $u^{(i)} \geq (1,2,\dots,i-1,i+1)$. This implies that $u \geq s_i$ by \eqref{eq_u_geq_si_condition}. Moreover, the condition $u \dasharrow s_iu$ is equivalent to saying that in one-line
	notation of $u$, the number $i+1$ appears ahead of $i$ and the locations of
	$i$ and $i+1$ in the one-line notation of $u$ are far from each other. More
	precisely, $u^{-1}(i+1) < u^{-1}(i)$ and $h(u^{-1}(i+1)) < u^{-1}(i)$.
Accordingly, $u$ is an element of $\mathcal A_i$.

We claim that $\mathcal A_i \subset \mathcal B$. Take $u \in \mathcal A_i$. Because of the condition $u \dasharrow s_iu$, we have  $u^{-1}(i+1) < u^{-1}(i)$ and $h(u^{-1}(i+1)) < u^{-1}(i)$. It is enough to check that $u^{-1}(i+1) \leq i$ and $u^{-1}(i) > i$.
Since $h$ satisfies $h(j) \geq j+1$ for all $j$, we have
\[
\ell_h(u) \geq \des(u).
\]
Accordingly, because of the assumption $\ell_h(u) = 1$, there exists only one descent in $u$.
Now assume on the contrary that $u^{-1}(i+1) > i$.  Then there exists $k > i+1$ such that $u^{-1}(k) \leq i$ since $u \geq s_i$ and~\eqref{eq_u_geq_si_condition}.
This implies that the numbers $k > i+1 > i$ satisfy $u^{-1}(k) < u^{-1}(i+1) < u^{-1}(i)$. This produces at least two descents in $u$. Therefore, we have $u^{-1}(i+1) \leq i$.
Now consider the condition $u^{-1}(i) > i$. Assume on the contrary that $u^{-1}(i) \leq i$ so we get $u^{-1}(i+1)<u^{-1}(i) \leq i$. Therefore, there exists $k < i$ such that $u^{-1}(k) > i$. This implies that the numbers $i+1 > i > k$ satisfy $u^{-1}(i+1) < u^{-1}(i) < u^{-1}(k)$. This produces at least two descents in $u$. Therefore, we have $u^{-1}(i) > i$.
This proves $\mathcal A_i \subset \mathcal B$ so we are done.
\end{proof}

\begin{example}\label{example_235666_Ai}
Let $h = (2,3,5,6,6,6)$. Using the description of $\mathcal{A}_i$ in
Proposition~\ref{proposition_Ai}, we have the following computations.
\begin{enumerate}
\item $\mathcal{A}_1 = \{23\vred1456, 24\vred1356, 25\vred1346,
26\vred1345, 234\vred156\}$,
\item $\mathcal{A}_2 = \{3\vred12456, 34\vred1256, 35\vred1246,
36\vred1245, 134\vred256\}$,
\item $\mathcal{A}_3 = \{4\vred12356, 14\vred2356, 24\vred1356, 45\vred1236,
46\vred1235\}$,
\item $\mathcal{A}_4 = \{5\vred12346, 15\vred2346, 25\vred1346, 35\vred1246,
56\vred1234\}$,
\item $\mathcal{A}_5 = \{6\vred12345, 16\vred2345, 26\vred1345, 36\vred1245, 46\vred1235\}$.
\end{enumerate}
We decorate the places where descents appear.
\end{example}

Proposition~\ref{proposition_Ai} provides the following corollary.
\begin{corollary}\label{cor_si_action_on_sigma_si}
For $i \in [n-1]$,
let $\mathcal A_i$ be the set used in the description~\eqref{eq_si_action_modified}.
Then, for $u \in \mathcal A_i$, we have $\swh{s_i u} = s_i \cdot \swh{u}$. In particular, we get
\[
s_i \cdot \swh{s_i} = \swh{s_i} + \sum_{u \in \mathcal A_i}\swh{u} -
\sum_{u \in \mathcal A_i} s_i \cdot \swh{u}. 
\]
\end{corollary}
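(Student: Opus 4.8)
The plan is to deduce both assertions directly from the structure already in place, with essentially no new computation. The key point is that, by its very definition, the set $\mathcal{A}_i = \mathcal{A}_{s_i, s_i}$ consists of permutations $u$ that in particular satisfy $u \dasharrow s_i u$. This is exactly the hypothesis of the first case of Proposition~\ref{prop:simple action}. Hence I would apply Proposition~\ref{prop:simple action}(1) with $w = u$: from $u \dasharrow s_i u$ we obtain at once $s_i \cdot \swh{u} = \swh{s_i u}$, which is the first assertion $\swh{s_i u} = s_i \cdot \swh{u}$.

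For the displayed formula, I would start from the identity~\eqref{eq_si_action_modified} and solve for $s_i \cdot \swh{s_i}$, obtaining
\[
s_i \cdot \swh{s_i} = \swh{s_i} + \sum_{u \in \mathcal{A}_i} \swh{u} - \sum_{u \in \mathcal{A}_i} \swh{s_i u}.
\]
Substituting the first assertion $\swh{s_i u} = s_i \cdot \swh{u}$ into the final sum then gives precisely the expression stated in the corollary.

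The one point requiring a moment of care --- and it is the closest thing here to an obstacle --- is verifying that Proposition~\ref{prop:simple action}(1) genuinely applies, rather than the more involved third case of that proposition. This amounts to confirming that the relation between $u$ and $s_i u$ is the dashed one, $u \dasharrow s_i u$. It is: by the explicit description in Proposition~\ref{proposition_Ai}, every $u \in \mathcal{A}_i$ has $u^{-1}(i+1) \leq i < u^{-1}(i)$, so $i+1$ precedes $i$ in the one-line notation of $u$ and $\ell(s_i u) < \ell(u)$, while the separation $h(u^{-1}(i+1)) < u^{-1}(i)$ certifies that the corresponding transposition is not a GKM edge. Since both alternatives in the ``or'' of Proposition~\ref{prop:simple action}(1) yield the same conclusion, no further case distinction is needed, and the argument closes.
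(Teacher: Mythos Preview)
Your argument is correct and follows essentially the same route as the paper: observe that every $u \in \mathcal A_i$ satisfies $u \dasharrow s_i u$ (this is built into the definition of $\mathcal A_{s_i,w}$, and is also recorded as~\eqref{eq_Ai_2}), apply Proposition~\ref{prop:simple action}(1) to get $s_i \cdot \swh{u} = \swh{s_i u}$, and then substitute into~\eqref{eq_si_action_modified}. Your final paragraph re-deriving the dashed relation from Proposition~\ref{proposition_Ai} is redundant---the condition is already part of the definition of $\mathcal A_i$---but harmless.
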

\begin{proof}
As we have seen in~\eqref{eq_Ai_2} in the proof of Proposition~\ref{proposition_Ai}, $u \dasharrow s_i u$ for any $u \in \mathcal A_i$. 
Accordingly, we obtain
\[
s_i \cdot \swh{u} = \swh{s_iu}
\]
by Proposition~\ref{prop:simple action}(1). Using~\eqref{eq_si_action_modified}, the result follows.
\end{proof}

\begin{lemma}\label{lem:empty A_i}
If $T = [n-1]$, that is, $h(i)> i+1$ for all $i=1, 2, \dots, n-2$, then $\mathcal{A}_k=\varnothing$ for all $k=1, 2, \dots, n-1$.
\end{lemma}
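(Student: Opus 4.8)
The plan is to argue by contradiction, showing that the hypothesis $T=[n-1]$ is so strong that any putative element of $\mathcal{A}_k$ would have to be a simple transposition $s_d$, and that such a transposition cannot meet the defining conditions of $\mathcal{A}_k$ recorded in Proposition~\ref{proposition_Ai}. First I would translate the hypothesis: $T=[n-1]$ is exactly the condition $h(j)\geq j+2$ for all $j\leq n-2$, and the key consequence is that \emph{every} inversion $(j,i)$ with $i\leq j+2$ automatically satisfies $i\leq h(j)$, hence is an $h$-inversion. Now suppose some $\mathcal{A}_k$ is nonempty and fix $u\in\mathcal{A}_k$, writing $p=u^{-1}(k+1)$ and $q=u^{-1}(k)$, so that Proposition~\ref{proposition_Ai} gives $p\leq k<q$, $h(p)<q$, and $\ell_h(u)=1$.

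Next I would pin down the shape of $u$. Since $\ell_h(u)\geq\des(u)$ (as established inside the proof of Proposition~\ref{proposition_Ai}) and $u$ is not the identity, the equality $\ell_h(u)=1$ forces $u$ to have a single descent, say at position $d$, and the descent pair $(d,d+1)$ is then the unique $h$-inversion of $u$. By the observation above, $u$ can have no inversion at distance two, i.e.\ $u(j)<u(j+2)$ whenever both sides are defined, since such a pair would be a second $h$-inversion. Combining the monotonicity of $u$ on the two runs $[1,d]$ and $[d+1,n]$ with the descent $u(d)>u(d+1)$ and the two distance-two inequalities $u(d-1)<u(d+1)$ and $u(d)<u(d+2)$ (whichever are defined) yields the total chain
\[
u(1)<\cdots<u(d-1)<u(d+1)<u(d)<u(d+2)<\cdots<u(n),
\]
which exhausts all $n$ positions and hence identifies the values uniquely: $u(j)=j$ for $j\notin\{d,d+1\}$, $u(d)=d+1$, and $u(d+1)=d$. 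In other words $u=s_d$.

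It then remains to contradict $u\in\mathcal{A}_k$. For $u=s_d$ we have $u^{-1}(m)=m$ for $m\notin\{d,d+1\}$, $u^{-1}(d)=d+1$, and $u^{-1}(d+1)=d$, so the requirement $u^{-1}(k+1)\leq k<u^{-1}(k)$ from Proposition~\ref{proposition_Ai} holds only when $k=d$, in which case $p=u^{-1}(d+1)=d$ and $q=u^{-1}(d)=d+1$. But then $h(p)<q$ reads $h(d)<d+1$, i.e.\ $h(d)\leq d$, which is impossible since every Hessenberg function satisfies $h(d)\geq d+1$. This contradiction gives $\mathcal{A}_k=\varnothing$ for all $k$.

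I expect the main obstacle to be the middle step, namely the rigorous deduction of $u=s_d$ from ``single descent plus no distance-two inversion.'' The argument is just a chain of inequalities, but it requires separately handling the boundary cases $d=1$ and $d=n-1$, where only one of the two distance-two inequalities is available; in each of these cases one must check that the chain still closes up to a total order on the $n$ positions. Everything else—the reformulation of $T=[n-1]$, the bound $\ell_h\geq\des$, and the final numerical contradiction—is routine given Proposition~\ref{proposition_Ai}.
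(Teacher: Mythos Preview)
Your proof is correct. Both your argument and the paper's proceed by contradiction from Proposition~\ref{proposition_Ai}, but the execution differs. The paper does not fully identify $u$: after recording $p=u^{-1}(k+1)$, $q=u^{-1}(k)$, it uses the hypothesis to get $p+2\leq q$, observes that the unique descent $d$ lies in $[p,q)$, and then splits into the two cases $d=p$ and $d>p$, in each case directly exhibiting a second $h$-inversion (the pair $(d,d+2)$ or $(d-1,d+1)$, respectively) from $h(j)\geq j+2$. Your route instead first isolates the intermediate fact that under $T=[n-1]$ the only permutations with $\ell_h=1$ are the simple transpositions $s_d$, and then derives the contradiction from the separation condition $h(p)<q$. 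This is slightly longer---in particular you must handle the boundary cases $d=1$ and $d=n-1$, which the paper's argument sidesteps because the inequality $p+2\leq q$ automatically provides enough room for the extra inversion---but it yields a clean standalone statement about the length-one permutations that may be of independent use.
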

\begin{proof}
Assume on the contrary that there exists $w\in \mathcal{A}_k$. Let $i_{k+1}=w^{-1}(k+1)$ and $i_k=w^{-1}(k)$.  Then, by Proposition~\ref{proposition_Ai} and the assumption,
\[
i_{k+1}\leq k <i_{k} \quad \text{ and } \quad i_{k+1}+1<h(i_{k+1})<i_k
\]
must hold and we can see that $i_{k+1}+2\leq i_k$ and the unique descent, say $d$, must be between $i_{k+1}$ and $i_k$, that is, $i_{k+1}\leq d<i_k$.

If $d=i_{k+1}$, then we have
\[
w(d) =k+1 >w(d+1) \quad \text{ and }\quad
 w(d+2)\leq w(i_k)=k.
\]
Hence, both $(d,d+1)$ and $(d, d+2)$ are counted in $\ell_h(w)$ contradicting to $w\in \mathcal{A}_k$.

If $d>i_{k+1}$, then we obtain
\[
w(i_{k+1})=k+1 \le w(d-1) <w(d) \quad
\text{ and } \quad
w(d+1)\leq w(i_k) = k.
\]
Hence, both $(d, d+1)$ and $(d-1, d+1)$ are counted in $\ell_h(w)$  contradicting to $w\in \mathcal{A}_k$.

This shows that there is no $w$ in $\mathcal{A}_k$.
\end{proof}

With explicit descriptions of $\mathcal{A}_i$ for $i=1, 2, \dots, n-1$, we completely determine the stabilizer subgroup of $\swh{\wi{i}}$ in most cases: See Propositions~\ref{prop:stabilizer_12}, and \ref{prop:stabilizer_34}.
The following lemma can be derived using known results given at the beginning of Section~8.5 and Theorem~3.3A in~\cite{DM}. We provide a proof for the readers' convenience.
\begin{lemma}\label{lem:subgroup of S_n}
Let $H$ be a subgroup of $\mathfrak S_n$, $n>2$,  that contains $\mathfrak S_{(i, n-i)}$ but does not contain the transposition $s_i$ for some $i$.

\begin{enumerate}
\item If $i\neq  n-i$, then $H=\mathfrak S_{(i, n-i)}$.
\item If $i = n-i$ and there is no $\alpha \in H$ such that $\alpha \left(\{1, 2, \dots, i\}\right)=\{i+1, i+2, \dots, n\}$, then $H=\mathfrak S_{(i, n-i)}$.
\end{enumerate}
\end{lemma}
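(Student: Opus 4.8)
The plan is to treat both parts uniformly, by showing that $\mathfrak S_{(i,n-i)}$ is maximal enough: any element of $H$ lying outside $\mathfrak S_{(i,n-i)}$ forces $H=\mathfrak S_n$, which is incompatible with $s_i\notin H$. Throughout I write $A=\{1,\dots,i\}$ and $B=\{i+1,\dots,n\}$, so that $\mathfrak S_{(i,n-i)}=\mathfrak S_A\times\mathfrak S_B=\Stab_{\mathfrak S_n}(A)$ is the setwise stabilizer of $A$; thus $g\in\mathfrak S_{(i,n-i)}$ if and only if $g(A)=A$. Call a transposition $s_{a,b}$ \emph{crossing} if $a\in A$ and $b\in B$, and note that $s_i=s_{i,i+1}$ is crossing. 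Arguing by contradiction, I assume $H\supsetneq\mathfrak S_{(i,n-i)}$ and aim to exhibit a single crossing transposition inside $H$, from which $H=\mathfrak S_n$ (hence $s_i\in H$) will follow.

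First I would isolate the generation step. Since $\mathfrak S_A\times\mathfrak S_B\le H$ acts as the full symmetric group on each of the blocks $A$ and $B$, conjugating one crossing transposition $s_{a,b}$ by a suitable element of $\mathfrak S_A\times\mathfrak S_B$ yields every crossing transposition $s_{a',b'}$ with $a'\in A$ and $b'\in B$. Together with the within-block transpositions, which already lie in $\mathfrak S_A\times\mathfrak S_B$, these account for all transpositions of $\mathfrak S_n$; as transpositions generate $\mathfrak S_n$, we get $H=\mathfrak S_n$. So the entire lemma reduces to producing one crossing transposition in $H$.

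To produce it, pick $g\in H$ with $C:=g(A)\ne A$, and set $C^{c}:=[n]\setminus C=g(B)$. Conjugation gives $g(\mathfrak S_A\times\mathfrak S_B)g^{-1}=\Stab_{\mathfrak S_n}(C)=\mathfrak S_C\times\mathfrak S_{C^{c}}\subseteq H$, so it is enough to find $x\in A$ and $y\in B$ lying in a common block of the partition $\{C,C^{c}\}$: then $s_{x,y}\in\mathfrak S_C\times\mathfrak S_{C^{c}}\subseteq H$ is crossing. This is where the two hypotheses enter, precisely to rule out the degenerate case $C=B$: in part (1) the cardinality inequality $|A|=i\ne n-i=|B|$ forbids $C=B$, while in part (2) an element $g$ with $g(A)=B$ is exactly the swap excluded by assumption. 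Granting $C\ne A$ and $C\ne B$, I would verify the dichotomy directly: since $|C|=i$ and $C\ne A$ we have $C\not\subseteq A$, hence $C\cap B\ne\varnothing$; if moreover $C\cap A\ne\varnothing$ we are done inside $C$, and otherwise $C\subseteq B$, which (as $C\ne B$) can only occur when $i<n-i$, in which case $C^{c}\supseteq A$ and $B\setminus C\ne\varnothing$, so the crossing pair sits inside $C^{c}$.

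The main obstacle is exactly this last piece of bookkeeping: the argument turns on the dichotomy that $C$ or $C^{c}$ straddles both blocks, and making it airtight requires invoking each case hypothesis in the right spot to eliminate $C=B$. Everything else is routine, namely the conjugation identity $g\,\Stab_{\mathfrak S_n}(A)\,g^{-1}=\Stab_{\mathfrak S_n}(g(A))$ and the recovery of $\mathfrak S_n$ from $\mathfrak S_A\times\mathfrak S_B$ together with one crossing transposition. In particular, this route gives a self-contained derivation that does not rely on the cited results of \cite{DM}.
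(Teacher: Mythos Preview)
Your proof is correct and follows essentially the same strategy as the paper: assume $H$ properly contains $\mathfrak S_{(i,n-i)}$, use an element outside the block stabilizer together with conjugation to produce a crossing transposition, and then recover $s_i$ (equivalently, all of $\mathfrak S_n$). The only cosmetic difference is that the paper conjugates a single transposition $(k\ l)\in\mathfrak S_A$ by $\beta$ after reducing to the case $i>n-i$, whereas you conjugate the full stabilizer to $\mathfrak S_C\times\mathfrak S_{C^c}$ and locate the crossing transposition there via an explicit case split on whether $C$ or $C^c$ straddles the blocks.
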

\begin{proof}
Assume on the contrary that there is $\beta\in H \setminus \mathfrak S_{(i, n-i)}$.
If $i\neq  n-i$, we may assume that $i>n-i$ and  there must be elements $k, l\in\{1, \dots, i\}$
such that
\begin{equation}\label{eq_beta_kl}
\beta(k)\in\{1, \dots, i\}\quad \text{ and } \quad
\beta(l)\not\in\{1, \dots, i\}.
\end{equation}
On the other hand, if $i = n-i$ and there is no $\alpha \in H$ such that $\alpha \left(\{1, 2, \dots, i\}\right)=\{i+1, i+2, \dots, n\}$, then there also exist elements $k, l\in\{1, \dots, i\}$ satisfying~\eqref{eq_beta_kl}.

The element $\beta$ produces
$\beta (k\,\, l) \beta^{-1}=(\beta(k) \,\, \beta(l))\in H$ and we have
\[
(\beta(l)\,\, i+1)(\beta(k) \,\, i)(\beta(k) \,\, \beta(l))(\beta(k) \,\, i)(\beta(l)\,\, i+1)=(i\,\, i+1)=s_i\in H,
\]
which is a contradiction.
Here, we denote by $(k \ l)$ the transposition $s_{k,l}$.
We conclude that $H=\mathfrak S_{(i, n-i)}$.
\end{proof}

Let $\Stab(\swh{w})$ denote the stabilizer subgroup of $\swh{w}$ in  $\mathfrak S_n$.  We use Proposition~\ref{prop:simple action} to compute $\Stab(\swh{\wi{i}})$.

\begin{proposition}\label{prop:stabilizer_12}
Let $h \colon [n] \to [n]$ be a Hessenberg function and $i \in T = T_h$.
\begin{enumerate}
\item If $i+1 \in T \cup \{n\}$,
then
\begin{enumerate}
\item $\Stab(\swh{\wi{i}})=\mathfrak S_{(i,n-i)}$ if $n-i\neq i$ and $T \neq [n-1]$.
\item $\mathfrak S_{(i,n-i)}\leq \Stab(\swh{\wi{i}})<\mathfrak S_{n}$ if $n-i=i$ and $T \neq [n-1]$.
\item $\Stab(\swh{\wi{i}})=\mathfrak S_n$  otherwise, that is, $T = [n-1]$.
\end{enumerate}
\item If $i+1 \notin T \cup \{n\}$, then
$\Stab(\swh{\wi{i}})=\mathfrak S_{(n-1, 1)}$.
\end{enumerate}
\end{proposition}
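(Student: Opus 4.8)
The plan is to compute $\Stab(\swh{\wi{i}})$ by letting each simple reflection $s_1,\dots,s_{n-1}$ act on $\swh{\wi{i}}$ via Proposition~\ref{prop:simple action}, and then to identify the full stabilizer with the subgroup criterion in Lemma~\ref{lem:subgroup of S_n}. The action of $s_j$ on $\swh{w}$ is controlled by the two positions $w^{-1}(j)$ and $w^{-1}(j+1)$: writing $a<b$ for them, the relevant GKM edge between $w$ and $s_jw$ is present exactly when $b\le h(a)$ (Theorem~\ref{thm_GKM}), while $\ell(s_jw)\gtrless\ell(w)$ according to whether the value $j$ precedes $j+1$ in $w$ or not. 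So the bulk of the argument is the careful bookkeeping of this pair of positions for each $j$ and for both forms of $\wi{i}$.

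First I treat case (1), where $\wi{i}=s_i$. For $j\neq i$ the pair is $\{s_i(j),s_i(j+1)\}$: when $|j-i|\ge 2$ the edge is automatic; when $j=i-1$ the pair is $\{i-1,i+1\}$ and the edge $i+1\le h(i-1)$ holds precisely because $i\in T$; when $j=i+1\le n-1$ the pair is $\{i,i+2\}$ and the edge $i+2\le h(i)$ holds precisely because $i+1\in T$. In each case $\ell(s_js_i)=2>\ell(s_i)$, so Proposition~\ref{prop:simple action}(2) gives $s_j\cdot\swh{s_i}=\swh{s_i}$; hence $\mathfrak S_{(i,n-i)}=\langle s_j\mid j\neq i\rangle$ stabilizes $\swh{s_i}$. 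Whether $s_i$ itself stabilizes is decided by $\mathcal A_i$ through Lemma~\ref{lem: empty implies stabilize}: $s_i\in\Stab(\swh{s_i})$ iff $\mathcal A_i=\varnothing$, and since $\mathfrak S_{(i,n-i)}$ together with $s_i$ generates $\mathfrak S_n$, we get $\Stab(\swh{s_i})=\mathfrak S_n$ exactly when $\mathcal A_i=\varnothing$. If $T=[n-1]$ this holds by Lemma~\ref{lem:empty A_i}, giving (1)(c). If $T\neq[n-1]$ I will show $\mathcal A_i\neq\varnothing$ (see below), so $s_i\notin\Stab$, and Lemma~\ref{lem:subgroup of S_n} finishes: part~(1) applies when $i\neq n-i$ and $n>2$ to give $\Stab=\mathfrak S_{(i,n-i)}$ (this is (1)(a)), whereas for $i=n-i$ the chain $\mathfrak S_{(i,n-i)}\le\Stab<\mathfrak S_n$ is immediate from $s_i\notin\Stab$ (this is (1)(b)).

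The step I expect to be the crux is the implication $T\neq[n-1]\Rightarrow\mathcal A_i\neq\varnothing$, i.e.\ exhibiting a permutation with exactly one $h$-inversion satisfying the conditions of Proposition~\ref{proposition_Ai}. The organizing remark is that $T\neq[n-1]$ is equivalent to the existence of a \emph{tight} position $t\in[1,n-2]$, meaning $h(t)=t+1$, in which case $t$ reaches only $t+1$. Given such a $t$, I construct $u\in\mathcal A_i$ by one of two mirror-image moves. If $t<i$, pull $i+1$ back to position $t$, taking $u=(1,\dots,t-1,\,i+1,\,t,t+1,\dots,i,\,i+2,\dots,n)$; tightness of $t$ forces the only $h$-inversion to be $(t,t+1)$, and $h(t)=t+1<i+1=u^{-1}(i)$ supplies the spread condition. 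If $t\ge i$ (and automatically $t\le n-2$), push $i$ forward to position $t+2$, taking $u=(1,\dots,i-1,\,i+1,i+2,\dots,t+2,\,i,\,t+3,\dots,n)$; since $h(j)\le h(t)=t+1<t+2$ for every $j\le t$, no position $\le t$ reaches $t+2$, so the unique $h$-inversion is $(t+1,t+2)$, and $h(i)\le t+1<t+2=u^{-1}(i)$ again gives the spread. In both cases a direct check gives $\ell_h(u)=1$ and all conditions of Proposition~\ref{proposition_Ai}, so $\mathcal A_i\neq\varnothing$; the two constructions together handle every tight $t$, hence every $i$ in case (1).

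Finally, case (2) does not involve $\mathcal A_i$ since $\wi{i}\neq s_i$. Here $i+1\notin T$ forces $h(i)=i+1$, and $i\le n-2$. Reading off each $s_j$ from the positions of $j,j+1$ in $\wi{i}$, for $j\le n-2$ the two positions lie within $h$-reach and the length increases, so Proposition~\ref{prop:simple action}(2) yields stabilization; thus $\mathfrak S_{(n-1,1)}=\langle s_1,\dots,s_{n-2}\rangle\le\Stab(\swh{\wi{i}})$. For $j=n-1$ the pair is $\{i,n\}$, and since $h(i)=i+1<n$ there is no edge while $\ell(s_{n-1}\wi{i})<\ell(\wi{i})$; hence $\wi{i}\dasharrow s_{n-1}\wi{i}$ and Proposition~\ref{prop:simple action}(1) gives $s_{n-1}\cdot\swh{\wi{i}}=\swh{s_{n-1}\wi{i}}\neq\swh{\wi{i}}$, so $s_{n-1}\notin\Stab$. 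As $n-1\neq 1$ for $n>2$, Lemma~\ref{lem:subgroup of S_n}(1) forces $\Stab(\swh{\wi{i}})=\mathfrak S_{(n-1,1)}$, proving (2).
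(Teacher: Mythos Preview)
Your argument is correct and matches the paper's proof step for step: verify $s_j\cdot\swh{s_i}=\swh{s_i}$ for $j\neq i$ via Proposition~\ref{prop:simple action}(2), reduce the question of whether $s_i$ stabilizes to the emptiness of $\mathcal A_i$ by Lemma~\ref{lem: empty implies stabilize}, exhibit an explicit element of $\mathcal A_i$ when $T\neq[n-1]$, and finish with Lemma~\ref{lem:subgroup of S_n}. The one place you diverge is the explicit witness for $\mathcal A_i\neq\varnothing$ in the sub-case $t<i$: you place the descent at the tight position $t$ itself (so $i+1$ sits in slot $t$), whereas the paper puts the descent one step to the right, at $j=t+1$. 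Your choice is in fact the more robust one, since the paper's permutation can pick up an extra $h$-inversion when $h(t+1)>t+2$ (for instance $h=(2,4,4,4)$, $i=3$, $j=2$ yields $u=2413$ with $\ell_h(u)=2\notin\mathcal A_3$), while in your construction the equality $h(t)=t+1$ pins the unique $h$-inversion to $(t,t+1)$ regardless of the rest of $h$.
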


\begin{proof}
\noindent (1)
Suppose that $i \in T$ and $i+1 \in T \cup \{n\}$, that is,  $h(i-1)> i$ and $h(i)>i+1$; or $i+1 = n$, then $\wi{i}=s_i$ by Definition~\ref{def:permutations}. For $k\neq i$, $s_k\wi{i}\rightarrow \wi{i}$ and $s_k \cdot \swh{\wi{i}}=\swh{\wi{i}}$. Accordingly,  we have 
\begin{equation}\label{eq_Stab_i_and_i+1_in_T}
\mathfrak{S}_{(i,n-i)} \leq \Stab(\swh{\wi{i}}) < \mathfrak{S}_n.
\end{equation}
On the other hand, we have $\wi{i}=s_i\rightarrow s_i\wi{i}=e$, so we apply Lemma \ref{lem: empty implies stabilize} to get that
\[
\mathcal A_i \text{ is empty  if and only if } s_i \in \Stab(\swh{\wi{i}}),
\]
where $\mathcal A_i$ is described in Proposition~\ref{proposition_Ai}.

Suppose that $T \neq [n-1]$. Choose $j \in [n-1] \setminus T$. Then $h(j-1) = j$.
If such $j$ satisfies $i < j$, then
\[
u=1\, 2\,\cdots i-1\,\, i+1\,\,\cdots \, j+1 \,\vred \, i\,\,j+2\,\cdots \, n
\]
is a permutation in $\mathcal A_i$ which has the unique descent at $j$.
If any $j \in [n-1] \setminus T$ satisfies $j < i$, then take
\[
u=i-j+1\,\,i-j+2\,\,\cdots\,\,i-1\,\,i+1\, \vred \, 1\,\,2\,\,\cdots i-j \,\,i\,\,i+2\,\,\cdots n.
\]
We notice that $u$ is a permutation in $\mathcal A_i$ which has the unique descent at $j$.
We thus have $\mathcal A_i\neq\varnothing$ and $s_i\not \in \Stab(\swh{\wi{i}})$ if $T \neq [n-1]$.
Lemma~\ref{lem:subgroup of S_n} and~\eqref{eq_Stab_i_and_i+1_in_T} now imply that $\Stab(\swh{\wi{i}})=\mathfrak S_{(i,n-i)}$ if $n-i\neq i$, which proves 1-(a). The statement 1-(b) follows from~\eqref{eq_Stab_i_and_i+1_in_T}.

If $T = [n-1]$,
then by Lemma~\ref{lem:empty A_i}, $\mathcal A_i$ is empty and $s_i\in \Stab(\swh{\wi{i}})$. Therefore, $\Stab(\swh{\wi{i}})=\mathfrak S_n$, which proves 1-(c).

\smallskip
\noindent (2)
Suppose that $i \in T$ and $i+1 \notin T \cup \{n\}$, that is,
$h(i)=i+1$. Then $s_k\wi{i}\rightarrow \wi{i}$ and $s_k\cdot \swh{\wi{i}}=\swh{\wi{i}}$ for $k=1, \dots, n-2$. Moreover, $\wi{i}\dashrightarrow s_{n-1}\wi{i}$ and we have $s_{n-1}\cdot \swh{\wi{i}}= \swh{s_{n-1}\wi{i}}\neq\swh{\wi{i}}$. This, due to Lemma~\ref{lem:subgroup of S_n},   proves the claim.
\end{proof}

We also obtain the following proposition using similar arguments used in the proof of Proposition~\ref{prop:stabilizer_12}.

\begin{proposition}\label{prop:stabilizer_34}
Let $h \colon [n] \to [n]$ be a Hessenberg function and $i \notin T = T_h$.
\begin{enumerate}
\item If $i+1 \in T \cup \{n\}$, then
$\Stab(\swh{\wi{i}})=\mathfrak S_{(1, n-1)} $.
\item If $i+1 \notin T \cup \{n\}$, then
\begin{enumerate}
\item $\Stab(\swh{\wi{i}})=\mathfrak S_{(n-i, i)} $ if $n-i\neq i$.
\item $\mathfrak S_{(n-i, i)}\leq \Stab(\swh{\wi{i}})<\mathfrak S_{n}$ if $n-i=i$.
\end{enumerate}
\end{enumerate}
\end{proposition}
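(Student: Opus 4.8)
The plan is to mirror the proof of Proposition~\ref{prop:stabilizer_12}: read off $\wi{i}$ from Definition~\ref{def:permutations} in each case, identify the candidate Young subgroup together with the single simple reflection it omits, prove the containment $\mathfrak{S}_{(\cdot,\cdot)}\le\Stab(\swh{\wi{i}})$ by showing each of its standard generators fixes $\swh{\wi{i}}$, and prove the omitted reflection does not fix $\swh{\wi{i}}$; Lemma~\ref{lem:subgroup of S_n} then pins down the stabilizer exactly. The only information about $\wi{i}$ I will need is its inverse. In case~(1), $(\wi{i})^{-1}(k)=k-1$ for $2\le k\le i+1$, $(\wi{i})^{-1}(1)=i+1$, and $(\wi{i})^{-1}(k)=k$ for $k\ge i+2$; the candidate is $\mathfrak{S}_{(1,n-1)}$, omitting $s_1$. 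In case~(2), $(\wi{i})^{-1}(k)=i+k$ for $1\le k\le n-i$ and $(\wi{i})^{-1}(k)=k-n+i$ for $n-i<k\le n$; the candidate is $\mathfrak{S}_{(n-i,i)}$, omitting $s_{n-i}$.

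For the containment I would check that each generator $s_k$ of the candidate subgroup satisfies $s_k\wi{i}\to\wi{i}$, so that $s_k\cdot\swh{\wi{i}}=\swh{\wi{i}}$ by Proposition~\ref{prop:simple action}(2). From the formulas above, for each such $k$ one has $(\wi{i})^{-1}(k)<(\wi{i})^{-1}(k+1)$ (hence $s_k\wi{i}$ is longer), and in every case other than $k=i+1$ in case~(1) the two relevant positions are consecutive, reducing the edge condition to $p+1\le h(p)$, which always holds for a Hessenberg function. The remaining case $k=i+1$ in case~(1) (when $i+1<n$) has positions $i$ and $i+2$, and the edge condition $i+2\le h(i)$ is precisely the hypothesis $i+1\in T$. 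This yields $\mathfrak{S}_{(1,n-1)}\le\Stab$ in case~(1) and $\mathfrak{S}_{(n-i,i)}\le\Stab$ in case~(2).

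For the omitted reflection the point is that it produces a \emph{non-edge}, landing us in Proposition~\ref{prop:simple action}(1) rather than the delicate case~(3). The crucial input is that $i\notin T$ forces $h(i-1)=i$, so $h(1)\le h(i-1)=i\le n-1$. In case~(1), applying $s_1$ swaps the values $1,2$, which occupy positions $i+1$ and $1$; an edge would need $i+1\le h(1)$, impossible since $h(1)\le i$. In case~(2), applying $s_{n-i}$ swaps $n-i,\,n-i+1$, occupying positions $n$ and $1$; an edge would need $n\le h(1)$, impossible since $h(1)\le n-1$. In both situations Proposition~\ref{prop:simple action}(1) gives $s_\ast\cdot\swh{\wi{i}}=\swh{s_\ast\wi{i}}$, which differs from $\swh{\wi{i}}$ because the dot action preserves degree while the BB classes of distinct permutations are linearly independent (Proposition~\ref{prop_basis}); hence the omitted reflection does not stabilize. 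Finally Lemma~\ref{lem:subgroup of S_n}(1) upgrades containment to equality whenever the two block sizes differ, yielding $\Stab=\mathfrak{S}_{(1,n-1)}$ in case~(1) (legitimate since $i\notin T$ forces $i\ge2$, hence $n\ge3$) and $\Stab=\mathfrak{S}_{(n-i,i)}$ in case~2(a); when $n-i=i$ only $\mathfrak{S}_{(n-i,i)}\le\Stab<\mathfrak{S}_n$ survives, the strictness coming from the non-stabilizing reflection, which is exactly statement 2(b).

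I expect the main obstacle to be essentially bookkeeping, with the genuine content lying in confirming that we never fall into case~(3) of Proposition~\ref{prop:simple action}. This is guaranteed by the single observation that $i\notin T$ keeps $h(1)$ small enough ($h(1)\le i$) to force the boundary multiplication to be a non-edge, so that no analysis of any set $\mathcal{A}_{s_k,w}$ is required. The one non-routine identification is matching the $k=i+1$ edge condition in case~(1) to the hypothesis $i+1\in T$, and one must keep the balanced case $n-i=i$ separate, since Lemma~\ref{lem:subgroup of S_n}(2) cannot exclude the larger subgroup.
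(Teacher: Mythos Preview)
Your proposal is correct and follows essentially the same route as the paper's proof: in each case you verify $s_k\wi{i}\to\wi{i}$ for the generators of the candidate Young subgroup and $\wi{i}\dasharrow s_{\ast}\wi{i}$ for the omitted reflection, then invoke Lemma~\ref{lem:subgroup of S_n}. The paper states these edge/non-edge relations without unpacking them; your added verifications (consecutive positions, the identification of the $k=i+1$ edge condition with $i+1\in T$, and the bound $h(1)\le h(i-1)=i$ from $i\notin T$) are exactly the computations that justify the paper's one-line claims.
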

\begin{proof}
\noindent
(1)
Suppose that $i \notin T$ and $i+1 \in T \cup \{n\}$.
Then we have $\wi{i}\dashrightarrow s_1\wi{i}$ and $s_k\wi{i}\rightarrow \wi{i}$ for $k=2, \dots, n-1$. Thus we have  $s_1\not\in \Stab(\swh{\wi{i}})$ and $s_2, \dots, s_{n-1}\in \Stab(\swh{\wi{i}})$, proving the first part of the proposition due to Lemma~\ref{lem:subgroup of S_n}.

\smallskip
\noindent
(2)
Suppose that $i \notin T$ and $i+1 \notin T \cup \{n\}$.
Then $\wi{i}\dashrightarrow s_{n-i}\wi{i}$ and $s_k\wi{i}\rightarrow \wi{i}$ for $k\neq n-i$. Thus we have $s_{n-i}\not\in \Stab(\swh{\wi{i}})$ and $s_k\in \Stab(\swh{\wi{i}})$ for $k\neq n-i$, proving the second part of the proposition due to Lemma~\ref{lem:subgroup of S_n}.
\end{proof}
\begin{remark}
We will see in Corollary~\ref{cor_stab_nn} that for $i$ satisfying $i \notin T$ and $i+1 \notin T \cup \{n\}$, we have $\Stab(\swh{\wi{i}}) = \mathfrak{S}_{(n-i,i)}$ while we have an inclusion in  Proposition~\ref{prop:stabilizer_34}.
\end{remark}

\section{A partition of \texorpdfstring{$\mathfrak{S}_n$}{Sn}}
\label{sec_partition}
In this section, we provide a partition of $\mathfrak{S}_n$ each class of which consists of permutations having the same graph type in Proposition~\ref{prop_elements_in_Pi}. Using this description, we also enumerate the dimension of $H^{2}(\Hess(S,h))$.
\begin{lemma}\label{lemma_Du_property}
	Let $T = T_h$.
	For $1 \leq i \leq n-1$, we have
\[
\left( \bigcup_{u \in \mathcal A_i} \Des(u) \right)\cap \{ j \in T \mid j
\geq i\} = \emptyset.
\]
\end{lemma}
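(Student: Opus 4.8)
The plan is to fix $u \in \mathcal{A}_i$, show that $u$ has a single descent $d$, and prove that $d$ cannot lie in $\{j \in T \mid j \geq i\}$; concretely, I will show that if $d \in T$ then $d < i$.

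First I would determine the shape of $u$. By Proposition~\ref{proposition_Ai} we have $\ell_h(u) = 1$, and since $h(j) \geq j+1$ gives $\ell_h(u) \geq \des(u)$ (as noted in the proof of Proposition~\ref{proposition_Ai}), $u$ has at most one descent; as $u^{-1}(i+1) \leq i < u^{-1}(i)$ shows $u \neq e$, it has exactly one, say at $d$, so $\Des(u) = \{d\}$. Because $d+1 \leq h(d)$ always holds, $(d,d+1)$ is an $\ell_h$-inversion, hence the unique one. Setting $p_1 = u^{-1}(i+1)$ and $p_2 = u^{-1}(i)$, the one-descent shape forces both runs $u(1) < \cdots < u(d)$ and $u(d+1) < \cdots < u(n)$ to be increasing, so every inversion of $u$ straddles $d$; applying this to the inversion $(p_1,p_2)$ yields $p_1 \leq d < p_2$. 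In particular $u(d+1) \leq u(p_2) = i$, and if $p_1 < d$ (so $d \geq 2$ and $p_1 \leq d-1$) then $u(d-1) \geq u(p_1) = i+1$.

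The heart of the proof is a dichotomy on $p_1$ under the hypothesis $d \in T$, i.e.\ $h(d-1) \geq d+1$. If $p_1 < d$, then $u(d-1) \geq i+1 > i \geq u(d+1)$, so $u(d-1) > u(d+1)$; since $d+1 \leq h(d-1)$ is exactly the statement $d \in T$, the pair $(d-1,d+1)$ is a second $\ell_h$-inversion, contradicting $\ell_h(u) = 1$. Thus $d \in T$ forces $p_1 = d$, whence $d = p_1 \leq i$ by the defining inequality $u^{-1}(i+1) \leq i$. To finish I would exclude $d = i$: if $p_1 = d = i$, then the increasing first run has maximum $u(i) = i+1$ on an $i$-element value set that omits $i$ (because $i = u(p_2)$ with $p_2 > d$), which forces this set to be $\{1,\dots,i-1,i+1\}$ and hence $u = s_i$; but $s_i$ violates $h(u^{-1}(i+1)) = h(i) < i+1 = u^{-1}(i)$ since $h(i) \geq i+1$, so $s_i \notin \mathcal{A}_i$, a contradiction. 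Hence $d < i$, which is precisely $d \notin \{j \in T \mid j \geq i\}$.

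I expect the main obstacle to be the case $p_1 < d$: one must exhibit the \emph{second} forbidden pair $(d-1,d+1)$ and confirm it is a genuine $\ell_h$-inversion, for which the sharp Hessenberg inequality $d+1 \leq h(d-1)$ provided by $d \in T$ is indispensable---this is the single point at which the hypothesis $d \in T$ enters, converting the length constraint $\ell_h(u)=1$ into a contradiction. A secondary delicate point is the boundary subcase $p_1 = d = i$, where the increasing first run must be shown to have value set forced to $\{1,\dots,i-1,i+1\}$, identifying $u$ with $s_i$, after which membership in $\mathcal{A}_i$ fails exactly because $h(i) \geq i+1$.
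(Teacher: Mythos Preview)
Your proof is correct and uses essentially the same key idea as the paper: the hypothesis $d \in T$ gives $h(d-1) \geq d+1$, so the pair $(d-1,d+1)$ is a potential $\ell_h$-inversion, which together with $\ell_h(u)=1$ forces constraints that contradict $u \in \mathcal{A}_i$. The paper organizes this slightly differently---it first deduces $u(d-1) < u(d+1)$, then uses this to show $u(k)=k$ for all $k<d$ and derive the contradiction---whereas you split on whether $p_1 < d$ or $p_1 = d$ and handle each case directly; the endgame (identifying $u$ with $s_i$ when $d=i$ and excluding it via $h(i)\geq i+1$) is identical in both arguments.
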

\begin{proof}
Assume on the contrary that there exists $u \in \mathcal{A}_i$ such that the
element in the descent set $\Des(u) = \{j \}$ satisfies $j \in T$ and $j \geq
i$.
By the definition of $T$, we have $h(j-1) > j$, so a part of the graph
$\Gwh{u}$ can be depicted as follows:
\begin{center}
\begin{tikzpicture}[scale=1.5]
\tikzstyle{v} = [ ]

\node[v] (0) at (0,0) {{$1$}};
\node (1) at (1,0) {$\cdots$};
\node[v]  (2) at (2,0) {{$i$}};
\node (3) at (3,0) {$\cdots$};
\node[v] (4) at (4,0) {{$j-1$}};
\node[v] (5) at (5,0) {{$j$}};
\node[v] (6) at (6,0) {{$j+1$}};
\node (7) at (7,0) {$\cdots$};
\node[v] (8) at (8,0) {{ $n$}};

\draw[->,thick] (0)--(1);
\draw[->,thick] (1)--(2);
\draw[->,thick] (2)--(3);
\draw[->,thick] (3)--(4);
\draw[->,thick] (4)--(5);
\draw[->, out=45, in=135,thick] (4) to (6);
\draw[->,thick] (6)--(7);
\draw[->,thick] (7)--(8);

\draw[ <-, red,thick] (5)--(6);

\end{tikzpicture}
\end{center}
Indeed, the permutation $u$ satisfies
\begin{equation}\label{eq_u_description}
\begin{tikzcd}[column sep=0.1cm, row sep = 0.2cm]
u(1) \arrow[r, white,   "\textcolor{black}{<}" description]
&  \cdots   \arrow[r, white, "\textcolor{black}{<}" description]
&u(i)  \arrow[r, white, "\textcolor{black}{<}" description]
&\cdots \arrow[r, white, "\textcolor{black}{<}" description]
&u(j-1)  \arrow[rd, white,sloped, "\textcolor{black}{<}" description]
& <
&u(j+1)  \arrow[r, white, "\textcolor{black}{<}" description]
\arrow[ld, white,sloped, "\textcolor{black}{>}" description]
& u(j+2)  \arrow[r, white, "\textcolor{black}{<}" description]
& \cdots  \arrow[r, white, "\textcolor{black}{<}" description]
& u(n) \\
& & & & &u(j)
\end{tikzcd}
\end{equation}
Accordingly, $u(j) \geq j$ and
in the one-line notation of $u$, the numbers $[n] \setminus \{ u(j)\}$ are
displayed in ascending order. Therefore, $u(k) = k$ for all $1 \leq k < j$,
which contradicts the assumption $u \in \mathcal{A}_i$ because any elements
$u$ in $\mathcal{A}_i$ should satisfy $u^{-1}(i+1) \leq i$ and $h(u^{-1}(i+1))< u^{-1}(i)$ by
Proposition~\ref{proposition_Ai}.
Here, if $j = i$ and $u(j) = u(i) = i+1$, then we should have $u(i+1) = i$ and such a permutation $u$ cannot satisfy the condition $h(u^{-1}(i+1))< u^{-1}(i)$.
Hence, the result follows.
\end{proof}

\begin{example}\label{example_235666_Ai_intersection_T}
We demonstrate Lemma~\ref{lemma_Du_property} in this example.
Suppose that $h = (2,3,5,6,6,6)$.
We obtain
$T = T_h = \{ 1, 4,5\}$.
Considering the computations of $\mathcal{A}_i$ in Example~\ref{example_235666_Ai},
we obtain
\[
\begin{split}
&\left(\bigcup_{u \in \mathcal A_1} \Des(u)
\right) \cap \{ j \in T \mid j \ge 1\}
 = \{ 2,3\} \cap \{ 1,4,5\} = \emptyset,\\
&\left(\bigcup_{u \in \mathcal A_2} \Des(u)
\right) \cap \{ j \in T \mid j \ge 2\}
= \{1,2,3\} \cap \{4,5\} = \emptyset,\\
&\left(\bigcup_{u \in \mathcal A_3} \Des(u)
\right) \cap \{ j \in T \mid j \ge 3\}
= \{1,2\} \cap \{4,5\} = \emptyset, \\
&\left(\bigcup_{u \in \mathcal A_4} \Des(u)
\right) \cap \{ j \in T \mid j \ge 4\}
= \{1,2\} \cap \{4,5\} = \emptyset, \\
&\left(\bigcup_{u \in \mathcal A_5} \Des(u)
\right) \cap \{ j \in T \mid j \ge 5\}
= \{1,2\} \cap \{5\} = \emptyset.
\end{split}
\]
\end{example}

In the remaining part of this section, we provide a
\textit{geometric} construction of the permutation module decomposition of
$H^2(\Hess(S,h))$ exhibited in Theorem~\ref{thm:Chow}.
We separate the set~$[n-1]$ of indices into
the following two families:
$i \in T$ or $i \notin T$.

For $1 \leq i \leq n-1$, define a subset $P_i$ of $\mathfrak{S}_n$ by
\begin{equation}\label{eq_def_Pi}
P_i \colonequals  P_{\wi{i}}= \{ u \in \mathfrak{S}_n \mid G_{u,h} = G_{\wi{i},h}\}.
\end{equation}
By Propositions~\ref{prop:type and orbit} and \ref{prop:wi_elements_in_Gh1}, we have
$\{u \in \mathfrak{S}_n \mid \ell_h(u) = 1\} = P_1 \sqcup \cdots \sqcup
P_{n-1}$, and moreover,
\begin{equation}\label{eq_description_of_Pi}
P_i = \{ u \in \mathfrak{S}_n \mid \ell_h(u) = 1\} \cap \{ u \in \mathfrak{S}_n \mid \Des(u) = \{i\}\}.
\end{equation}
\begin{proposition}\label{prop_elements_in_Pi}
	For $1 \leq i \leq n-1$, we describe $P_i$ as follows.
	\begin{enumerate}
	\item If $i \in T$, then
	\[
	P_i = \begin{cases}
		\{\wi{i}\} & \text{ if } i+1 \in T \cup \{n\}, \\
		\{  s_{j+1} s_{j+2} \cdots s_{n-1} \wi{i} \mid i \leq j < n\}  & \text{ if
			} i+1 \notin T \cup \{n\}.
		\end{cases}
	\]
	\item If $i \notin T$, then
\[
P_i = \begin{cases}
\{ s_j s_{j-1} \cdots s_{1} \wi{i} \mid 0 \leq  j < i\}
& \text{ if }i+1 \in T \cup \{n\}, \\
\{ u \in \mathfrak{S}_n \mid \Des(u) = \{i\}\}
& \text{ if }i+1 \notin T \cup \{n\}.
\end{cases}
\]
	\end{enumerate}
\end{proposition}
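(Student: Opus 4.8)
The plan is to invoke the identification \eqref{eq_description_of_Pi}, which reduces the proposition to enumerating all $u\in\mathfrak S_n$ with $\ell_h(u)=1$ and $\Des(u)=\{i\}$. Such a $u$ is a union of two increasing runs: with $A=\{u(1),\dots,u(i)\}$ and $B=\{u(i+1),\dots,u(n)\}$ we have $u(1)<\cdots<u(i)$ and $u(i+1)<\cdots<u(n)$, so $\max A=u(i)$, $\min B=u(i+1)$, and every inversion of $u$ is a spanning pair $(j,k)$ with $j\le i<k$ and $u(j)>u(k)$. Since $i+1\le h(i)$, the pair $(i,i+1)$ is always such an inversion and is always counted in $\ell_h(u)$; hence my first reduction is
\[
\ell_h(u)=1 \iff (i,i+1)\text{ is the only spanning pair }(j,k)\text{ with }u(j)>u(k)\text{ and }k\le h(j).
\]

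Next I would split the remaining spanning pairs into a left part ($j<i$) and a right part ($j=i$, $k>i+1$), and read off from the membership of $i$ and $i+1$ in $T$ which of them can be counted. For $j\le i-1$, monotonicity of $h$ gives $h(j)\le h(i-1)$: if $i\notin T$ then $h(i-1)=i<k$ and no left pair is ever counted, while if $i\in T$ then $h(i-1)\ge i+1$ and the pair $(i-1,i+1)$ must be excluded, forcing $u(i-1)<u(i+1)$. A right pair $(i,k)$ is counted iff $k\le h(i)$: if $i+1\notin T\cup\{n\}$ then $h(i)=i+1$ and no right pair is counted, while if $i+1\in T$ then $h(i)\ge i+2$ and the pair $(i,i+2)$ must be excluded, forcing $u(i)<u(i+2)$. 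When $i+1=n$ the right part is empty, but there $B$ is a singleton and will force the same normal form as an active right constraint.

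I would then convert the surviving inequalities into the combinatorial shape of $A$ and $B$. An active left constraint $u(i-1)<\min B$ puts every element of $A$ except $\max A$ below $\min B$; since the values below $\min B$ all lie in $A$, counting them pins down $A\setminus\{\max A\}=\{1,\dots,i-1\}$ and $\min B=i$. An active right constraint $\max A<$ (second smallest element of $B$) forces $\max A=i+1$. Assembling the four cases then yields the statement: when $i\in T$ and $i+1\in T\cup\{n\}$ both constraints act and $u=s_i=\wi i$, i.e.\ (1a); when $i\in T$ and $i+1\notin T\cup\{n\}$ only the left acts, so $\max A$ is free in $\{i+1,\dots,n\}$ and writing $\max A=j+1$ recovers (1b); when $i\notin T$ and $i+1\in T\cup\{n\}$ only the right acts, so $\max A=i+1$ while $\min B$ ranges over $\{1,\dots,i\}$, recovering (2a); and when $i\notin T$ and $i+1\notin T\cup\{n\}$ neither acts, so $P_i=\{u\mid\Des(u)=\{i\}\}$, which is (2b).

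The main difficulty is bookkeeping at the boundaries rather than any isolated hard step. In particular I would treat $i=1$ (no position $i-1$, so the left part is vacuous and the needed $\min B=i=1$ must instead be extracted from the right part together with $|A|=1$) and $i+1=n$ (empty right part, where $|B|=1$ forces $\max A=i+1$) separately, checking that each still lands in the asserted normal form. The other delicate point is matching the structural normal forms obtained in (1b) and (2a) to the explicit products $s_{j+1}\cdots s_{n-1}\wi i$ and $s_j\cdots s_1\wi i$; I would do this either by computing the effect of these left multiplications on $\wi i$ directly, or by noting that each such product lies in $[\wi i]_h=P_i$ by Lemma~\ref{lemma_graph_type}, that the listed elements are pairwise distinct, and that their number equals $|P_i|$ as counted above, so the lists exhaust $P_i$.
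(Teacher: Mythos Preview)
Your approach is correct but genuinely different from the paper's. The paper proves the proposition constructively: starting from $\wi{i}$, it repeatedly applies simple reflections $s_j$ along dashed arrows $u \stackrel{s_j}{\dasharrow} s_j u$, invoking Proposition~\ref{prop:type and orbit pre} to guarantee that this process sweeps out all of $P_i$, and analyzing in each case which reflections are available and when the process terminates (namely at $s_i$). Only in Case~2-2 does the paper argue via~\eqref{eq_description_of_Pi} as you do.

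You instead use~\eqref{eq_description_of_Pi} uniformly, reducing everything to the enumeration of $u$ with $\Des(u)=\{i\}$ and $\ell_h(u)=1$, and then reading off from the membership of $i$ and $i+1$ in $T$ which of the two additional constraints $u(i-1)<u(i+1)$ and $u(i)<u(i+2)$ is active. This is more elementary: it does not rely on Proposition~\ref{prop:type and orbit pre} or on tracking stabilizers, and it gives the shape of $A=\{u(1),\dots,u(i)\}$ directly. The paper's route, on the other hand, makes the expressions $s_{j+1}\cdots s_{n-1}\wi{i}$ and $s_j\cdots s_1\wi{i}$ appear naturally from the construction, whereas you must match them to your normal forms at the end (your two suggested methods both work, though the second requires checking that each step is indeed a dashed arrow). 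One small point worth making explicit in your write-up: when you say the left constraint forces $u(i-1)<u(i+1)$, you should note that this single inequality kills \emph{all} left inversions $(j,k)$ with $j\le i-1$, $k\ge i+1$ (by monotonicity of the two runs), not just the pair $(i-1,i+1)$; and symmetrically for the right.
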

\begin{proof}
Before providing case-by-case analysis, we recall from
Proposition~\ref{prop:type and orbit pre} that for any element $u \in P_i$, there
exists a sequence of simple reflections $s_{i_1},\dots,s_{i_r}$ such that $\wi{i}
\stackrel{s_{i_1}}{\dasharrow} \cdots \stackrel{s_{i_r}}{\dasharrow} u$.
Accordingly, by Lemma~\ref{lemma_graph_type}, to obtain all elements in $P_i$, it is enough to
consider simple reflections $s_j$ satisfying that $u \stackrel{s_j}{\dasharrow}
s_ju$ repeatedly for $u \in P_i$.
We notice that we may assume that $n > 2$ because when $n = 2$,
there is only one Hessenberg function satisfying $h(i) \geq i+1$, which is $h =
(2,2)$. In this case, the Hessenberg variety is the full flag variety $\flag(\C^2)$ and we have $T = \{1\}$ and $P_1 = \{ w^{[1]}\} = \{21\} $. This proves the statement when $n = 2$.

We provide a proof when $n>2$ using case-by-case analysis and recall the descriptions of~ $\wi{i}$ from
Definition~\ref{def:permutations} for each case.

\smallskip
\noindent \textbf{\textsf{Case 1-1.}} Suppose that $i \in T$ and $i+1 \in T
\cup \{n\}$.
Then, we have
\[
\wi{i} = 1\, 2\,\cdots i-1 \,\,i+1\, \vred \, i \,\, i+2\,\cdots\,\, n \in
\Ghone.
\]
Indeed, $\wi{i} = s_i$. In this case, there is no simple reflection $s_j$
satisfying
$s_i \stackrel{s_j}{\dasharrow} s_j s_i$. Therefore, $P_i = \{
\wi{i}\} = \{s_i\}$, proving the claim.

\smallskip
\noindent \textbf{\textsf{Case 1-2.}} Suppose that $i \in T$ and $i+1 \notin
T \cup \{n\}$.
Then we have
\[
\wi{i} = 1\,2\,\cdots i-1\,\, n \,\vred \, i\,\, i+1\,\cdots \, n-1 \in \Ghone.
\]
Since the stabilizer subgroup of $\swh{\wi{i}}$ is
$\mathfrak{S}_{(n-1,1)}$ by Proposition~\ref{prop:stabilizer_12}, it is enough to consider
the action of $s_{n-1}$ on $\wi{i}$.
\[
s_{n-1} \wi{i} = 1\,2\,\cdots i-1\,\, n-1 \,\vred \, i\,\cdots \, n-2 \,\, n.
\]
Since $\wi{i} \dasharrow s_{n-1} \wi{i}$, we obtain $s_{n-1} \wi{i} \in P_i$ by Lemma~\ref{lemma_graph_type}.

On the other hand, the class $\swh{s_{n-1}\wi{i}} = s_{n-1} \cdot \swh{\wi{i}}$ has the stabilizer $s_{n-1} \mathfrak{S}_{(n-1,1)} s_{n-1}=
\mathfrak{S}_{\{1,2,\dots,n-2,n\}} \times \mathfrak{S}_{\{n-1\}}$.
Therefore, $s_{n-2}$ is the only element we have to care about and
it produces an element $s_{n-2}s_{n-1} \wi{i} \in P_i$ because $\ell_h(s_{n-2}s_{n-1} \wi{i}) = 1$ and $\Des(s_{n-2}s_{n-1} \wi{i}) = \{ i \}$. Continuing a similar
process, we obtain
\[
s_{j} s_{j+1} \cdots s_{n-1} \wi{i} \in P_i \quad \text{ for }i < j \leq n.
\]
For the case when $j = i+1$, we obtain
\[
s_{i+1} \cdots s_{n-1} \wi{i} = 1\,2\,\cdots i-1\,\, i+1 \,\vred \, i\,\cdots
\, n-1 \,\, n = s_i.
\]
Since there is no simple reflection $s_j$ satisfying $s_i
\stackrel{s_j}{\dasharrow} s_j s_i$, we obtain
\[
P_i = \{  s_{j+1} \cdots s_{n-1} \wi{i} \mid i \leq j < n\},
\]
which proves the claim.

\smallskip
\noindent \textbf{\textsf{Case 2-1.}} Suppose that $i \notin T$ and $i+1 \in T
\cup \{n\}$.
Then we have
\[ \wi{i} =2\,\cdots \,i \,\,i+1\,\vred\, 1\,\,i+2 \, \cdots \, n \,\,  \in \Ghone.
\]	
In this case, the stabilizer subgroup of $\swh{\wi{i}}$
is $\mathfrak{S}_{(1,n-1)}$ by
Proposition~\ref{prop:stabilizer_34}. Hence, it is enough to consider the action
of $s_1$:
\[
s_1 \wi{i} = 1\,\,3\,\cdots \,i \,\,i+1\,\vred\, 2\,\,i+2 \, \cdots \, n  \,\,\in P_i.
\]
Continuing a similar process as in \textsf{Case~1-2}, we obtain
\[
s_j s_{j-1} \cdots s_1 \wi{i} \in P_i \quad \text{ for }0 \leq j < i.
\]
For the case when $j = i-1$, we obtain
\[
s_{i-1} s_{i-2} \cdots s_1 \wi{i} = 1\,\,2\,\cdots \,i-1 \,\,i+1\,\vred\,
i\,\,i+2 \, \cdots \, n  = s_i.
\]
Since there is no simple reflection $s_j$ satisfying $s_i
\stackrel{s_j}{\dasharrow} s_j s_i$, we obtain
\[
P_i = \{ s_j s_{j-1} \cdots s_1 \wi{i}\mid 0 \leq  j < i\},
\]
proving the claim.

\smallskip
\noindent \textbf{\textsf{Case 2-2.}} Suppose that $i \notin T$ and $i+1 \notin
T\cup \{n\}$.
Because of~\eqref{eq_description_of_Pi}, it is
enough to show that any element having one descent at $i$ satisfies $\ell_h(u)=1$.
Since the Hessenberg function $h$ is weakly increasing and $h(i-1)=i$, we have $h(k) \leq i$
for $k \leq i-1$. Accordingly, any $u \in \mathfrak{S}_n$ such that $\Des(u) = \{i\}$ satisfies $\ell_h(u) =1$, proving the claim.
\end{proof}

\begin{example}\label{example_235666_Pi}
Let $h = (2,3,5,6,6,6)$.
By Definition~\ref{def:permutations}, we have
\[
\wi{1} = 6 \vred 12345, \quad
\wi{2} = 56 \vred 1234, \quad
\wi{3} = 234 \vred 156, \quad
\wi{4} = 1235 \vred 46, \quad
\wi{5} = 12346 \vred 5.
\]
By Proposition~\ref{prop_elements_in_Pi}, we obtain
\[
\begin{split}
P_1 &= \{ s_{j+1} s_{j+2} \cdots s_5 \wi{1} \mid 1 \le j < 6\}
= \{ 6\vred 12345, 5\vred 12346, 4\vred 12356, 3\vred 12456, 2\vred 13456\}, \\
P_2 &= \{ u \in \mathfrak{S}_6 \mid \Des(u) = \{2\}\} \\
&= \{13\vred 2456, 14\vred 2356, 15\vred 2346, 16\vred 2345, 23\vred 1456, 24\vred 1356, 25\vred 1346, \\
& \qquad 26\vred1345, 34\vred1256, 35\vred1246, 36\vred1245,
45\vred1236, 46\vred1235, 56\vred1234 \}, \\
P_3 &= \{ s_j s_{j-1} \cdots s_1 \wi{3} \mid 0 \le j < 3 \} = \{
234\vred156, 134\vred256, 124\vred356\}, \\
P_4 &= \{ \wi{4} \} = \{ 1235\vred46\}, \\
P_5 &= \{ \wi{5} \} = \{ 12346\vred5\}.
\end{split}
\]
\end{example}

Proposition~\ref{prop_elements_in_Pi} leads us to compute the cardinality of the set $\{ u \in \mathfrak{S}_n \mid \ell_h(u)=1\}$, which is the same as the dimension of $H^2(\Hess(S,h))$.
For $i \in [n-1]$, we define $d_i$ to be
\begin{equation}\label{eq_def_di}
d_i = \begin{cases}
1 & \text{ if } i \in T, \\
n & \text{ if } i \notin T \text{ and } i+1 \in T \cup \{n\}, \\
{n \choose i} & \text{ if } i \notin T \text{ and } i+1 \notin T \cup \{n\}.
\end{cases}
\end{equation}
\begin{proposition}\label{prop_dimension}
The cardinality of the set $\{ u \in \mathfrak{S}_n \mid \ell_h(u)=1\}$ is $\sum_{i=1}^{n-1} d_i$, which is the same as the dimension of $H^2(\Hess(S,h))$.
\end{proposition}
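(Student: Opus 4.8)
The plan is to reduce the statement to a single combinatorial identity and then verify it by grouping the indices $i \in [n-1]$ according to the runs of $T$. The clause asserting equality with $\dim_\C H^2(\Hess(S,h))$ requires no work: by Proposition~\ref{prop_basics}\eqref{dim} the coefficient of $q^2$ in $\sum_{w} q^{2\ell_h(w)}$ is $\lvert \{w \in \mathfrak S_n \mid \ell_h(w) = 1\}\rvert$, so this cardinality is by definition $\dim_\C H^2(\Hess(S,h))$. Thus it remains to prove $\lvert \{u \mid \ell_h(u)=1\}\rvert = \sum_{i=1}^{n-1} d_i$, and by Propositions~\ref{prop:type and orbit} and~\ref{prop:wi_elements_in_Gh1} the left-hand set is the disjoint union $P_1 \sqcup \cdots \sqcup P_{n-1}$, so everything comes down to $\sum_i \lvert P_i\rvert = \sum_i d_i$.

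First I would read off the cardinalities $\lvert P_i\rvert$ from the explicit descriptions in Proposition~\ref{prop_elements_in_Pi}, obtaining $\lvert P_i\rvert = 1$ when $i \in T$ and $i+1 \in T \cup \{n\}$, $\lvert P_i\rvert = n-i$ when $i \in T$ and $i+1 \notin T \cup \{n\}$, $\lvert P_i\rvert = i$ when $i \notin T$ and $i+1 \in T \cup \{n\}$, and $\lvert P_i\rvert = \binom{n}{i}-1$ when $i \notin T$ and $i+1 \notin T \cup \{n\}$. Comparing with the definition of $d_i$, the two quantities agree in the first case, whereas in the remaining three a discrepancy appears; the crux is that these discrepancies cancel upon summation.

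To organize the cancellation I would set $\epsilon_i = 1$ if $i \in T$ and $\epsilon_i = 0$ otherwise for $i \in [n-1]$, with the convention $\epsilon_n = 1$. Since $1 \in T$ and $\epsilon_n = 1$, the word $\epsilon_1 \cdots \epsilon_n$ starts and ends with $1$, so the indices with $\epsilon = 0$ split into maximal runs $\{a, a+1, \dots, b\}$, each flanked by $\epsilon_{a-1} = \epsilon_{b+1} = 1$. For a fixed such run the index $a-1$ falls into the case $\lvert P\rvert = n-i$, the indices $a, \dots, b-1$ into the case $\lvert P\rvert = \binom{n}{i}-1$, and the index $b$ into the case $\lvert P\rvert = i$; a direct computation then gives
\[
\sum_{i=a-1}^{b} d_i = 1 + \sum_{i=a}^{b-1} \binom{n}{i} + n = \sum_{i=a-1}^{b} \lvert P_i\rvert,
\]
since the $b-a$ subtracted units from the interior are balanced by the endpoint contributions $n-(a-1)$ and $b$ standing in place of $1$ and $n$. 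Every index not covered by such a run satisfies $\epsilon_i = \epsilon_{i+1} = 1$, where $d_i = \lvert P_i\rvert = 1$. Summing over all runs and all these remaining indices yields $\sum_i d_i = \sum_i \lvert P_i\rvert$.

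The part demanding the most care is the bookkeeping in this last step: one must check that the run decomposition accounts for each of the three nontrivial cases exactly once, the left endpoints $a-1$ and right endpoints $b$ being in bijection with the maximal $\epsilon = 0$ runs, so that no index is omitted or double-counted. This is the only place where the global shape of $T$, rather than a single value of $i$, is used, but it is otherwise routine.
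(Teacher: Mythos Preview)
Your proposal is correct and follows essentially the same strategy as the paper: both read off $\lvert P_i\rvert$ from Proposition~\ref{prop_elements_in_Pi} and then partition $[n-1]$ into blocks on which $\sum d_i = \sum \lvert P_i\rvert$ is checked directly. Your blocks $\{a-1,a,\dots,b\}$ attached to each maximal $\epsilon=0$ run, together with the leftover indices where $\epsilon_i=\epsilon_{i+1}=1$, coincide exactly with the paper's blocks $\mathcal B_a=[t_a,t_{a+1})$ (the latter in the cases $\lvert\mathcal B_a\rvert>1$ and $\lvert\mathcal B_a\rvert=1$, respectively), so the two arguments are the same up to notation.
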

\begin{proof}
By Proposition~\ref{prop_elements_in_Pi}, we know that the cardinality of each $P_i$ is given by
	\begin{equation}\label{eq_number_of_Pi}
	\lvert P_i \rvert = \begin{cases}
		1 & \text{ if } i \in T \text{ and } i+1 \in T \cup \{n\}, \\
		n-i & \text{ if } i \in T \text{ and } i+1 \notin T \cup \{n\}, \\
		i & \text{ if } i \notin T \text{ and } i+1 \in T \cup \{n\}, \\
		{n \choose i}-1 & \text{ if } i \notin T \text{ and } i+1 \notin T \cup \{n\}.
	\end{cases}
	\end{equation}
We consider indices $i \in T$.
We denote by $T = \{t_1 < t_2 < \dots < t_s\}$. Here, $s = \lvert T \rvert$.
Using the elements in $T$, we consider a partition
$(\mathcal{B}_1,\dots,\mathcal{B}_s)$ of the set $[n-1]$, where
\[
\mathcal{B}_a = [t_a, t_{a+1}) \quad \text{ for }1 \leq a \leq s.
\]
Here, we set $t_{s+1} = n$.
To prove the proposition, it is enough to prove that
\begin{equation}\label{eq_dim_claim}
\sum_{i \in \mathcal B_a} d_i = \sum_{i \in \mathcal B_a} \lvert P_i \rvert \quad \text{ for } 1 \le a \le s.
\end{equation}

We first consider the case when $\lvert \mathcal B_a \rvert = 1$, i.e., $\mathcal B_a = \{t_a\}$. In this case, $t_a \in T$ and $t_{a}+1 = t_{a+1}$ is again contained in $T \cup \{n\}$. Accordingly, $d_{t_a} = 1$ by~\eqref{eq_def_di} and $\lvert P_{t_a} \rvert = 1$ by~\eqref{eq_number_of_Pi}, providing~\eqref{eq_dim_claim}.

Now we suppose that $\lvert \mathcal B_a \rvert =x>1$, i.e., $\mathcal B_a = \{t_a, t_{a}+1,\dots, t_a + x-1\}$. In this case, $t_a \in T$ and $t_{a}+x = t_{a+1} \in T \cup \{n\}$, whereas $t_{a}+1,\dots,t_a+x-1 \notin T \cup \{n\}$. Considering $\vert P_i \rvert$ for $i \in \mathcal B_a$, by~\eqref{eq_number_of_Pi}, we obtain
\[
\lvert P_i \rvert
= \begin{cases}
	n-t_a & \text{ if } i = t_a, \\
	{n \choose i} -1 & \text{ if } t_a < i < t_a + x -1, \\
	t_a + x -1 & \text{ if } i = t_a + x -1.
\end{cases}
\]
This provides
\[
\begin{split}
\sum_{i \in \mathcal B_a} \lvert P_i \rvert &=
n-t_a +  \sum_{t_a < i < t_a+x-1} \left({n \choose i} - 1 \right)
+ t_a + x -1 \\
&= n + x -1 + \sum_{t_a < i < t_a + x -1} \left({n \choose i} -1 \right)\\
&= 1 +
\sum_{t_a < i < t_a + x -1}  {n \choose i}
+ n \\
&= \sum_{i \in \mathcal B_a} d_i,
\end{split}
\]
proving~\eqref{eq_dim_claim}. Hence, we are done.
\end{proof}

\section{Geometric construction}\label{sec_geometric_construction}
In this section, we consider a geometric construction of the permutation module representation of $H^2(\Hess(S,h))$ using the BB basis. Before going on, we recall the result by Chow~\cite{Chow_h2}.
Chow showed that $H^2(\Hess(S, h))$ is decomposed as a sum of permutation
modules by computing the $e$-expansion of the chromatic quasisymmetric function corresponding to the Hessenberg function $h$, which is equivalent to the following theorem.
\begin{theorem}[\cite{Chow_h2}]\label{thm:Chow}
 Let $h \colon [n] \to [n]$ be a Hessenberg function such that $h(i)\geq i+1$ for all $i<n$ and let $T = T_h$. Then,
 as $\mathfrak S_n$-modules,
\[
H^2(\Hess(S, h))\cong \bigoplus_{i=1}^{n-1} M^{\alpha^{i}}\,,
\]
where for $1\leq i\leq n-1$,
\[
 \alpha^i=\begin{cases}
	(n) & \text{ if } i \in T, \\
	(1, n-1) & \text { if } i \notin T \text{ and } i+1 \in T \cup \{n\}, \\
	(i, n-i) & \text { if } i \notin T \text{ and } i+1 \notin T \cup \{n\}.
		\end{cases}
\]
\end{theorem}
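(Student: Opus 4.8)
The plan is to realize the isomorphism geometrically, following the strategy behind Theorem~\ref{thm_H2}: for each index $i \in [n-1]$ I will produce a class $\widehat\sigma_{\wi{i},h}$ lying in $M(\swh{\wi{i}})$ that is fixed by the Young subgroup $\mathfrak S_{\alpha^i}$, so that the cyclic module $M(\widehat\sigma_{\wi{i},h})$ is automatically a \emph{quotient} of the permutation module $M^{\alpha^i} = 1\!\!\uparrow_{\mathfrak S_{\alpha^i}}^{\mathfrak S_n}$, and in particular $\dim_{\C} M(\widehat\sigma_{\wi{i},h}) \le d_i$. By Proposition~\ref{prop_dimension} the target dimensions satisfy $\sum_{i=1}^{n-1} d_i = \dim_{\C} H^2(\Hess(S,h))$, with $d_i \in \{1, n, \binom{n}{i}\}$ matching $\dim_{\C} M^{\alpha^i}$ case by case. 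Hence it suffices to choose, for each $i$, a candidate spanning set $B_i \subset M(\widehat\sigma_{\wi{i},h})$ of exactly $d_i$ vectors and to prove that the disjoint union $\bigsqcup_i B_i$ is linearly independent in $H^2(\Hess(S,h))$: independence forces each $B_i$ to be a basis of $M(\widehat\sigma_{\wi{i},h})$, whence $\dim_{\C} M(\widehat\sigma_{\wi{i},h}) = d_i$ and the quotient equals the full $M^{\alpha^i}$, while the count $\sum_i d_i = \dim_{\C} H^2$ simultaneously forces $\bigsqcup_i B_i$ to be a basis and the sum $\sum_i M(\widehat\sigma_{\wi{i},h})$ to be both direct and all of $H^2(\Hess(S,h))$.

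For the two cases with $i \notin T$ I would take $\widehat\sigma_{\wi{i},h} = \swh{\wi{i}}$ itself. By Proposition~\ref{prop:stabilizer_34} its stabilizer contains $\mathfrak S_{(1,n-1)}$ when $i+1 \in T \cup \{n\}$ and contains $\mathfrak S_{(n-i,i)}$ when $i+1 \notin T \cup \{n\}$, so the quotient reasoning applies with $d_i = n$ and $d_i = \binom{n}{i}$ respectively. For $B_i$ I take an orbit of coset representatives; Proposition~\ref{prop:simple action}(1) shows that each dashed-arrow simple reflection merely permutes BB classes, so sliding $\swh{\wi{i}}$ across its block produces $d_i$ classes whose ``leading'' BB terms are indexed by $P_i$ as described in Proposition~\ref{prop_elements_in_Pi}.

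For $i \in T$ the target $M^{(n)}$ is the trivial representation, so I must find an $\mathfrak S_n$-invariant vector inside $M(\swh{\wi{i}})$ and set $B_i = \{\widehat\sigma_{\wi{i},h}\}$, a single vector. Since the number of trivial constituents of $H^2$ is $\dim_{\C} H^2(\Hess(S,h))^{\mathfrak S_n} = \lvert \mathcal G_h^1 \rvert = n-1$ by Propositions~\ref{prop:number of generators} and~\ref{prop:wi_elements_in_Gh1}, exactly one invariant line is expected per such $i$. I would exhibit $\widehat\sigma_{\wi{i},h}$ as the explicit invariant combination of BB classes of Definition~\ref{def_widehat_swh} and verify $s_k\cdot \widehat\sigma_{\wi{i},h} = \widehat\sigma_{\wi{i},h}$ for all $k$ using the action formula of Proposition~\ref{prop:simple action}, where the correction terms $\tau_u$ are controlled by the sets $\mathcal A_i$ of Proposition~\ref{proposition_Ai} and Lemma~\ref{lemma_Du_property}.

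The hard part is the linear independence of $\bigsqcup_i B_i$, and it is genuinely delicate because the BB-basis partition and the permutation-module decomposition do not align index by index: within a block $\mathcal B_a = [t_a, t_{a+1})$ one has $\sum_{i\in\mathcal B_a}\lvert P_i\rvert = \sum_{i \in \mathcal B_a} d_i$ but $\lvert P_i \rvert \ne d_i$ in general (for instance $\lvert P_{t_a}\rvert = n - t_a$ against $d_{t_a}=1$). Thus the permutation submodules cut across the sets $P_i$, and the necessary redistribution of BB basis elements between neighbouring indices is carried precisely by the correction terms $\tau_u$ from Proposition~\ref{prop:simple action}(3). I expect the crux to be a single block-by-block triangularity argument: after ordering the BB basis $\{\swh{u} \mid u \in \bigsqcup_{i \in \mathcal B_a} P_i\}$ appropriately, the expansion of the vectors in $\bigsqcup_{i\in\mathcal B_a} B_i$ should form a unitriangular, hence invertible, transition matrix. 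Together with Proposition~\ref{prop_dimension} this yields the direct sum decomposition $H^2(\Hess(S,h)) = \bigoplus_{i=1}^{n-1} M(\widehat\sigma_{\wi{i},h}) \cong \bigoplus_{i=1}^{n-1} M^{\alpha^i}$.
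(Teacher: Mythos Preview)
Your overall framework---produce $\widehat\sigma_{\wi{i},h}$ with $\mathfrak S_{\alpha^i}\subset\Stab(\widehat\sigma_{\wi{i},h})$, use the orbit--stabilizer bound $\dim M(\widehat\sigma_{\wi{i},h})\le d_i$, and close the argument by a dimension count against Proposition~\ref{prop_dimension}---is exactly the framework of the paper's own geometric proof (Theorem~\ref{thm_H2}). The difference is in the tactical step you choose to finish: you aim to prove that a chosen set $\bigsqcup_i B_i$ of $\sum_i d_i$ vectors is linearly \emph{independent}, whereas the paper proves the dual statement that $M=\sum_i M(\widehat\sigma_{\wi{i},h})$ \emph{spans} $H^2(\Hess(S,h))$, by showing that every Bia{\l}ynicki--Birula class $\swh{u}$ lies in $M$.

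Your proposal has a genuine gap at the step you yourself flag as ``the hard part'': you only \emph{expect} a block-by-block unitriangular transition matrix, and give no mechanism for producing the required ordering or for controlling the correction terms $\tau_u$ that arise in Proposition~\ref{prop:simple action}(3). These corrections are precisely what spread the vectors in $B_i$ across several $P_j$'s, and writing them out explicitly in the BB basis is awkward. The paper avoids this entirely by working modulo $M$: it inducts on the position of $i$ inside $T=\{t_1<\dots<t_s\}$, and at each stage uses Lemmas~\ref{lemma_main_2} and~\ref{lemma_main_3} to show that the $\tau$-corrections in $s_i\cdot\swh{s_i}$ land in pieces of $M$ already known to contain their BB classes, so that $s_i\cdot\swh{s_i}\equiv\swh{s_i}\pmod M$. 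From there a short computation with~\eqref{eq_rho_sum_in_M}--\eqref{eq_trivial_case_3} shows all $\swh{u}$ with $u\in P_i$ lie in $M$. This ``modulo~$M$'' induction is the substitute for your hoped-for triangularity and is what actually makes the argument close. A second, smaller gap: for $i\in T$ you invoke the class of Definition~\ref{def_widehat_swh} and say you would verify full $\mathfrak S_n$-invariance, but that class is by construction only $\mathfrak S_{\mathbf K_{\wi{i},h}}$-invariant; the paper instead takes the full symmetrization $\sum_{v\in\mathfrak S_n} v\cdot\swh{\wi{i}}$ (Definition~\ref{def_hat_wi}), which is invariant automatically but whose nonvanishing is again established only as a by-product of the spanning argument, not checked in advance.
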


\begin{example}\label{example:23666788} 	
Let $n=8$ and $h=(2,3,6,6,6,7,8,8)$.
Then, $T = T_h = \{1,4,5\}$ and we obtain
\begin{gather*}
\wi{1} = 8\vred 1234567, \quad
\wi{2} = 78\vred 123456,\quad
\wi{3} = 234\vred 15678,\quad
\wi{4} = 1235\vred 4678, \\
\wi{5} = 12348\vred 567,\quad
\wi{6} = 345678\vred 12,\quad
\wi{7} = 2345678\vred 1.
\end{gather*}
Moreover, we have
\[
\alpha^1=(8),\quad \alpha^2=(2, 6), \quad\alpha^3=(1, 7), \quad\alpha^4=(8),\quad
\alpha^5=(8), \quad\alpha^6=(6, 2),\quad \alpha^7=(1, 7).
\]
We therefore have
\[
H^2(\Hess(S, h))\cong  M^{(8)}\oplus M^{(2, 6)} \oplus M^{(1, 7)}
\oplus  M^{(8)}  \oplus  M^{(8)}\oplus M^{(6, 2)}\oplus
M^{(1, 7)}\,
\]
by Theorem~\ref{thm:Chow}.
\end{example}

We will construct a basis $\{\hatswh{w} \mid w \in \Ghone\}$ of
$H^2(\Hess(S,h))$ generating permutation modules in the right hand side of the
formula in Theorem~\ref{thm:Chow} by modifying the basis element $\swh{\wi{i}}$.
We are going to modify $\swh{\wi{i}}$ in such a way that its stabilizer subgroup agrees
with the Young subgroup $\mathfrak{S}_{\alpha^i}$, where $\alpha^i$ is the
composition defined in Theorem~\ref{thm:Chow}.
\begin{definition}\label{def_hat_wi}
Let $h \colon [n] \to [n]$ be a Hessenberg function such that $h(i) \geq i +1$ for all $i < n$.
For $ 1 \leq i \leq n-1$, we define $\hatswh{i}$ as follows.
\[
\hatswh{i} \colonequals
\begin{cases}
\displaystyle \sum_{v \in \mathfrak{S}_n} v \cdot  \swh{\wi{i}} & \text{ if }i \in T,\\
\swh{\wi{i}} &\text{ otherwise}.
\end{cases}
\]
\end{definition}

Now we state the main theorem of this section which provides a \textit{geometric} construction of the
permutation module decomposition of $H^2(\Hess(S,h))$ exhibited in
Theorem~\ref{thm:Chow}.
\begin{theorem}\label{thm_H2}
	For $1 \leq i \leq n-1$, let $M_{i,h} = \C \mathfrak{S}_n(\hatswh{i})$ be the $\mathfrak{S}_n$-module generated by $\hatswh{i}$.
Then we have the following.
\begin{enumerate}
\item $M_{i,h} \cong M^{\alpha^i}$;
\item $\Stab(\hatswh{i}) = \mathfrak{S}_{\alpha^i}$; and
\item $\displaystyle H^2(\Hess(S,h)) =\bigoplus_{i=1}^{n-1} M_{i,h}$.
\end{enumerate}
\end{theorem}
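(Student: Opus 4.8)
The plan is to deduce all three statements from two inputs: a \emph{containment} of stabilizers, which produces surjections from the relevant permutation modules, and a \emph{spanning} statement $\sum_{i=1}^{n-1}M_{i,h}=H^2(\Hess(S,h))$; the dimension formula of Proposition~\ref{prop_dimension} then closes everything by counting. First I would dispose of statement (2). For $i\in T$ it is immediate, since $\hatswh{i}=\sum_{v\in\mathfrak{S}_n}v\cdot\swh{\wi{i}}$ is manifestly $\mathfrak{S}_n$-invariant, so $\Stab(\hatswh{i})=\mathfrak{S}_n=\mathfrak{S}_{(n)}$; for $i\notin T$ one has $\hatswh{i}=\swh{\wi{i}}$ and the stabilizer was already computed in Propositions~\ref{prop:stabilizer_12} and~\ref{prop:stabilizer_34} together with Corollary~\ref{cor_stab_nn} to be the Young subgroup $\mathfrak{S}_{\alpha^i}$ (equivalently its rearrangement $\mathfrak{S}_{(n-i,i)}$ in the last case). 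Each such containment $\mathfrak{S}_{\alpha^i}\subseteq\Stab(\hatswh{i})$ yields an $\mathfrak{S}_n$-equivariant surjection $M^{\alpha^i}\twoheadrightarrow M_{i,h}$ sending a coset generator to $\hatswh{i}$, and hence the bound $\dim_{\C}M_{i,h}\le d_i$, where $d_i=\dim_{\C}M^{\alpha^i}$ as in~\eqref{eq_def_di}.

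The harder input is spanning. The point is that for $i\notin T$ we have $M_{i,h}=M(\swh{\wi{i}})$, which by Proposition~\ref{prop:type and orbit} already contains every class $\swh{u}$ with $u\in P_i$; since $\{u:\ell_h(u)=1\}=\bigsqcup_i P_i$ and $P_i=\{u:\ell_h(u)=1,\ \Des(u)=\{i\}\}$, this accounts for all BB classes whose unique descent lies outside $T$. It remains to realize the classes $\swh{u}$ with $\Des(u)=\{i\}$, $i\in T$, inside $\sum_i M_{i,h}$. I would organize this along the partition $[n-1]=\bigsqcup_a\mathcal{B}_a$, $\mathcal{B}_a=[t_a,t_{a+1})$, from Proposition~\ref{prop_dimension}: within a block only the left endpoint $t_a$ lies in $T$, so the missing classes are exactly $\{\swh{u}:u\in P_{t_a}\}$, which number $n-t_a$. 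These must be recovered from the one-dimensional $M_{t_a,h}=\C\hatswh{t_a}$ together with the correction terms arising when the simple-reflection action of Proposition~\ref{prop:simple action}(3) is applied to the generators $\swh{\wi{i}}$ with $t_a<i<t_{a+1}$. The bookkeeping matches on the nose: the excess $\dim M_{i,h}-\lvert P_i\rvert$ summed over the block, plus the single dimension from $\hatswh{t_a}$, equals $n-t_a$, so the real content is to show that these excess vectors are independent modulo the span of $\{\swh{u}:u\in P_i,\ i\in\mathcal{B}_a\}$ and span the span of $\{\swh{u}:u\in P_{t_a}\}$.

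To control the correction terms I would use the explicit description of $\mathcal{A}_i$ in Proposition~\ref{proposition_Ai} and the expansion of $s_i\cdot\swh{s_i}$ in Corollary~\ref{cor_si_action_on_sigma_si}, and crucially Lemma~\ref{lemma_Du_property}, which guarantees that no descent of any $u\in\mathcal{A}_i$ is a $T$-index $\ge i$. This is exactly what prevents the correction terms from spilling across blocks and confines the new classes produced in $\mathcal{B}_a$ to descents inside $\mathcal{B}_a$; combined with the explicit one-line forms of $P_{t_a}$ in Proposition~\ref{prop_elements_in_Pi}(1), it should produce a unitriangular relationship, under a suitable ordering of $P_{t_a}$, between the excess vectors and the targets $\swh{u}$, $u\in P_{t_a}$, giving the needed invertibility. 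I expect this blockwise triangularity --- tracking precisely which classes the corrections contribute and with which coefficients --- to be the main obstacle, together with the auxiliary nonvanishing $\hatswh{i}\ne0$ for $i\in T$ (which the block count genuinely requires); the latter I would settle by a direct coefficient computation showing a BB coefficient of $\hatswh{i}$ survives the symmetrization.

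Finally I would assemble the pieces. Granting spanning, $\dim_{\C}H^2(\Hess(S,h))\le\sum_{i=1}^{n-1}\dim_{\C}M_{i,h}\le\sum_{i=1}^{n-1}d_i$, while Proposition~\ref{prop_dimension} gives $\dim_{\C}H^2(\Hess(S,h))=\sum_{i=1}^{n-1}d_i$. Hence every inequality is an equality: $\dim_{\C}M_{i,h}=d_i$ forces each surjection $M^{\alpha^i}\twoheadrightarrow M_{i,h}$ to be an isomorphism, which is statement (1), and the equality of the total dimension with the dimension of the sum forces the sum to be direct, which is statement (3). This gives the geometric realization of Chow's decomposition in Theorem~\ref{thm:Chow}.
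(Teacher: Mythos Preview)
Your overall architecture is the paper's: establish the containments $\mathfrak{S}_{\alpha^i}\subseteq\Stab(\hatswh{i})$, prove the spanning $\sum_i M_{i,h}=H^2(\Hess(S,h))$, and let the dimension count of Proposition~\ref{prop_dimension} force all three statements. Two points need repair.

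First, your handling of (2) is circular. Corollary~\ref{cor_stab_nn} is a \emph{consequence} of Theorem~\ref{thm_H2}(2), so you cannot cite it here. All you need up front is the containment $\mathfrak{S}_{\alpha^i}\subseteq\Stab(\hatswh{i})$, and Propositions~\ref{prop:stabilizer_12} and~\ref{prop:stabilizer_34} alone supply that (in case~2(b) of Proposition~\ref{prop:stabilizer_34} they give only the inclusion, which is exactly what is wanted). Once spanning is established, the chain $d_i=\dim M_{i,h}\le \lvert\mathfrak{S}_n\rvert/\lvert\Stab(\hatswh{i})\rvert\le \lvert\mathfrak{S}_n\rvert/\lvert\mathfrak{S}_{\alpha^i}\rvert=d_i$ collapses and (2) drops out at the end; this is how the paper organizes it.

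Second, and more substantively, Lemma~\ref{lemma_Du_property} does not do what you claim. It excludes only descents in $\{j\in T: j\ge i\}$; it does \emph{not} exclude $T$-indices strictly below $i$. So correction terms arising from $\mathcal{A}_i$ with $i\in\mathcal{B}_a$ can land in $P_{t_b}$ for $b<a$, and your statement that they are ``confined to descents inside $\mathcal{B}_a$'' is false. The paper fixes this by running an induction on the block index $a$: assuming $\swh{u}\in M$ for all $u\in P_{t_b}$ with $b<a$, the spillover into earlier blocks is absorbed into $M$, and one is left with the in-block analysis you sketched. Within the block, the precise input underlying your ``unitriangularity'' is the singleton computation $\mathcal{A}_j\cap P_{t_a}=\{s_{j+1}\cdots s_{n-1}\wi{t_a}\}$ for $t_a<j<t_{a+1}$ (Lemma~\ref{lemma_main_3}), which yields $\rho_j-\rho_{j-1}\in M$; the remaining relations among the $\rho_k$ with $k\ge t_{a+1}-1$ come from acting further on $\swh{s_{t_{a+1}-1}}$ inside $M_{t_{a+1}-1,h}$. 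Finally, you do not need to verify $\hatswh{t_a}\ne 0$ separately: regardless of whether it vanishes, one has $\hatswh{t_a}\in M$ trivially, and working modulo $M$ gives $\hatswh{t_a}\equiv\text{(nonzero combination of the }\rho_j\text{'s)}$, which is all the spanning argument uses; nonvanishing then follows a posteriori from $\dim M_{t_a,h}=1$.
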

Before providing a proof of Theorem~\ref{thm_H2}, we prepare terminologies and three lemmas. 
We consider the sum 
\[
M \colonequals \sum_{i=1}^{n-1} M_{i,h}
\] 
of the modules which is
contained in $H^2(\Hess(S,h))$  by the construction.
\begin{definition}
For two elements $\tau_1$ and $\tau_2$ in $H^2(\Hess(S,h))$, we say that they are the same \emph{modulo $M$}, denoted by 
\[
\tau_1 \equiv \tau_2 \mod M,
\] 
if their difference $\tau_1 - \tau_2$ is contained in $M$. \end{definition}
Since $M$ is invariant under the $\mathfrak{S}_n$-action, for any $u \in \mathfrak{S}_n$, if $\tau_1 \equiv \tau_2 \mod M$, then $u \cdot \tau_1 \equiv u \cdot \tau_2 \mod M$.
By Proposition~\ref{prop:type and orbit}, we obtain the following lemma.
\begin{lemma}\label{lemma_main_1}
For $i \notin T$ and $u \in P_i$, we have
\begin{equation*}
\swh{u} \in M_{i,h} \subset M.
\end{equation*}
\end{lemma}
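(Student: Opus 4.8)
The plan is to reduce the statement directly to Proposition~\ref{prop:type and orbit}, the only subtlety being the identification of the generator of $M_{i,h}$ in the case $i \notin T$.

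First I would unwind Definition~\ref{def_hat_wi}: when $i \notin T$ the modified class $\hatswh{i}$ is by definition the unmodified BB class $\swh{\wi{i}}$ itself. Consequently $M_{i,h} = \C\mathfrak{S}_n(\hatswh{i})$ is exactly $M(\swh{\wi{i}})$, the $\mathfrak{S}_n$-module generated by $\swh{\wi{i}}$. This is the only place the hypothesis $i \notin T$ is used; for $i \in T$ the generator is instead the fully symmetrized class $\sum_{v}v\cdot\swh{\wi{i}}$, and the argument below would not apply verbatim.

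Next I would record the two facts that put us in the setting of Proposition~\ref{prop:type and orbit}: by Proposition~\ref{prop:wi_elements_in_Gh1} we have $\wi{i}\in\mathcal G_h^1$, and by the defining formula~\eqref{eq_def_Pi} the set $P_i = P_{\wi{i}}$ is precisely the graph-type class $[\wi{i}]_h$. Thus $(\wi{i},P_i)$ has the form $(w,P_{w,h})$ with $w\in\mathcal G_h^1$. Applying Proposition~\ref{prop:type and orbit} with $w=\wi{i}$ and $k=1$ then yields $\swh{u}\in M(\swh{\wi{i}})$ for every $u\in P_i$; combining this with the first step gives $\swh{u}\in M_{i,h}$, and $M_{i,h}\subset M$ is immediate from $M=\sum_{j=1}^{n-1}M_{j,h}$.

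I do not anticipate a real obstacle, since the substance is already contained in Proposition~\ref{prop:type and orbit pre}: there one produces a chain $\wi{i}\stackrel{s_{i_1}}{\dasharrow}\cdots\stackrel{s_{i_r}}{\dasharrow}u$ and invokes Proposition~\ref{prop:simple action}(1) to write $\swh{u}=s_{i_r}\cdots s_{i_1}\cdot\swh{\wi{i}}$, exhibiting $\swh{u}$ explicitly as a $\mathfrak{S}_n$-translate of the generator. The one point deserving care is purely bookkeeping, namely confirming that $\hatswh{i}=\swh{\wi{i}}$ holds exactly under the hypothesis $i\notin T$, so that the module $M_{i,h}$ is genuinely generated by $\swh{\wi{i}}$ and not by its symmetrization.
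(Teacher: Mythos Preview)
Your proposal is correct and follows essentially the same argument as the paper: use Definition~\ref{def_hat_wi} to identify $\hatswh{i}=\swh{\wi{i}}$ when $i\notin T$, then invoke Proposition~\ref{prop:type and orbit} (with $w=\wi{i}$) to conclude $\swh{u}\in M(\swh{\wi{i}})=M_{i,h}$ for all $u\in P_i$. The paper's proof is just a terser version of exactly this.
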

\begin{proof}
As is defined in Definition~\ref{def_hat_wi}, the class $\hats_{i,h}$ is $\swh{\wi{i}}$. On the other hand, by Proposition~\ref{prop:type and orbit}, any class $\swh{u}$ for $u \in P_{\wi{i}} = P_i$ is contained in the $\mathfrak{S}_n$-module $M(\swh{i}) = \C \mathfrak{S}_n(\swh{\wi{i}})$. This proves the lemma.
\end{proof}

\begin{lemma}\label{lemma_main_2}
For $i \in [n-1]$, 
the class $s_i \cdot \swh{s_i}$ can be expressed as follows:
\begin{equation}\label{eq_si_swh_si}
s_i \cdot \swh{s_i}
= \swh{s_i} +
\underbrace{\sum_{\substack{u \in P_{\ell} \\
\ell \notin T}} a_u \swh{u} + \sum_{\substack{u \in P_{\ell} \\ \ell \in T,
\ell < i}} b_u \swh{u}}_{\text{$=: \tau_i$}} \quad \text{ for some } a_u, b_u \in \C.
\end{equation}
\end{lemma}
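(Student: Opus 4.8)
\section*{Proof proposal}

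The plan is to start from the explicit expansion of $s_i \cdot \swh{s_i}$ already recorded in Corollary~\ref{cor_si_action_on_sigma_si} and to show that every basis class occurring on the right-hand side is of the admissible index type. By Corollary~\ref{cor_si_action_on_sigma_si} we have $s_i \cdot \swh{u} = \swh{s_i u}$ for each $u \in \mathcal A_i$, and
\[
s_i \cdot \swh{s_i} - \swh{s_i} = \sum_{u \in \mathcal A_i}\swh{u} - \sum_{u \in \mathcal A_i} \swh{s_i u}.
\]
Thus it suffices, for each $u \in \mathcal A_i$, to locate the blocks $P_\ell$ containing $u$ and $s_i u$ and to verify that the relevant index $\ell$ satisfies either $\ell \notin T$, or $\ell \in T$ and $\ell < i$.

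First I would record that every $u \in \mathcal A_i$ has exactly one descent. Indeed $\ell_h(u)=1$, and since $h(j)\geq j+1$ for all $j$ we have $\des(u)\leq \ell_h(u)=1$; as $u \neq e$ (because $\ell_h(e)=0$) this forces $\des(u)=1$. Writing $\Des(u)=\{\ell\}$, the description~\eqref{eq_description_of_Pi} of the blocks then gives $u \in P_\ell$. Applying Lemma~\ref{lemma_Du_property} to the descent $\ell$ of $u \in \mathcal A_i$ shows $\ell \notin \{ j \in T \mid j \geq i\}$, that is, $\ell \notin T$ or $\ell \in T$ with $\ell < i$, which is exactly the index constraint in the statement.

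Next I would control the second sum. Since $u \dasharrow s_i u$ for $u \in \mathcal A_i$ (as observed in~\eqref{eq_Ai_2}), Lemma~\ref{lemma_graph_type} yields $G_{s_i u, h}=G_{u,h}$, so $s_i u$ lies in the \emph{same} block $P_\ell$ as $u$. Hence both $\swh{u}$ and $\swh{s_i u}$ are of the form $\swh{v}$ with $v \in P_\ell$ for an $\ell$ of the required type. Collecting contributions over $u \in \mathcal A_i$ and grouping coefficients according to whether $\ell \notin T$ or $\ell \in T$ with $\ell<i$ --- which is legitimate because the classes $\{\swh{v}\}_{v \in \mathfrak S_n}$ form a basis by Proposition~\ref{prop_basis} --- produces the asserted expression $\tau_i$ with coefficients $a_u, b_u \in \C$.

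There is essentially no analytic content here; the proof is bookkeeping showing that the two combinatorial inputs align with the index set defining $\tau_i$: Lemma~\ref{lemma_Du_property} pins the descent of any element of $\mathcal A_i$ away from $\{ j \in T \mid j \geq i\}$, while Lemma~\ref{lemma_graph_type} guarantees that the $s_i$-move preserves graph type. The one point requiring care is the second sum, where $s_i u$ need not itself belong to $\mathcal A_i$; its membership in an admissible block $P_\ell$ must therefore be deduced from the graph-type invariance of Lemma~\ref{lemma_graph_type} rather than directly from the defining conditions of $\mathcal A_i$.
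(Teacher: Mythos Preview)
Your proof is correct and follows essentially the same route as the paper: expand $s_i\cdot\swh{s_i}-\swh{s_i}$ via Corollary~\ref{cor_si_action_on_sigma_si}, then use Lemma~\ref{lemma_Du_property} together with~\eqref{eq_description_of_Pi} to restrict the block index~$\ell$, and finally argue that $u$ and $s_iu$ lie in the same block $P_\ell$. The only cosmetic difference is that the paper infers $\Des(s_iu)=\Des(u)$ directly (citing Corollary~\ref{cor_si_action_on_sigma_si}) to place $s_iu$ in $P_\ell$, whereas you invoke Lemma~\ref{lemma_graph_type} and the graph-type definition~\eqref{eq_def_Pi} of $P_\ell$; both amount to the same observation about the relation $u\dasharrow s_iu$.
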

\begin{proof}
By~\eqref{eq_si_action_modified}, we have
\begin{equation}\label{eq_lemma_main_2_1}
s_i \cdot \swh{s_i}
= \swh{s_i} + \sum_{v \in \mathcal A_i} \swh{v} - \sum_{v \in \mathcal A_i} \swh{s_iv}.
\end{equation}
The classes determined by $\mathcal A_i$ and $s_i \mathcal A_i$ are all of degree $1$, that is,
\[
\mathcal A_i \sqcup s_i \mathcal A_i   \subset P_1 \sqcup P_2 \sqcup \cdots \sqcup P_{n-1}.
\]
Moreover, by Corollary~\ref{cor_si_action_on_sigma_si}, for any $v \in \mathcal{A}_i$, we have $\Des(v) = \Des(s_iv)$ and hence $v$ and $s_iv$ are in the same $P_{\ell}$.
Accordingly, the second and third sum in the equation~\eqref{eq_lemma_main_2_1} can be expressed as follows:
\[
\begin{split}
\sum_{v \in \mathcal A_i} \swh{v} - \sum_{v \in \mathcal A_i} \swh{s_iv}
&= \sum_{v \in \mathcal A_i} (\swh{v} - \swh{s_iv})\\
&= \sum_{\ell=1}^{n-1} \sum_{v \in \mathcal A_i \cap P_{\ell}}
(\swh{v} - \swh{s_iv}).
\end{split}
\]
By applying Lemma~\ref{lemma_Du_property} and~\eqref{eq_description_of_Pi}, we get $\mathcal A_i \cap P_{\ell} = \emptyset$ if $\ell \in T$ and $\ell \geq i$, so the result follows.
\end{proof}
We let $\tau_i$  be the sum of the second and third terms in~\eqref{eq_si_swh_si} for later use.
To provide the last lemma, we prepare terminologies. 
Suppose that $i \in T = \{t_1 < t_2 < \dots < t_s\}$ and $i+1 \notin T \cup \{n\}$.  
In this case, $P_i = \{  s_{j+1} s_{j+2}\cdots s_{n-1} \wi{i} \mid  i \leq j < n\}$ by
Proposition~\ref{prop_elements_in_Pi}(1).
Here, we notice that the one-line notation of an element in $P_i$ is given as follows:
\begin{equation}\label{eq_rho_j_per}
s_{j+1} s_{j+2} \cdots s_{n-1}  \wi{i} = 1\,2\,\cdots i-1\,\, j+1 \,\vred \, i\,\, i+1\,\cdots \, j \, j+2 \, \cdots \, n \quad \text{ for } i\leq j < n.
\end{equation}
We denote by
\begin{equation}\label{eq_def_rho_j}
\rho_j \colonequals  \swh{ s_{j+1} s_{j+2} \cdots s_{n-1} \wi{i}} \quad \text{ for } i \leq j < n.
\end{equation}
By the proof of Proposition~\ref{prop_elements_in_Pi}, the classes $\rho_j$ satisfy the relations
\begin{equation}\label{eq_rhoj_relation}
\begin{split}
&s_j \cdot \rho_j = \rho_{j-1} \quad \text{for $i < j < n$}, \\ 
& s_k \cdot  \rho_j = \rho_j \quad \text{ for }i \leq j < n \text{ and }k \neq j, j+1.
\end{split}
\end{equation}

\begin{lemma}\label{lemma_main_3}
Suppose that $i \in T$ and $i+1 \notin T \cup \{n\}$.
For $j$ satisfying $i = t_a < j < t_{a+1}$, 
we get
\begin{equation}\label{eq_rhoj_rhoj-1}
s_j \cdot \swh{s_j} 
= \swh{s_j} +\sum_{\substack{v \in P_{\ell} \\ \ell \notin T}} a'_v \swh{v} + (\rho_j - \rho_{j-1})
+ \sum_{\substack{v \in P_{\ell} \\
\ell \in T, \ell < i}} b'_v \swh{v} \quad \text{ for some }a_v', b_v' \in \C.
\end{equation}
\end{lemma}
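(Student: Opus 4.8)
The plan is to obtain the identity directly from Lemma~\ref{lemma_main_2} and then single out the part of the correction term supported on the block $P_i$. Applying Lemma~\ref{lemma_main_2} with the index $j$ in place of $i$ (legitimate since $1 \le j \le n-1$, as $i < j < t_{a+1} \le n$) and recalling from~\eqref{eq_si_action_modified} the unpacked shape of the correction term, we may write
\[
s_j \cdot \swh{s_j} = \swh{s_j} + \tau_j, \qquad \tau_j = \sum_{v \in \mathcal{A}_j} \left( \swh{v} - \swh{s_j v} \right).
\]
By Corollary~\ref{cor_si_action_on_sigma_si} we have $\Des(v) = \Des(s_j v)$ for $v \in \mathcal{A}_j$, so $v$ and $s_j v$ lie in a common block $P_\ell$; and by Lemma~\ref{lemma_Du_property} every such $\ell$ satisfies $\ell \notin T$ or $\ell \in T$ with $\ell < j$. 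Since $i = t_a < j < t_{a+1}$ gives $\{\ell \in T \mid \ell < j\} = \{\ell \in T \mid \ell < i\} \cup \{i\}$, the $T$-part of $\tau_j$ breaks into the terms with $\ell < i$, which I will collect into the final sum of~\eqref{eq_rhoj_rhoj-1}, and the terms supported on $P_i$. The statement thus reduces to showing that the $P_i$-contribution $\sum_{v \in \mathcal{A}_j \cap P_i} \left( \swh{v} - \swh{s_j v} \right)$ equals $\rho_j - \rho_{j-1}$.

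To evaluate this I would compute $\mathcal{A}_j \cap P_i$. By Proposition~\ref{prop_elements_in_Pi}(1), $P_i = \{ v_{j'} \mid i \le j' < n\}$ with $v_{j'} \colonequals s_{j'+1} \cdots s_{n-1} \wi{i}$, whose one-line notation is recorded in~\eqref{eq_rho_j_per}; every $v_{j'}$ has $\ell_h(v_{j'}) = 1$. Since $i \in T$ and $i+1 \notin T$, the definition of $T$ forces $h(i) = i+1$. Now I would test the conditions of Proposition~\ref{proposition_Ai} describing $\mathcal{A}_j$. The key observation is that $v_{j'}^{-1}(j+1) \le j$ already forces $j' = j$: the value $j+1$ occupies the first (increasing) block of $v_{j'}$, at position $i \le j$, precisely when $j'+1 = j+1$, and for every other $j'$ it lies in the second increasing block at a position exceeding $j$. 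For $j' = j$ one then checks $v_j^{-1}(j) = j+1 > j$ and $h(v_j^{-1}(j+1)) = h(i) = i+1 < j+1$, the last inequality holding because $i < j$. Hence $\mathcal{A}_j \cap P_i = \{v_j\}$.

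To finish, note that by~\eqref{eq_def_rho_j} the permutation $v_j = s_{j+1} \cdots s_{n-1} \wi{i}$ represents $\rho_j$, while $s_j v_j = s_j s_{j+1} \cdots s_{n-1} \wi{i}$ represents $\rho_{j-1}$; thus $\swh{v_j} = \rho_j$ and $\swh{s_j v_j} = \rho_{j-1}$. Consequently the $P_i$-contribution equals $\rho_j - \rho_{j-1}$, and absorbing the remaining summands of $\tau_j$ — those in blocks $P_\ell$ with $\ell \notin T$ into the $a'_v$-sum, and those with $\ell \in T$ and $\ell < i$ into the $b'_v$-sum — produces exactly~\eqref{eq_rhoj_rhoj-1}.

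The main obstacle is the middle step, namely proving $\mathcal{A}_j \cap P_i = \{v_j\}$: this is the only genuine computation and the sole place where the hypotheses $i \in T$, $i+1 \notin T$ (used through $h(i) = i+1$) and the range $i < j < t_{a+1}$ enter. Once this single-element identification is established, the rest is formal bookkeeping layered on top of Lemma~\ref{lemma_main_2}.
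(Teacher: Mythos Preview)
Your proof is correct and follows essentially the same route as the paper's: apply Lemma~\ref{lemma_main_2} at the index $j$, split the $T$-part using $\{\ell \in T \mid \ell < j\} = \{\ell \in T \mid \ell < i\} \cup \{i\}$, and identify the $P_i$-contribution by showing $\mathcal A_j \cap P_i = \{s_{j+1}\cdots s_{n-1}\wi{i}\}$. The paper simply cites Proposition~\ref{proposition_Ai} and~\eqref{eq_rho_j_per} for this last identification, whereas you spell out the verification in full (including the observation $h(i)=i+1$), but the argument is the same.
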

\begin{proof}
Using the formula~\eqref{eq_si_swh_si} in Lemma~\ref{lemma_main_2}, we have
\begin{equation*}
\begin{split}
s_j \cdot \swh{s_j} &= \swh{s_j}
+\sum_{\substack{v \in P_{\ell} \\ \ell \notin T}} a'_v \swh{v} + \sum_{\substack{v \in P_{\ell} \\
\ell \in T, \ell < j}} b'_v \swh{v}\\
&= \swh{s_j}
+\sum_{\substack{v \in P_{\ell} \\ \ell \notin T}} a'_v \swh{v}
+ \sum_{v \in P_i
} b'_v \swh{v} + \sum_{\substack{v \in P_{\ell} \\
\ell \in T, \ell < i}} b'_v \swh{v}\\
&= \swh{s_j} +\sum_{\substack{v \in P_{\ell} \\ \ell \notin T}} a'_v \swh{v} + (\rho_j - \rho_{j-1})
+ \sum_{\substack{v \in P_{\ell} \\
\ell \in T, \ell < i}} b'_v \swh{v}\\
\end{split}
\end{equation*}
for some $a_v',b_v' \in \C$.
Here, the third equality comes from the following:  For $i = t_a < j < t_{a+1}$,
we have 
\begin{equation}\label{eq_Aj_cap_Pi}
\mathcal A_j \cap P_i = \{s_{j+1} s_{j+2} \cdots s_{n-1} \wi{i}\}
\end{equation}
because of Proposition~\ref{proposition_Ai},~\eqref{eq_rho_j_per}, and moreover, $s_j \cdot \rho_j = \rho_{j-1}$ (see~\eqref{eq_rhoj_relation}). This proves the lemma.
\end{proof}

\begin{proof}[Proof of Theorem~\ref{thm_H2}]
Let $M = \sum_{i=1}^{n-1} M_{i,h}$ be the sum of the modules which is
contained in $H^2(\Hess(S,h))$  by the construction.
We first notice that
for each $1 \le i \le n-1$, we have
\[
\dim_{\C} M_{i,h} \leq \frac{|\mathfrak{S}_n|}{|\Stab(\hats_{i,h})|} \le  \dim_{\C}
M^{\alpha^i}
\]
by the orbit-stabilizer theorem and Proposition~\ref{prop:stabilizer_34}.
Hence, we obtain
\[
\dim_{\C} M \le \sum_{i=1}^{n-1} \dim_{\C} M_{i,h}
\le \sum_{i=1}^{n-1} \dim_{\C} M^{\alpha^i} = \dim_{\C} H^2(\Hess(S,h)).
\]
Here, the last equality comes from Proposition~\ref{prop_dimension}.
Accordingly, if we prove $M = H^2(\Hess(S,h))$, then we get
\begin{enumerate}
\item $M_{i,h} \cong M^{\alpha^i}$;
\item $\Stab(\hats_{i,h}) = \mathfrak{S}_{\alpha^i}$; and\label{stab}
\item $M$ is a direct sum of $M_{i,h}$, that is, $M = \bigoplus_{i=1}^{n-1} M_{i,h} = H^{2}(\Hess(S,h))$,
\end{enumerate}
proving the theorem.

Note that
	\[
	H^2(\Hess(S,h)) = \span_{\C} \{ \swh{u} \mid \ell_h(u) = 1\} =
	\span_{\C}\{ \swh{u} \mid u \in P_1 \sqcup P_2 \sqcup \cdots \sqcup
	P_{n-1}\}.
	\]

\smallskip
\noindent \textsf{\underline{Claim}:} For any $i \in [n-1]$ and $u \in P_i$, we have $\swh{u} \in M$.
\smallskip

If the claim holds, then we get the
	desired result $H^2(\Hess(S,h)) \subset M$.
We provide a proof using case-by-case analysis.

\smallskip
\noindent \noindent \textsf{\textbf{Case 1.}}
	First of all, if $i \notin T$, then
\begin{equation*}
\swh{u} \in M_{i,h} \subset M \quad \text{ for }u \in P_i
\end{equation*}
by Lemma~\ref{lemma_main_1}.
This proves the claim for $i \notin T$.

We note that since $\swh{s_i} \in M_{i,h}$, we have
\begin{equation}\label{eq_si_swh_si_in_M}
s_i \cdot \swh{s_i} \in M_{i,h} \subset M \quad \text{ for }i\notin T.
\end{equation}

\smallskip
\noindent \noindent \textsf{\textbf{Case 2.}}
We now consider indices $i \in T$.
	We denote by $T = \{t_1 < t_2 < \dots < t_s\}$. Here, $s = \lvert T \rvert$. 
We will prove the claim using an induction argument on $1 \leq a \leq s$. 

We first consider the case when $a = 1$, that is, $ i = t_1 = 1$ and consider two cases separately: 
\[
2 \in T \cup \{n\};\text{ or }\quad 2 \notin T \cup \{n\}. 
\]

\smallskip
\noindent \textsf{\textbf{Case 2-1.}} Suppose that $i = 1 \in T$ and $i+1 \in T \cup \{n\}$.
In this case, $P_i = \{ \wi{i} \}$, and moreover, $\wi{i} = s_i$ by
Proposition~\ref{prop_elements_in_Pi}(1).
The class $\hats_{i,h}$ is defined by
\[
\hats_{i,h} = \sum_{v \in \mathfrak{S}_n} v \cdot \swh{s_i} \in M.
\]
By Proposition~\ref{prop:stabilizer_12}, the stabilizer subgroup of $\swh{s_i}$
is $\mathfrak{S}_n$; or it is a proper subgroup containing $\mathfrak{S}_{(i,n-i)}$. If the stabilizer
subgroup is $\mathfrak{S}_n$, then $\hats_{i,h} = n! \swh{s_i} \in M$, proving that the class $\swh{s_i}$, the only element in $P_i$, is
contained in $M$.

On the other hand, suppose that the stabilizer subgroup of $\swh{s_i}$ is not
the whole group~$\mathfrak{S}_n$ but contains $\mathfrak{S}_{(i,n-i)}$.
In this case, the third term on the right-hand side of~\eqref{eq_si_swh_si} does not exist. Moreover, because of Lemma~\ref{lemma_main_1}, the second term on the right-hand side of~\eqref{eq_si_swh_si} is contained in $M$, so $\tau_i \in M$. This proves
\begin{equation}\label{eq_case2-1_modulo}
s_i\cdot  \swh{s_i} \equiv \swh{s_i} \mod M.
\end{equation}
Moreover, because the stabilizer group of $\swh{s_i}$ contains $\mathfrak{S}_{(i,n-i)}$, we have
\begin{equation}\label{eq_case2-1_modulo2}
s_j \cdot \swh{s_i} = \swh{s_i} \quad \text{ for } j \neq i.
\end{equation}
Considering equations~\eqref{eq_case2-1_modulo} and~\eqref{eq_case2-1_modulo2}, for any $v \in \mathfrak{S}_n$, we have $v \cdot \swh{s_i} \equiv \swh{s_i} \mod M$. Accordingly, we obtain
\[
\hats_{i,h} =
\sum_{v \in \mathfrak{S}_n} v \cdot  \swh{s_i}
\equiv \sum_{v \in \mathfrak{S}_n} \swh{s_i} = n! \swh{s_i} \mod M.
\]
Since $\hats_{i,h} \in M$, the class $\swh{s_i}$, the only element in $P_i$, is
contained in $M$.
This proves the claim when $i=1 \in T$ and $i+1 \in T \cup \{n\}$.

\smallskip
\noindent \textsf{\textbf{Case 2-2.}} Suppose that $i = 1 \in T$ and $i+1 \notin T \cup \{n\}$.
By Proposition~\ref{prop:stabilizer_12}(2), we have
\[
\begin{split}
\hatswh{i} &= \sum_{v \in \mathfrak{S}_n} v \cdot \swh{\wi{i}} \\
&= (n-1)!(\swh{\wi{i}} + s_{n-1} \cdot \swh{\wi{i}} + (s_{n-2}s_{n-1})\cdot \swh{\wi{i}} + \cdots + (s_1 \cdots s_{n-1})\cdot \swh{\wi{i}}) \\
&= (n-1)!(\rho_{n-1}+ \rho_{n-2} + \cdots + \rho_{i+1}+\rho_i
+ s_i \cdot \rho_i + (s_{i-1}s_i) \cdot \rho_i  + \cdots + (s_1 \cdots s_i) \cdot \rho_i) \in M.
\end{split}
\]
Here, $\rho_j$'s are the classes defined in~\eqref{eq_def_rho_j}, and the last equality comes from~\eqref{eq_rhoj_relation}. Accordingly, since $\hatswh{i} \in M$, the following class $\hatswh{i}'$ is contained in~$M$.
\begin{equation}\label{eq_hatswhi_in_M}
\hatswh{i}' \colonequals \rho_{n-1} + \rho_{n-2} + \cdots + \rho_{i+1}+\rho_i
+ s_i \cdot \rho_i + (s_{i-1}s_i) \cdot \rho_i  + \cdots + (s_1 \cdots s_i) \cdot \rho_i \in M.
\end{equation}
Since $i = 1$, the third term on the right-hand side of~\eqref{eq_si_swh_si} does not exist. Moreover, because of Lemma~\ref{lemma_main_1}, the second term on the right-hand side of~\eqref{eq_si_swh_si} is contained in $M$, so $\tau_i \in M$. This provides the modular equality~\eqref{eq_case2-1_modulo}, that is, $s_i \cdot \swh{s_i} \equiv \swh{s_i} \mod M$, 
and
\begin{equation*}
\begin{split}
(s_k s_{k+1} \cdots s_{i-1} s_i) \cdot \rho_i
&= (s_k s_{k+1} \cdots s_{i-1} s_i) \cdot \swh{s_i} \\
&\equiv (s_k s_{k+1} \cdots s_{i-1})\cdot \swh{s_i} \mod M
\end{split}
\end{equation*}
for $1 \le k \le i-1$.
Accordingly, we obtain
\begin{equation}\label{eq_trivial_class_hatswhi}
\hatswh{i}'  \equiv  \rho_{n-1} +  \rho_{n-2} + \cdots +
 \rho_{i+1}+ (i+1) \rho_i \mod M,
\end{equation}
and moreover, by~\eqref{eq_hatswhi_in_M}, we get
\begin{equation}\label{eq_rho_sum_in_M}
\rho_{n-1} +  \rho_{n-2} + \cdots +
 \rho_{i+1}+ (i+1) \rho_i \in M.
\end{equation}

On the other hand, for $1=t_1 < j < t_2 -1$, by Lemma~\ref{lemma_main_3}, we get
\begin{equation}\label{eq_sj_sigma_sj_and_rho}
\begin{split}
s_j \cdot \swh{s_j} 
&= \swh{s_j} +\sum_{\substack{v \in P_{\ell} \\ \ell \notin T}} a'_v \swh{v} + (\rho_j - \rho_{j-1})
+ \sum_{\substack{v \in P_{\ell} \\
\ell \in T, \ell < i}} b'_v \swh{v} \quad \text{ for some }a_v', b_v' \in \C \\
&\equiv \rho_j - \rho_{j-1} \mod M.
\end{split}
\end{equation}
Here, the modular equality holds because the fourth term does not exist (since $i = 1$) and the second term and $\swh{s_j}$ are contained in $M$ because of Lemma~\ref{lemma_main_1}.
Since $j \notin T$ and $s_j \cdot \swh{s_j} \in M$ as seen in~\eqref{eq_si_swh_si_in_M}, we obtain
\begin{equation}\label{eq_trivial_case_2}
\rho_j - \rho_{j-1} \in M \quad \text{ for }t_a < j < t_{a+1}-1.
\end{equation}

For the index $j = t_{a+1}-1$, the module $M_{j,h}$ is spanned by
\[
\underbrace{\{ \swh{u} \mid u \in P_j \}}_{\text{$n-j$ elements}} \sqcup
\underbrace{\{ s_{j} \cdot \swh{s_{j}}, (s_{j-1} s_{j})\cdot
\swh{s_{j}},\dots,(s_{1} \cdots s_{j})\cdot \swh{s_{j}}\}}_{\text{$j$
elements}}.
\]
A similar computation in~\eqref{eq_sj_sigma_sj_and_rho} leads to
\[
s_{j} \cdot \swh{s_{j}}
\equiv \rho_{j} - \rho_{j-1} \mod M,
\]
and, by~\eqref{eq_rhoj_relation}, we have the following.
\[
\begin{split}
(s_k s_{k+1} \cdots s_{j+2} s_{j+1} s_{j}) \cdot \swh{s_{j}}  &
= (s_k s_{k+1} \cdots s_{j+2}s_{j+1})\cdot( s_j \cdot \swh{s_j}) \\
&\equiv (s_k s_{k+1} \cdots s_{j+2} s_{j+1}) \cdot (\rho_j - \rho_{j-1}) \mod M \\
&\equiv (s_k s_{k+1} \cdots s_{j+2}) \cdot (\rho_{j+1} - \rho_{j-1}) \mod M \\
& \equiv \rho_{k}- \rho_{j-1} \mod M
\end{split}
\]
for $j \le k \le n-1$.
Accordingly, we obtain
\begin{equation}\label{eq_trivial_case_3}
\rho_{k} - \rho_{t_{a+1}-2} \in M \quad \text{ for } t_{a+1}-1 \le k \le n-1.
\end{equation}
Combining~\eqref{eq_rho_sum_in_M}, ~\eqref{eq_trivial_case_2}, and~\eqref{eq_trivial_case_3}, we obtain $\rho_j \in
M$ for $i \le j <n$, which proves the desired claim when $i = 1 \in T$ and $i+1 \notin T \cup \{n\}$.

\medskip 
\textsf{Case 2-1} and \textsf{Case 2-2} cover the initial case $a = 1$  (that is, $i = t_1 = 1\in T$)  of the induction argument. Now we suppose that $1 \neq i = t_a \in T$, and moreover,   the claim holds for the smaller indices than $a$. Under this setting, we prove the claim for such $a$, i.e., when $i = t_a \neq 1$.

For such an index $i= t_a \in T$, 
because of Lemma~\ref{lemma_main_1} and
the induction hypothesis, the second and third terms on the right-hand side of~\eqref{eq_si_swh_si} are
already contained in $M$. This provides $\tau_i \in M$ and the modular equality~\eqref{eq_case2-1_modulo}.

If $i+1 \in T \cup \{n\}$, then by the same argument we used in \textsf{Case~2-1}, the claim holds. Otherwise, by applying  the same argument we used in \textsf{Case~2-2}, the claim holds.

This completes the proof of the theorem.	
\end{proof}
For $i \notin T$, we have $\hatswh{i} = \swh{\wi{i}}$.
Accordingly, for this case, we can restate Theorem~\ref{thm_H2}\eqref{stab} as follows.
\begin{corollary}\label{cor_stab_nn}
Let $h \colon [n] \to [n]$ be a Hessenberg function. Suppose that $i \notin T = T_h$ and $i+1 \notin T \cup\{n\}$. Then, $\Stab(\swh{\wi{i}}) = \mathfrak{S}_{(n-i,i)}$.
\end{corollary}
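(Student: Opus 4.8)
The plan is to deduce the corollary from Theorem~\ref{thm_H2} together with the inclusion already recorded in Proposition~\ref{prop:stabilizer_34}, by a simple order count. The point is that, in the regime $i \notin T$ and $i+1 \notin T \cup \{n\}$, Proposition~\ref{prop:stabilizer_34}(2) yields the literal equality $\Stab(\swh{\wi{i}}) = \mathfrak{S}_{(n-i,i)}$ only when $n-i \neq i$; in the degenerate case $n = 2i$ it leaves merely the inclusion $\mathfrak{S}_{(i,i)} \leq \Stab(\swh{\wi{i}}) < \mathfrak{S}_n$. The content of the corollary is to remove this ambiguity now that the full module structure of $H^2(\Hess(S,h))$ is available.

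First I would record that, since $i \notin T$, the modified class coincides with the original one: $\hatswh{i} = \swh{\wi{i}}$ by Definition~\ref{def_hat_wi}. Hence $\Stab(\swh{\wi{i}}) = \Stab(\hatswh{i})$ and $M_{i,h} = \C\mathfrak{S}_n(\swh{\wi{i}})$. Next, by Theorem~\ref{thm_H2}(1) we have $M_{i,h} \cong M^{\alpha^i}$ with $\alpha^i = (i,n-i)$, so that $\dim_{\C} M_{i,h} = \binom{n}{i}$. On the other hand $M_{i,h}$ is, by definition, spanned by the $\mathfrak{S}_n$-orbit of $\swh{\wi{i}}$, whose cardinality is $[\mathfrak{S}_n : \Stab(\swh{\wi{i}})]$; thus $\dim_{\C} M_{i,h} \leq [\mathfrak{S}_n : \Stab(\swh{\wi{i}})]$. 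Comparing the two gives $[\mathfrak{S}_n : \Stab(\swh{\wi{i}})] \geq \binom{n}{i}$, that is, $\lvert \Stab(\swh{\wi{i}}) \rvert \leq i!\,(n-i)!$.

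Finally I would combine this with the reverse containment. Proposition~\ref{prop:stabilizer_34}(2), in both of its subcases, gives $\mathfrak{S}_{(n-i,i)} \leq \Stab(\swh{\wi{i}})$, whence $\lvert \Stab(\swh{\wi{i}}) \rvert \geq (n-i)!\,i!$. The two bounds force $\lvert \Stab(\swh{\wi{i}}) \rvert = i!\,(n-i)! = \lvert \mathfrak{S}_{(n-i,i)} \rvert$, and a finite group containing a subgroup of equal order must equal that subgroup; hence $\Stab(\swh{\wi{i}}) = \mathfrak{S}_{(n-i,i)}$. The only genuinely new information over Proposition~\ref{prop:stabilizer_34} is the case $n = 2i$, and this is where I expect the lone subtlety to lie; but it is not really an obstacle once Theorem~\ref{thm_H2} is in place, since the order estimate coming from the established isomorphism $M_{i,h} \cong M^{(i,i)}$ precisely cancels the freedom left open in Proposition~\ref{prop:stabilizer_34}(2)(b).
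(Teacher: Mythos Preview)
Your proposal is correct and takes essentially the same approach as the paper. The paper simply observes that for $i \notin T$ one has $\hatswh{i} = \swh{\wi{i}}$ and then invokes Theorem~\ref{thm_H2}(2) directly; you cite Theorem~\ref{thm_H2}(1) instead and unpack the order-counting argument (orbit--stabilizer plus the inclusion from Proposition~\ref{prop:stabilizer_34}) that already appears inside the proof of Theorem~\ref{thm_H2} to deduce statement~(2) from statement~(1), so the two arguments are the same in substance.
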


\begin{remark}\label{rmk_question}
By Corollary~\ref{cor_stab_nn}, also by Propositions~\ref{prop:stabilizer_12} and~\ref{prop:stabilizer_34}, we explicitly compute the stabilizer $\Stab(\swh{\wi{i}})$ except one case $i \in T$, $i+1 \in T\cup \{n\}$, $2i = n$, and $T \subsetneq [n-1]$.
Moreover, from Definition~\ref{def_hat_wi}, the class $\hats_{i,h} = \swh{\wi{i}}$ if $i \notin T$. Therefore, by Theorem~\ref{thm_H2}(1), the module $M(\swh{\wi{i}}) = M_{i,h}$ becomes a permutation module. This provides an affirmative answer to Question~\ref{question} when $w \in \mathcal{G}_h^1$.
\end{remark}

\begin{example}
Let $h = (2,3,5,6,6,6)$. Following the notation in Theorem~\ref{thm:Chow}, we
obtain
\[
\alpha^1 = (6), \quad \alpha^2 = (2,4), \quad \alpha^3 = (1,5), \quad \alpha^4
= (6), \quad \alpha^5 = (6).
\]
Since $T = \{1,4,5\}$, we have $\hats_{i,h} = \swh{\wi{i}}$ for $i = 2$ or $3$, and
\[
M_{i,h} = \span_{\C}(\{ \swh{u} \mid u \in P_i\} \cup \{ s_i \cdot \swh{s_i}\}).
\]
Accordingly, $\swh{u} \in M = \sum_{i=1}^5 M_{i,h}$ for any $u \in P_2 \sqcup
P_3$ and $s_i \cdot \swh{s_i} \in M$ for $i =2,3$.

We demonstrate that for $u \in P_1$, the class $\swh{u}$  is contained in $M$ in this example.
The element $\hats_{1,h}$ becomes
\[
\hats_{1,h} = \swh{612345} + \swh{512346}+ \swh{412356} +\swh{312456}
+\swh{213456} + s_1 \cdot \swh{s_1}.
\]
Moreover, using Example~\ref{example_235666_Ai} and Corollary~\ref{cor_si_action_on_sigma_si}, we obtain
\[
\begin{split}
s_1 \cdot \swh{s_1} &= \swh{s_1}
\quad \boxed{
\begin{aligned}[t]
&+ \swh{23\vred1456} + \swh{24\vred1356} + \swh{25\vred1346}
+ \swh{26\vred1345} + \swh{234\vred156}\\
&\quad- (\swh{13\vred2456} + \swh{14\vred2356} + \swh{15\vred2346}
+ \swh{16\vred2345} + \swh{134\vred256} )
\end{aligned}
}\\
&\equiv \swh{s_1} \mod M.
\end{split}
\]
Here, we draw a box for presenting elements in $M_{2,h} \sqcup M_{3,h}
\subset M$.
Hence, we get
\begin{equation}\label{eq_example_main_thm_1}
M \ni \hats_{1,h}  \equiv \swh{612345} + \swh{512346}+ \swh{412356} +\swh{312456}
+ 2 \swh{213456} \mod M.
\end{equation}
On the other hand, we get
\begin{equation}\label{eq_example_main_thm_2}
M \ni s_2 \cdot \swh{s_2} \equiv \swh{3\vred12456} - \swh{2 \vred 13456} \mod M,
\end{equation}
and
\begin{equation}\label{eq_example_main_thm_3}
\begin{split}
M \ni s_3 \cdot \swh{s_3}
&\equiv \swh{4\vred12356} - \swh{3\vred12456} \mod M, \\
M \ni (s_4s_3)\cdot \swh{s_3}
&\equiv \swh{5\vred12346} - \swh{3\vred12456} \mod M, \\
M \ni (s_5s_4s_3)\cdot \swh{s_3}
&\equiv \swh{6\vred12345} - \swh{3\vred12456} \mod M.
\end{split}
\end{equation}
Accordingly, by ~equations~\eqref{eq_example_main_thm_2} and~\eqref{eq_example_main_thm_3}, we
obtain
\[
\begin{tikzcd}[column sep = 0cm, row sep = 0cm]
 & & & \swh{3\vred12456} & - \swh{2 \vred 13456} & \in M, \\
 & & \swh{4\vred12356} & -\swh{3\vred12456} & & \in M,\\
 & \swh{5\vred12346} & & - \swh{3\vred12456} & & \in M, \\
\swh{6\vred12345} & & & - \swh{3\vred12456} & &  \in M.
\end{tikzcd}
\]
Because of~\eqref{eq_example_main_thm_1}, for any $u \in P_1$, we have $\swh{u}
\in M$.
\end{example}

\section{Concluding remarks}

Let $h\colon [n] \rightarrow [n]$ be a Hessenberg function.
Recall that we assign an acyclic orientation~$o_h(w)$ of $G_h$ to each $w \in \mathcal G_h$ in Definition  \ref{def_graph_Ghw}. In fact, the orientation $o_h(w)$ we assign to~$w \in \mathfrak S_n$ is opposite to the usual one. More precisely, to be compatible with the previous work in the case when $\Hess(S,h)$ is a permutohedral variety or when the cohomology has degree two, we will consider the source set instead of the sink set.
\begin{lemma}
For $w \in \mathcal G_{h}^k$, write the source set of $o_h(w)$ as $\{\s_1, \dots,\s_{\ell +1 }\}$, where $\s_1> \dots> \s_{\ell +1 } $. Then $w(\s_1) < \dots <w(\s_{\ell+1})$.
\end{lemma}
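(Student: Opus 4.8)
The plan is to identify $w$ with the permutation produced by the greedy ``maximal source'' algorithm in the proof of Proposition~\ref{prop:type and orbit pre}, and then to track the order in which the sources of $o \colonequals o_h(w)$ are deleted. Recall first that, under the orientation convention of Definition~\ref{def_graph_Ghw}, an edge $\{j,i\}$ of $G_h$ with $j<i$ is oriented from the endpoint of smaller $w$-value to the one of larger $w$-value; hence a vertex is a source of $o$ precisely when its $w$-value is smaller than that of each of its neighbors in $G_h$.

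First I would recall that the assignment $w \mapsto o_h(w)$ is a bijection $\mathcal G_h \to \mathcal O_h$: the map is surjective by the greedy construction in the proof of Proposition~\ref{prop:type and orbit pre} (with the subsequent Remark ensuring the output lies in $\mathcal G_h$), and it is a bijection after comparing cardinalities via Proposition~\ref{prop:generator and acyclic orientation}. Consequently the given $w \in \mathcal G_h^k$ must coincide with the permutation associated to $o$ by that algorithm. Concretely this means $w(v)$ equals the step at which $v$ is selected as the maximal source and deleted: at step $\ell$ one deletes the source of the current graph $\Gamma_\ell$ having the largest vertex label and assigns it the value $\ell$.

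The key observation is that a source of $o$ remains a source of every $\Gamma_\ell$ that still contains it, since the graphs $\Gamma_\ell$ are obtained from $G_h$ only by deleting vertices and their incident edges, so no new incoming edge is ever created. Now let $a<b$ be any two sources of $o$ and consider the first step $\ell_0$ at which one of $a,b$ is deleted. At that moment both $a$ and $b$ are still present, hence both are sources of $\Gamma_{\ell_0}$, so the maximal source of $\Gamma_{\ell_0}$ has label at least $b$. Since the vertex deleted at step $\ell_0$ lies in $\{a,b\}$ and $a<b$, it must be $b$; therefore $w(b)=\ell_0 < w(a)$. Applying this to consecutive entries of the list $\mathsf{s}_1 > \cdots > \mathsf{s}_{\ell+1}$ gives $w(\mathsf{s}_1) < \cdots < w(\mathsf{s}_{\ell+1})$, as claimed.

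The only delicate point is the identification of $w$ with the greedy output, which rests on the bijectivity already established in the excerpt; once this is in hand, the persistence of sources under vertex deletion is immediate and the ``first deletion'' comparison finishes the argument. I would take care to check the edge-orientation convention once more, so that sources are genuinely the local minima of $w$ and the maximal-source rule is being read in the correct (largest-label) direction; a small example such as $h=(2,4,4,4)$, $w=3412$ (with source set $\{3,1\}$ and $w(3)=1<w(1)=3$) serves as a reassuring check.
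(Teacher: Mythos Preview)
Your proof is correct and takes a genuinely different route from the paper's. The paper argues directly from the defining condition of $\mathcal G_h$: given sources $\s$ and $\s'$ with $w(\s)<w(\s')$, it walks along the chain $j_x=w^{-1}(w(\s)+x-1)$ for $x=1,\dots,r$, where $j_1=\s$ and $j_r=\s'$. The condition $w^{-1}(w(j)+1)\le h(j)$ forces $j_x\to j_{x+1}$ to be an edge of $G_{w,h}$ whenever $j_{x+1}>j_x$, and then the unit-interval property of $G_h$ together with the fact that $\s'$ is a source yields, by downward induction, that every $j_x$ with $x<r$ lies strictly to the right of $\s'$; in particular $\s=j_1>\s'$.

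Your argument instead leans on the bijectivity $\mathcal G_h\stackrel{\sim}{\to}\mathcal O_h$ established around Proposition~\ref{prop:type and orbit pre} and its subsequent Remark, so that $w$ is exactly the output of the ``maximal source'' deletion algorithm applied to $o_h(w)$; the persistence of sources under vertex deletion then finishes things off in one line. This is cleaner once the bijection is in hand and has the virtue of not re-using the interval-graph structure of $G_h$ at all. The paper's approach, by contrast, is self-contained at the level of the definitions (it does not invoke the earlier enumerative equality $|\mathcal G_h|=|\mathcal O_h|$), which may be why the authors chose it for this late-stage lemma. Either argument is acceptable; just be explicit in yours that the identification ``$w$ equals the greedy output'' genuinely requires both the Remark (output lies in $\mathcal G_h$) and injectivity of $w\mapsto o_h(w)$ on $\mathcal G_h$, which in the paper is deduced from the cardinality comparison.
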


\begin{proof}
Let $w \in \mathcal G_h^k$.  Suppose that $\s$ and $\s'$ are sources of $o_h(w)$ with   $w(\s)<w(\s')$.
We define a sequence $j_1,\dots,j_{r}$ of indices by
\[
j_x = w^{-1}(w(\s)+x-1) \quad \text{ for }1 \le x \le w(\s')-w(\s) +1 =: r.
\]
In fact, we obtain $w(j_1) = w(\s), w(j_2) = w(\s)+1,\dots,w(j_r) = w(\s) + (w(\s')-w(\s)+1) -1 = w(\s')$.
By the definition of $\mathcal G_h^k$,
\begin{equation}\label{eq_lemma71}
j_x \to j_{x+1} \text{ is an edge in }
G_{w,h} \quad \text{ if } j_{x+1} > j_x.
\end{equation}
Using this property, we will show that $j_x >\s'$ for any $1 \leq x \leq r-1$ varying $x$ from $r-1$ to~$1$ inductively.

For $x = r-1$, since $\s'=j_r$ is a source and $w(j_{r-1})<w(j_r)$, there is no edge in $G_h$ connecting $j_{r-1}$ and $s'$. In particular, by~\eqref{eq_lemma71}, we have $j_{r-1} > \s'$. If $j_{x+1} >\s'$ and $j_{x}<\s'$ for some $1 \le x < r-1$, then by~\eqref{eq_lemma71}, $j_x$ and $j_{x+1}$ is connected by an edge in $G_h$, and so is $j_x$ and $\s'$  because of  $j_x <\s' <j_{x+1}$, a contradiction. Therefore, $j_x >\s'$ for any $1 \leq x \leq r-1$. Consequently, we have $\s >\s'$.
\end{proof}

\begin{definition}\label{def_widehat_swh}
   For $w \in \mathcal G_{h}^k$, we define an element $\widehat{\sigma}_{w,h}$ in $M(\sigma_{w,h})$ as follows.
\begin{enumerate}
\item  Let $\{\s_1> \dots>\s_{\ell +1 }\}$ be the source set
  of the acyclic orientation $o_h(w)$ associated with $w$.    Define $K_i\colonequals  \{w(\s_{i } ), w(\s_i)+1, \dots, w(\s_{i+1 })-1 \}$ for $1 \leq i \leq \ell $. and $K_{\ell+1}\colonequals  \{w(\s_{\ell+1 }), \dots, n\}$.
Denote by     $\mathfrak S_{{\bf K}_{w,h}}$ the Young subgroup  $\mathfrak S_{K_1} \times \dots \times \mathfrak S_{K_{\ell+1}}$.

\item  Define an element $\widehat{\sigma}_{w,h} $ in $ M(\sigma_{w,h})$ by
\[
\widehat{\sigma}_{w,h} \colonequals  \sum_{u \in \mathfrak S_{{\bf K}_{w,h}} } u \cdot \sigma_{w,h}.
\]
\end{enumerate}
\end{definition}

\begin{conjecture}\label{conj:permutation module decomposition_concluding remark}
 For any $w \in \mathcal G_h$,
 the $\mathfrak S_n$-module $M(\widehat{\sigma}_{w,h})$ generated by $\widehat{\sigma}_{w,h}$ is a permutation module and
\[
H^{2k}(\Hess(S,h)) = \bigoplus_{w \in \mathcal G_h^k} M(\widehat{\sigma}_{w,h}).
\]
\end{conjecture}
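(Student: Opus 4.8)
The plan is to run, for general $k$, the dimension--squeeze that proves Theorem~\ref{thm_H2} in the case $k=1$. Set $M \colonequals \sum_{w \in \mathcal G_h^k} M(\widehat{\sigma}_{w,h}) \subseteq H^{2k}(\Hess(S,h))$, and for each $w$ let $\lambda_w$ be the partition whose parts are the block sizes $|K_1|, \dots, |K_{\ell+1}|$ of ${\bf K}_{w,h}$ from Definition~\ref{def_widehat_swh}. Because $\widehat{\sigma}_{w,h} = \sum_{u \in \mathfrak{S}_{{\bf K}_{w,h}}} u \cdot \sigma_{w,h}$ is a sum over the group $\mathfrak{S}_{{\bf K}_{w,h}}$, left multiplication by any element of $\mathfrak{S}_{{\bf K}_{w,h}}$ only permutes the summands; hence $\mathfrak{S}_{{\bf K}_{w,h}} \subseteq \Stab(\widehat{\sigma}_{w,h})$, and the orbit--stabilizer theorem yields $\dim_{\C} M(\widehat{\sigma}_{w,h}) \le [\mathfrak{S}_n : \mathfrak{S}_{{\bf K}_{w,h}}] = \dim_{\C} M^{\lambda_w}$. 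Summing gives $\dim_{\C} M \le \sum_{w} \dim_{\C} M^{\lambda_w}$. The strategy is then to prove two things: (a) the combinatorial identity $\sum_{w \in \mathcal G_h^k} \dim_{\C} M^{\lambda_w} = \dim_{\C} H^{2k}(\Hess(S,h))$, and (b) the spanning statement $M = H^{2k}(\Hess(S,h))$. Once both hold, every inequality above collapses to an equality, which simultaneously forces the sum defining $M$ to be direct, forces $\Stab(\widehat{\sigma}_{w,h}) = \mathfrak{S}_{{\bf K}_{w,h}}$, and forces the orbit of $\widehat{\sigma}_{w,h}$ to be linearly independent; the last two give $M(\widehat{\sigma}_{w,h}) \cong M^{\lambda_w}$, a genuine permutation module. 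This is exactly the package of conclusions claimed by the conjecture.

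For (a) I would give a direct combinatorial proof, generalizing Proposition~\ref{prop_dimension}, rather than quoting Theorem~\ref{thm:Brosnan-Chow}: the latter route would require knowing in advance that the multiset $\{\lambda_w\}$ reproduces the $e$-expansion coefficients of $X_{G_h}$, which is tantamount to the $e$-positivity we are trying to establish, so it is circular. Instead, the quantity $\dim_{\C} H^{2k}$ is already known to equal $|\{u \in \mathfrak{S}_n : \ell_h(u) = k\}| = \sum_{w \in \mathcal G_h^k} |P_{w,h}|$ by Proposition~\ref{prop_basics} and Proposition~\ref{prop:type and orbit}, so (a) becomes the self-contained identity $\sum_{w} \binom{n}{|K_1|, \dots, |K_{\ell+1}|} = \sum_{w} |P_{w,h}|$. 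This is not a termwise equality; the blocks ${\bf K}_{w,h}$ are read off from the source set of $o_h(w)$, which is precisely why the source-set lemma just proved (guaranteeing $w(\s_1) < \dots < w(\s_{\ell+1})$, so that $K_1, \dots, K_{\ell+1}$ are consecutive and partition $[n]$) is needed to make ${\bf K}_{w,h}$, and hence $\lambda_w$, well defined. I would establish the identity by grouping indices according to the block structure of $T = T_h$, exactly as in the proof of Proposition~\ref{prop_dimension}.

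For (b) I would induct on the ordered set $T = \{t_1 < \dots < t_s\}$ and argue modulo $M$, mirroring Cases~2-1 and~2-2 in the proof of Theorem~\ref{thm_H2}. The base of the induction collects the generators for which $\widehat{\sigma}_{w,h}$ is, up to a scalar, $\sigma_{w,h}$ itself---those whose $\sigma_{w,h}$ is already $\mathfrak{S}_{{\bf K}_{w,h}}$-invariant---since Proposition~\ref{prop:type and orbit} then places every $\sigma_{u,h}$ with $u \in P_{w,h}$ directly into $M$. For the remaining, ``trivial-type'' generators the symmetrization $\sum_{u} u \cdot \sigma_{w,h}$ is essential: I would expand $s_i \cdot \sigma_{v,h}$ using Proposition~\ref{prop:simple action}(3), show via an analogue of Lemma~\ref{lemma_Du_property} that all correction terms $\tau_u$ land in graph-type classes $P_\ell$ already absorbed by earlier stages of the induction, and thereby reduce $\widehat{\sigma}_{w,h}$ modulo $M$ to a nonzero multiple of the as-yet-unaccounted basis classes, pulling them into $M$ one block of $T$ at a time until all $\sigma_{u,h}$ with $\ell_h(u) = k$ are exhausted.

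The hard part will be step (b) when $k \ge 2$. In the $k=1$ proof the correction sets $\mathcal A_i = \mathcal A_{s_i,s_i}$ collapsed because $s_i s_i = e$ makes $\mathcal T_u = \Omega_{u,h}$ by~\eqref{eq_Tu}, so the right-hand side of Proposition~\ref{prop:simple action}(3) was a signed sum of honest BB basis elements with explicitly controlled descent sets (Proposition~\ref{proposition_Ai}, Lemma~\ref{lemma_Du_property}). For a general $w$ with $\ell_h(w) = k$ the intersections $\Omega_u^{\circ} \cap \Omega_{s_iw,h}$ need not be single minus cells, so the classes $\tau_u$ are not manifestly BB basis elements, and the clean dichotomy ``$\mathcal A_{s_i,w} \cap P_\ell = \emptyset$ for $\ell \in T$, $\ell \ge i$'' must be replaced by a genuinely finer control of which graph types the correction terms can reach. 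Obtaining that control, together with proving the generalized counting identity (a) in closed form, is where essentially all the difficulty resides; the representation-theoretic squeeze of the first paragraph is then purely formal.
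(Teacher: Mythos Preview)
The statement you are addressing is Conjecture~\ref{conj:permutation module decomposition_concluding remark}, which the paper presents as an open problem; there is no proof in the paper to compare your proposal against. What you have written is not a proof but a proof \emph{strategy}, and you yourself flag the essential gaps in the final paragraph. Let me make those gaps more precise.

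Step~(a) is already a serious obstacle. For $k=1$ the identity $\sum_i \dim M^{\alpha^i} = \dim H^2$ is obtained in Proposition~\ref{prop_dimension} only after an explicit, case-by-case enumeration of every $P_i$ (Proposition~\ref{prop_elements_in_Pi}); the ``grouping by blocks of $T$'' works because each $P_i$ has a closed-form cardinality. For general $k$ there is no analogous description of $P_{w,h}$, and the bare identity $\sum_{w \in \mathcal G_h^k} \binom{n}{|K_1|,\dots,|K_{\ell+1}|} = |\{u : \ell_h(u)=k\}|$ is, to my knowledge, not known. In fact, proving it would already be a strong combinatorial refinement: via Proposition~\ref{prop:generator and acyclic orientation} it predicts, for every acyclic orientation of $G_h$, the ``correct'' partition $\lambda_w$ indexing a summand of the conjectural $h$-expansion, which is essentially the combinatorial content of Stanley--Stembridge. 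Your plan to avoid circularity by not invoking Theorem~\ref{thm:Brosnan-Chow} is sound, but what replaces it is not supplied.

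Step~(b) is the heart of the conjecture and your outline does not advance it. The $k=1$ argument hinges on two special features you correctly isolate: the collapse $\mathcal T_u = \Omega_{u,h}$ in~\eqref{eq_Tu}, which makes every correction term an honest BB class, and the descent control of Lemma~\ref{lemma_Du_property}, which forces those classes into earlier $P_\ell$. For $k \ge 2$ neither holds: the $\tau_u$ are classes of closures of intersections $\Omega_u^\circ \cap \Omega_{s_iw,h}$ that need not be minus cells, and no replacement for Lemma~\ref{lemma_Du_property} is proposed. The example after Question~\ref{question} ($h=(2,4,4,4)$, $w=3412$) already shows that $M(\sigma_{w,h})$ can be a permutation module of a \emph{different} type than $\Stab(\sigma_{w,h})$ suggests, so the ``base case'' of your induction (generators with $\widehat\sigma_{w,h}$ a scalar multiple of $\sigma_{w,h}$) need not behave as cleanly as the $i\notin T$ cases do for $k=1$. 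In short, the representation-theoretic squeeze of your first paragraph is correct and is exactly how the paper closes the $k=1$ case, but both inputs (a) and (b) remain genuinely open for $k\ge 2$.
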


We remark that $\widehat{\sigma}_{w,h}$ in Definition~\ref{def_widehat_swh} is a constant multiple of the class we considered when $\Hess(S,h)$ is a permutohedral variety (see ~\cite{CHL}) or when $k=1$ (see Definition \ref{def_hat_wi}). 


\providecommand{\bysame}{\leavevmode\hbox to3em{\hrulefill}\thinspace}
\providecommand{\MR}{\relax\ifhmode\unskip\space\fi MR }
\providecommand{\MRhref}[2]{%
  \href{http://www.ams.org/mathscinet-getitem?mr=#1}{#2}
}
\providecommand{\href}[2]{#2}

\end{document}